\numberwithin{equation}{section}
\theoremstyle{plain}
\newtheorem{thm}{Theorem}
  \theoremstyle{definition}
  \theoremstyle{remark}
  \newtheorem{rem}[thm]{Remark}
  \theoremstyle{plain}
  \newtheorem{prop}[thm]{Proposition}
  \theoremstyle{plain}
  \newtheorem{lem}[thm]{Lemma}
  \theoremstyle{plain}
  \newtheorem{cor}[thm]{Corollary}
 \theoremstyle{definition}
  \newtheorem{example}[thm]{Example}
  \theoremstyle{remark}
  \newtheorem*{rem*}{Remark}
  \theoremstyle{definition}
\newtheorem*{op*}{Open problem}
\theoremstyle{plain}
\newcommand{\N}{\mathbb{N}}
\newcommand{\R}{{\mathbb{R}}}
\newcommand{\C}{{\mathbb{C}}}
\newcommand{\Z}{{\mathbb{Z}}}
\newcommand{\D}{{\mathbb{D}}}
\newcommand{\T}{{\partial\mathbb{D}}}
\newcommand{\dd}{{\rm d}}
\newcommand{\ii}{{\rm i}}
\newcommand{\ee}{{\rm e}}
\newcommand{\calZ}{\mathcal{Z}}
\newcommand{\calP}{\mathcal{P}}
\newcommand{\diag}{\mathop\mathrm{diag}\nolimits}
\newcommand{\Ran}{\mathop\mathrm{Ran}\nolimits}
\renewcommand{\Re}{\mathop\mathrm{Re}\nolimits}
\newcommand{\supp}{\mathop\mathrm{supp}\nolimits}
\newcommand{\Tr}{\mathop\mathrm{Tr}\nolimits}
\newcommand{\dist}{\mathop\mathrm{dist}\nolimits}
\DeclareMathOperator*{\wlim}{w-lim\ }
\begin{document}

\title[]{Ratio asymptotics and zero density for orthogonal polynomials with varying Verblunsky coefficients}

\author{Rostyslav Kozhan}
\address[Rostyslav Kozhan]{
	Department of Mathematics, Uppsala University, Box 480,
	75106 Uppsala, Sweden
	}
\email{kozhan@math.uu.se}

\author{Franti\v sek \v Stampach}
\address[Franti{\v s}ek {\v S}tampach]{
	Department of Mathematics, Faculty of Nuclear Sciences and Physical Engineering, Czech Technical University in Prague, Trojanova~13, 12000 Praha, Czech Republic
	}	
\email{stampfra@cvut.cz}

\subjclass[2020]{42C05, 30C15}

\keywords{Asymptotic zero distribution, density of zeros, orthogonal polynomials on the unit circle, varying Verblunsky coefficients, ratio asymptotics}

\date{\today}

\begin{abstract}
We study asymptotic behavior of orthogonal polynomials on the unit circle with varying Verblunsky coefficients $\alpha_{n,N}$  when the ratio $n/N$ converges as $n,N\to\infty$.
First, we give a streamlined proof of ratio asymptotics for orthogonal and paraorthogonal polynomials in the case of asymptotically constant and asymptotically periodic coefficients $\alpha_{n,N}$.	Second, we determine the asymptotic zero distribution of paraorthogonal polynomials in the locally constant and locally periodic regimes. Analogous results are obtained for orthogonal polynomials under a mild additional condition on the varying coefficients.
\end{abstract}

\maketitle

\section{Introduction}

We investigate orthogonal polynomials $\Phi_{n,N}$ on the unit circle determined by Verblunsky coefficients $\alpha_{n,N}$ that are varying with an additional index $N\in\N$. In other words, given a doubly-indexed sequence $\alpha_{n,N}$ from the unit disk $\D\equiv\{z\in\C \mid |z|<1\}$, the monic polynomial $\Phi_{n,N}$ of degree $n$ is generated by the Szeg{\H o} recurrence
\[
\Phi_{n+1,N}(z)=z\Phi_{n,N}(z)-\bar{\alpha}_{n,N}\Phi_{n,N}^{*}(z), \quad n\ge 0,
\]
with the initial condition $\Phi_{0,N}(z)=1$ for all $N\in\N$. Here and throughout, the bar denotes complex conjugation and $\Phi_{n,N}^{*}(z)=z^{n}\,\overline{\Phi_{n,N}\left(1/\bar{z}\right)}$ is the reversed polynomial.
Given an additional parameter $\beta$ in the unit circle $\T\equiv\{z\in\C \mid |z|=1\}$, the closely related concept of paraorthogonal polynomials $\Phi_{n,N}^{(\beta)}$ is, for each $N\in\N$, defined by
\[
	\Phi^{(\beta)}_{n+1,N}(z)=z\Phi_{n,N}(z)-\bar{\beta}\Phi_{n,N}^{*}(z), \quad n\ge 0.
\]

In this article, we study the asymptotic behavior of the polynomials $\Phi_{n,N}$, $\Phi_{n,N}^{(\beta)}$, and of their zeros, in the regime where both $n$ and $N$ tend to infinity in such a way that the ratio $n/N$ converges to a positive constant $t$; we will simply write $n/N\to t$.
For brevity, we will use the common abbreviations OPUC and POPUC for orthogonal polynomials on the unit circle
and paraorthogonal polynomials on the unit circle, respectively, throughout the paper.

\subsection{Informal summary of main results}

Let us first explain the nature of our main results without introducing all necessary notation and technical points.

In our first result (Theorem~\ref{thm:ratio}), we establish the ratio asymptotics for polynomials $\Phi_{n,N}$, $\Phi^*_{n,N}$, $\Phi_{n,N}^{(\beta)}$, as $n/N\to t$, assuming the varying Nevai--L\'{o}pez condition
\begin{equation}\label{eq:Verblunsky}
	\lim_{n/N\to t}\alpha_{n,N}=\alpha.
\end{equation}
While ratio asymptotic formulas are well known in the non-varying case ($\alpha_{n,N}\equiv \alpha_n$), see~\cite{BarLop,Khr02} and~\cite{simon_opuc1,simon_opuc2}, making the existence of such results expected, our approach via the operator theory of CMV matrices offers a streamlined proof that both extends these results to the varying case and provides a fresh insight into the classical setting.
In particular, it allows for an explicit identification of the limiting ratio in terms of standard objects of OPUC theory such as the Schur, Carathéodory, and $m$-functions, and it establishes uniform convergence on optimal domains, i.e., on arbitrary compact subsets of $\C$ which avoid the limiting zero set of the polynomials.

Another advantage of our approach is in its broader applicability. It allows to extend the first result, without any significant additional difficulty, to much wider generality, which we demonstrate on the asymptotically periodic setting when~\eqref{eq:Verblunsky} is replaced by the condition
\begin{equation}\label{eq:almostPeriodicIntro}
	\lim_{kp/N\to t}\alpha_{kp+j,N}=\alpha_{j}, \quad \mbox{for all }  j\in\{0,\dots,p-1\},
\end{equation}
where $p\in\N$ is a period. This constitutes our second result (Theorem~\ref{thm:ratioPeriodic}).
In the non-varying case, it improves upon a result of Barrios and L\'{o}pez~\cite{BarLop}, see Corollary~\ref{cor:BarLop}. 

Our next results concern the asymptotics of the zero-counting measures 
\begin{equation}
	\nu_{n,N}=\frac{1}{n}\sum_{j=1}^{n}\delta_{z_{j,n,N}},
\quad\mbox{ and }\quad
	\nu^{(\beta)}_{n,N}=\frac{1}{n}\sum_{j=1}^{n}\delta_{z^{(\beta)}_{j,n,N}},
\label{eq:count-meas_var}
\end{equation}
where $z_{j,n,N}$ and $z^{(\beta)}_{j,n,N}$ are zeros (counted with multiplicity) of $\Phi_{n,N}$ and $\Phi_{n,N}^{(\beta)}$, respectively, and $\delta_{z}$ is the Dirac unit mass measure supported at the one-point set~$\{z\}$.

Clearly, the mere conditions~\eqref{eq:Verblunsky} or~\eqref{eq:almostPeriodicIntro} are generally insufficient to guarantee the existence of weak limits for these measures, as they constrain only a small portion of the Verblunsky coefficients. This leads us to the so-called asymptotically locally constant setting, in which we assume that
\begin{equation}\label{eq:VerblunskyStrong}
	\lim_{n/N\to s}\alpha_{n,N}=\alpha(s), \quad \mbox{for a.e. } s\in [0,t],
\end{equation}
where $\alpha$ is an integrable function on the interval $[0,t]$. A model situation for this set-up is obtained by sampling a piece-wise continuous function $\alpha$ on a regular grid of $[0,t]$:
$\alpha_{n,N}=\alpha\left(n/N\right)$ for $n\leq tN$ (and, for instance, $\alpha_{n,N}=0$ for $n>tN$).

Our third result (Theorem~\ref{thm:popuc_asympt_distr}) states that, under condition~\eqref{eq:VerblunskyStrong}, the weak limit of the zero-counting measure $\nu^{(\beta)}_{n,N}$ of POPUC exists and equals an average measure:
\begin{equation}\label{eq:weak_limitIntro}
	\wlim_{n/N\to t} \nu_{n,N}^{(\beta)}=\frac{1}{t}\int_{0}^{t}\nu_{|\alpha(s)|}\dd s.
\end{equation}
Here the measure $\nu_{|\alpha(s)|}$ is the equilibrium measure of a circular arc determined by $|\alpha(s)|$.
If we additionally assume that $|\alpha_{n-1,N}|^{1/n}\to 1$ as $n/N\to t$, then the zero-counting measure $\nu_{n,N}$ of OPUC has the same weak limit as $n/N\to t$. This is our fourth result (Theorem~\ref{thm:opuc_asympt_distr}).

Finally, analogous results (Theorem~\ref{thm:asympt_distr_popuc_periodic} and Remark~\ref{rem:asymptPerOPUC}) hold in the asymptotically locally periodic setting, which assumes
\[
	\lim_{kp/N\to s} \alpha_{kp+j,N}=\alpha_{j}(s), \,\mbox{ for {a.e.} } s\in[0,t],
\]
where $\alpha_1,\ldots,\alpha_p$ are $p$ integrable functions on $[0,t]$. In this case, the measure $\nu_{|\alpha(s)|}$ from~\eqref{eq:weak_limitIntro} is replaced by the equilibrium measure of a certain finite-gap subset of $\T$.

\subsection{Relevant literature}

We benefit greatly from the comprehensive treatment of the general theory of OPUC presented in Simon's monographs~\cite{simon_opuc1, simon_opuc2}.

There are numerous works that address properties of the zeros of orthogonal and paraorthogonal polynomials on the unit circle, see, e.g., \cite{ABMV,AlfVig,BarLop,BlaSafSim,BreSee,Cas19,CasPet,DavSim,DenSim,ErdTur,golinski_maa99,Gol02,Khr03,KK,KilNen,KilSto,Siman16,Siman20,SimZero4,MFMS06a,MFMS06b,mha-saf_jat90,NevTot,Pak,SafTot,SimZero2,simon_opuc1,simon_opuc2,SimZero1,SimZero3,SimZeroRev,SimPOPUC,Sim09,SimTot,Sto,Won07}. The literature on asymptotics of orthogonal polynomials on the unit circle is so extensive that we do not attempt to survey it here. The results most relevant for our purposes, and on which we rely, are those in ~\cite{BarLop,golinski_maa99,Khr02,mha-saf_jat90,Sim09} and \cite[Sec. 8]{simon_opuc1}.

A parallel study formulated for orthogonal polynomials on the real line with varying Jacobi parameters has already received significant attention. Their asymptotic zero distribution was analyzed by Kuijlaars and Van Assche~\cite{kuijlaarsvanassche_jat99}. Related results had already appeared in the Szeg{\H o}-type limit formulas for Hermitian variable Toeplitz matrices due to Kac, Murdock and Szeg{\H o}~\cite{kms_jrma53}, later revisited by Tilli~\cite{tilli_laa98}; see also~\cite{bourget-etal_jjm18, pastur_mps96}. These works also influenced the development of locally Toeplitz sequences and their applications in numerical analysis~\cite{bar-ser_20,garoni-serracapizzano_vol1,garoni-serracapizzano_vol2}.

The article~\cite{kuijlaarsvanassche_jat99} had a significant impact on the orthogonal polynomial community, leading to several generalizations and extensions: to the asymptotically periodic case in~\cite{vanassche_ss99, fas-ser_03}, to discontinuous limiting functions in~\cite{kuijlaars-serracapizzano_jat01}, to Laurent orthogonal polynomials with varying recurrence coefficients in~\cite{coussement-vanassche_jpa05}, and to variable multiple orthogonal polynomials in~\cite{coussement-etal_tams08}. Our results cover the full range of~\cite{kuijlaarsvanassche_jat99,vanassche_ss99,kuijlaars-serracapizzano_jat01} in the unit-circle setting, including the analysis of both asymptotically periodic behavior and discontinuous sampling functions.

Finally, we note that the condition~\eqref{eq:VerblunskyStrong}
appears to be very natural from the perspective of discretizing Krein systems and one-dimensional Dirac operators~\cite{denisov_imrs06,Kre55}. The problem of understanding asymptotic zero densities in the varying setting is also closely related to questions in random matrix theory concerning global eigenvalue distribution of orthogonal polynomial ensembles on the unit circle, see, e.g.,~\cite{BreOfn25,DuiKoz}.

\subsection{Structure of the paper}

The paper is organized as follows.
Section~\ref{sec:prelim} collects notation and the necessary background from the OPUC theory and potential theory.

Section~\ref{sec:ratio_asym} derives ratio asymptotics for varying POPUC and OPUC. The asymptotically constant case is treated in Section~\ref{subsec:RatioConstant}, and the asymptotically periodic case in Section~\ref{subsec:RatioPeriodic}.

Section~\ref{sec:var_popuc} establishes the asymptotic zero distribution for POPUC with varying Verblunsky coefficients. We present two distinct proofs of the main result: one combining ratio asymptotics with a potential-theoretic argument in Section~\ref{sec:FirstProof}, and the other based on the method of moments in Section~\ref{sec:SecondProof}. The asymptotically periodic case is discussed in Section~\ref{subsec:distr_asympt_popuc_periodic}.

Section~\ref{sec:var_opuc} is devoted to the asymptotic zero distribution for OPUC with varying Verblunsky coefficients. A transition from POPUC to OPUC is handled via balayage in Section~\ref{subsec:balay}, and the main result is deduced in Section~\ref{subsec:distr_asympt_opuc}.

Finally, Section~\ref{sec:examples} presents several illustrative examples where the limiting densities are computed explicitly or expressed in terms of classical special functions, complete with numerical plots.

\section{Preliminaries}\label{sec:prelim}

This section reviews standard facts from the theory of orthogonal polynomials on the unit circle and potential theory that will be used later. The monographs \cite{simon_opuc1, simon_opuc2} serve as main references.

\subsection{Notation}

Throughout the paper we use the following notation: 
\begin{itemize}
\item $\N=\{1,2,3,\dots\}$ for positive integers;
\item $\N_0=\{0,1,2,\dots\}$ for non-negative integers;
\item $\D=\{z\in\C \mid |z|<1\}$ for the open unit disk;
\item $\overline{\D}=\{z\in\C \mid |z|\leq1\}$ for the closed unit disk;
\item $\T=\{z\in\C \mid |z|=1\}$ for the unit circle;
\item $\overline{\C}=\C\cup\{\infty\}$ for the Riemann sphere;
\item $\supp \mu$ for the support of a measure $\mu$;
\item $\dist(z,A)$ for the Euclidean distance of a point $z$ and a set $A$ in $\C$.
\item $\D^{\infty}=\{\{\alpha_{n}\}_{n=0}^{\infty} \mid \alpha_{n}\in\D \mbox{ for all } n\in\N_{0} \}$ for the set of sequences with ele\-ments from~$\D$;
\item $\ell^{2}(\N_{0})=\{\{x_{n}\}_{n=0}^{\infty} \mid \sum_{n=0}^{\infty}|x_{n}|^{2}<\infty\}$ for the Hilbert space of complex square summable sequences; 
\item $\langle\cdot,\cdot\rangle$ for the inner product on $\ell^{2}(\N_{0})$, linear in the second argument;
\item $e_{i}$ for the $i$-th vector of the standard basis of either $\C^{n}$ or $\ell^{2}(\N_{0})$ which is always clear from the context, i.e., $(e_{i})_{i}=1$ and $(e_{i})_{j}=0$ for $j\neq i$.
\end{itemize}
Other notation will be introduced by its first occurrence.


Concerning the doubly indexed sequences and limit relations, we adopt the notation used in~\cite{kuijlaarsvanassche_jat99}. This means that, for a doubly indexed sequence $X_{n,N}$ and $t>0$, we write
\[
\lim_{n/N\to t}X_{n,N}=X
\]
if and only if
\[
\lim_{j\to\infty}X_{n_j,N_j}=X,
\]
for any 
\begin{equation}
		 \{n_j\}_{j\in\N}, \{N_j\}_{j\in\N}\subseteq\N,
		\mbox{ such that }
		 n_j, N_j\to\infty \mbox{ and } n_j/N_j\to t, \mbox{ as } j\to\infty.
	\label{eq:subs}
\end{equation}
Whenever necessary, the precise meaning of the limit is also indicated. For example,
\[
\wlim_{n/N\to t}\mu_{n,N}={\mu}
\]
expresses the limit of the doubly indexed sequence of measures $\mu_{n,N}$ converging to $\mu$ weakly, i.e., in the weak$^{\star}$ topology. 

\subsection{Orthogonal and paraorthogonal polynomials}

Given a non-trivial probability measure $\mu$ on $\T$, i.e., one with infinite support, let $\Phi_n$ be the unique monic polynomial of degree $n$ satisfying the orthogonality relation
\[
	\int_{0}^{2\pi} \Phi_n(\ee^{\ii\theta}) \ee^{-\ii k \theta} \,\dd\mu(\ee^{\ii\theta}) = 0, \quad \mbox{for all } k=0,\ldots,n-1.
\]
These OPUC satisfy the Szeg\H{o} recursion relations:
\begin{align}
	\Phi_{n+1}(z)& =z\Phi_{n}(z)-\bar{\alpha}_{n}\Phi_{n}^{*}(z), \quad n\in\N_{0},
	\label{eq:recur_opuc}
	\\
	\Phi_{n+1}^{*}(z)& =\Phi_{n}^{*}(z)-z\alpha_{n}\Phi_{n}(z), \quad n\in\N_{0},
	\label{eq:recur_opuc_star}
\end{align}
where $\Phi_{n}^{*}(z)=z^{n}\overline{\Phi_{n}(1/\bar{z})}$, and $\{\alpha_{n}\}_{n=0}^{\infty}\subset\D^{\infty}$ are the Verblunsky coefficients.
The Verblunsky Theorem states that there is a one-to-one correspondence between all non-trivial probability measures on $\T$  and all sequences $\{\alpha_{n}\}_{n=0}^{\infty}\subset\D^{\infty}$.
The zeros of $\Phi_{n}$ are known to be located in~$\D$ for any non-trivial $\mu$ on $\T$.

If the Verblunsky coefficient $\alpha_{n-1}$ is replaced by a unimodular parameter $\beta\in \T$,
one obtains the POPUC:
\begin{equation}\label{eq:POPUC}
	\Phi^{(\beta)}_{n}(z)=z\Phi_{n-1}(z)-\bar{\beta}\Phi_{n-1}^{*}(z).
\end{equation}
A crucial property for our analysis is that all zeros of POPUC lie on~$\T$ and are simple.

\subsection{CMV matrices}\label{subsec:CMV}

From the spectral theory point of view, the probability measure $\mu$ associated with the sequence of Verblunsky coefficients $\{\alpha_{n}\}_{n=0}^{\infty}\subset\D^{\infty}$ is the spectral measure with respect to the cyclic vector $e_0$ of the unitary operator on $\ell^{2}(\N_{0})$, referred to as the CMV matrix:
\begin{equation}\label{eq:CMV}
	\mathcal{C}=\begin{pmatrix} \bar{\alpha}_{0} & \bar{\alpha}_{1}\rho_{0} & \rho_{1}\rho_{0} & 0 & 0 & \dots\\
		\rho_{0} & -\bar{\alpha}_{1}\alpha_{0} & -\rho_{1}\alpha_{0} & 0 & 0 & \dots\\
		0 & \bar{\alpha}_{2}\rho_{1} & -\bar{\alpha}_{2}\alpha_{1} & \bar{\alpha}_{3}\rho_{2} & \rho_{3}\rho_{2} & \dots\\
		0 & \rho_{2}\rho_{1} & -\rho_{2}\alpha_{1} & -\bar{\alpha}_{3}\alpha_{2} & -\rho_{3}\alpha_{2} & \dots\\
		0 & 0 & 0 & \bar{\alpha}_{4}\rho_{3} & -\bar{\alpha}_{4}\alpha_{3} & \dots\\
		\vdots & \vdots & \vdots & \vdots & \vdots & \ddots
	\end{pmatrix},
\end{equation}
where $\rho_{n}=\sqrt{1-|\alpha_{n}|^{2}}$. 

Given a CMV matrix $\mathcal{C}$ associated with Verblunsky coefficients $\{\alpha_n\}_{n=0}^\infty$, the once-stripped CMV matrix, denoted by $\mathcal{C}^{(1)}$, is the CMV matrix constructed from the shifted sequence of Verblunsky coefficients $\{\alpha_n\}_{n=1}^\infty$; that is, the sequence obtained by removing $\alpha_0$ and reindexing. Similar definition applies to $\mathcal{C}^{(j)}$, which is the CMV matrix constructed from $\{\alpha_n\}_{n=j}^\infty$, for any $j\in\N_0$.

Let us also introduce the cut-off CMV matrix $\mathcal{C}_{n}$,
which is the top-left $n\times n$ submatrix of matrix~\eqref{eq:CMV}. 
It is easy to show that $\Phi_{n}$ is the characteristic polynomial of~$\mathcal{C}_{n}$, i.e.
\begin{equation}\label{eq:charact}
	\Phi_{n}(z)=\det(z-\mathcal{C}_{n}), \quad n\in\N.
\end{equation}

An inspection of the structure of the CMV matrix shows that $\mathcal{C}_{n}$ depends only on the first $n$ Verblunsky coefficients. When needed, we will indicate this dependence  by writing
\[
\mathcal{C}_{n}=\mathcal{C}_{n}(\alpha_{0},\dots,\alpha_{n-1}).
\]
The POPUC~\eqref{eq:POPUC} therefore satisfy
\begin{equation}\label{eq:POPUCdef}
	\Phi_{n}^{(\beta)}(z)=\det\big(z-\mathcal{C}_{n}^{(\beta)}\big),
\end{equation}
where, for convenience, we abbreviate
\begin{equation}
	\mathcal{C}_{n}^{(\beta)}=\mathcal{C}_{n}(\alpha_{0},\dots,\alpha_{n-2},\beta).
\label{eq:CMV_beta_matrix_abbrev}
\end{equation}
Note that, unlike $\mathcal{C}_{n}$, the matrix $\mathcal{C}_{n}^{(\beta)}$ is unitary for any $\beta\in\T$, which explains why the zeros of POPUC are located on~$\T$. Moreover, all the zeros are simple.

\subsection{Carath{\' e}odory, Schur, and $m$-functions}

For a probability measure $\mu$ on $\T$, define its Carath{\' e}odory function by
\begin{equation}\label{eq:Cara}
	F(z)=\int_{0}^{2\pi}\frac{\ee^{\ii\theta}+z}{\ee^{\ii\theta}-z}\dd{\mu(\ee^{\ii\theta})} =
	\langle e_{0},(\mathcal{C}+z)(\mathcal{C}-z)^{-1}e_{0}\rangle , \quad z\in\D.
\end{equation}
This is an analytic function on $\D$ with $\Re F>0$ and $F(0)=1$. Using~\eqref{eq:Cara}, one may also define $F$ on $\C\setminus\supp\,\mu$, in which case it is straightforward to verify that
\begin{equation}\label{eq:Cara_symmetry}
	\overline{F(\bar{z}^{-1})} = -F(z), \quad z\in \C\setminus\supp\,\mu.
\end{equation}
We stress that $F$ on $\C\setminus\overline\D$ need not coincide with the analytic continuation  of $F$ on $\D$ through $\T$ if $\supp\,\mu = \T$ (see Lemma~\ref{lem:m_and_G}\textit{(i)} below for an example).

The Schur function of $\mu$ is defined via
\begin{equation}\label{eq:Schur}
	f(z) = \frac{1}{z}\frac{F(z)-1}{F(z)+1}. 
\end{equation}
The case $z=0$  in the last identity (as well as in~\eqref{eq:mSymm}, \eqref{eq:mGeronimus}, and \eqref{eq:fGeronimus} below) is understood by taking limits. 

{The} Schur function is analytic $\D$  and satisfies 
$\sup_{z\in\D}|f(z)| \le 1$. It will be convenient for us to extend~\eqref{eq:Schur} (and so also \eqref{eq:m_Schur} below) to  $z\in \C\setminus\supp\,\mu$, which requires allowing $f(z)$ to take value $\infty$ at those points $z\in\C\setminus\overline\D$ where $F(z)=-1$. 
For instance, under this convention, for the free CMV matrix ($\alpha_n\equiv 0$) we obtain $f(z)=0$ for all $z\in\D$ and $f(z)=\infty$ for all $z\in\C\setminus\overline{\D}$, see Section~\ref{subsec:constant}.
From~\eqref{eq:Cara_symmetry}, we obtain the symmetry
\begin{equation}\label{eq:symSchur}
	\overline{f(\bar{z}^{-1})}  = \frac{1}{f(z)}, \quad z\in \C\setminus\supp\,\mu.
\end{equation}

Finally, we introduce a notation for the $(0,0)$-entry of the resolvent of $\mathcal{C}$:
\begin{equation}\label{eq:def_func_m}
	m(z)= \int_0^{2\pi}\frac{{\dd{\mu(\ee^{\ii\theta})}}}{z-\ee^{\ii\theta}}=\langle e_{0},(z-\mathcal{C})^{-1}e_{0}\rangle,
	\quad z\in\C\setminus\supp\,\mu.
\end{equation}

We summarize further properties and interrelations between the three introduced functions in the next lemma that will be needed later.

\begin{lem}\label{lem:m}\hfill
	\begin{enumerate}[(i)]
		\item  For all $z\in\C\setminus\supp\,\mu$, the following equalities hold:
		\begin{align}
			\label{eq:m_Cara}
			& F(z) = 1-2z m(z),
			\\
			\label{eq:m_Schur}
			&f(z) = \frac{m(z)}{zm(z)-1} =  -\frac{zm(z)}{\overline{m(\bar{z}^{-1})}},
			\\
			\label{eq:mSymm}
			&\overline{m(\bar{z}^{-1})}  = z(1-z m(z)).
		\end{align}
		
		\item Zero sets of $m(z)$ and $f(z)$ on $\C\setminus\supp\mu$ coincide. In particular, $m(z) \ne 0$  for all $z\in\C\setminus\overline\D$ and for $z\in\T\setminus\supp\mu$.
	\end{enumerate}
\end{lem}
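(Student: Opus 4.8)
The plan is to prove Lemma~\ref{lem:m} in the stated order, deriving part~\textit{(i)} directly from the definitions and using it to obtain part~\textit{(ii)}.

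\textbf{Proof of \textit{(i)}.} The identity \eqref{eq:m_Cara} is an immediate algebraic manipulation: from \eqref{eq:def_func_m} and \eqref{eq:Cara}, write
\[
\frac{\ee^{\ii\theta}+z}{\ee^{\ii\theta}-z}=-1+\frac{2\ee^{\ii\theta}}{\ee^{\ii\theta}-z}=-1+\frac{2\ee^{\ii\theta}-2z+2z}{\ee^{\ii\theta}-z}=1-\frac{2z}{z-\ee^{\ii\theta}},
\]
and integrate against $\dd\mu$ to get $F(z)=1-2zm(z)$ for $z\in\C\setminus\supp\mu$. (Equivalently, one can argue at the operator level using $(\mathcal C+z)(\mathcal C-z)^{-1}=-I+2\mathcal C(\mathcal C-z)^{-1}=I-2z(z-\mathcal C)^{-1}$.) Next, \eqref{eq:mSymm} follows by combining \eqref{eq:m_Cara} with the Carath\'eodory symmetry \eqref{eq:Cara_symmetry}: substituting $z\mapsto\bar z^{-1}$ in \eqref{eq:m_Cara} gives $F(\bar z^{-1})=1-2\bar z^{-1}m(\bar z^{-1})$, and taking complex conjugates together with $-F(z)=\overline{F(\bar z^{-1})}$ yields $-(1-2zm(z))=1-2z^{-1}\overline{m(\bar z^{-1})}$, which rearranges to $\overline{m(\bar z^{-1})}=z(1-zm(z))$. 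Finally, for \eqref{eq:m_Schur} plug \eqref{eq:m_Cara} into the definition \eqref{eq:Schur} of the Schur function:
\[
f(z)=\frac1z\cdot\frac{F(z)-1}{F(z)+1}=\frac1z\cdot\frac{-2zm(z)}{2-2zm(z)}=\frac{m(z)}{zm(z)-1},
\]
and the second expression for $f$ in \eqref{eq:m_Schur} is then obtained by rewriting the denominator via \eqref{eq:mSymm}: $zm(z)-1=-z^{-1}\overline{m(\bar z^{-1})}$, so $f(z)=m(z)\big/\big(-z^{-1}\overline{m(\bar z^{-1})}\big)=-zm(z)/\overline{m(\bar z^{-1})}$. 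The singular cases $z=0$ (and the cases where denominators vanish, so that $f$ or $F$ takes the value $\infty$) are handled by the limiting conventions already fixed in the text.

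\textbf{Proof of \textit{(ii)}.} From the first formula in \eqref{eq:m_Schur}, $f(z)=m(z)/(zm(z)-1)$, we see that whenever $zm(z)-1\neq0$ (i.e.\ $F(z)\neq-1$, the only points where $f=\infty$), the numerator $m(z)$ vanishes exactly when $f(z)$ vanishes. Conversely, if $f(z)=0$ then either $m(z)=0$, or $m(z)=\infty$; but $m$ is finite on $\C\setminus\supp\mu$ by \eqref{eq:def_func_m}, so $m(z)=0$. Hence the zero sets of $m$ and $f$ on $\C\setminus\supp\mu$ coincide. For the last assertion, note that $f$ is a Schur function, analytic on $\D$ with $\sup_{\D}|f|\le1$; by the symmetry \eqref{eq:symSchur}, $\overline{f(\bar z^{-1})}=1/f(z)$, so for $z\in\C\setminus\overline\D$ one has $|f(z)|=1/|f(\bar z^{-1})|\ge1$, and in fact (since $f(\bar z^{-1})$ is an interior value of the open unit disk for $z$ strictly outside $\overline\D$, using the maximum principle to exclude $|f|\equiv1$ in the nontrivial case, or the explicit convention for the free case) $f(z)\neq0$ there. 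Likewise, for $z\in\T\setminus\supp\mu$, $f$ extends analytically across such an arc and the symmetry \eqref{eq:symSchur} forces $|f(z)|=1/|f(z)|$, hence $|f(z)|=1\neq0$. Combining with the first part, $m(z)\neq0$ on $(\C\setminus\overline\D)\cup(\T\setminus\supp\mu)$.

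\textbf{Main obstacle.} The routine part is the algebra in \textit{(i)}; the only genuinely delicate point is the nonvanishing claim in \textit{(ii)} on $\T\setminus\supp\mu$ and on $\C\setminus\overline\D$. There one must be careful that $f$ (and $F$) have been \emph{defined} on $\C\setminus\supp\mu$ via the integral representations rather than by analytic continuation through $\T$, so that the naive ``reflection across $\T$'' of a Schur function must be justified using \eqref{eq:symSchur} together with the observation that $m$ given by \eqref{eq:def_func_m} is a genuine (finite, nonzero where $\mu$ is absent) Cauchy transform; I would make this rigorous by first checking $m(z)\to0$ as $z\to\infty$ and that $m$ has no zeros in a neighborhood of $\infty$, then propagating nonvanishing along components of $\C\setminus\supp\mu$ using \eqref{eq:mSymm} and the fact that $zm(z)=1$ is impossible for $z\in\C\setminus\overline\D$ (as this would force, via \eqref{eq:mSymm}, $\overline{m(\bar z^{-1})}=0$ with $\bar z^{-1}\in\D$, contradicting $\Re F>0$, i.e.\ $F(\bar z^{-1})\neq-1$ in $\D$).
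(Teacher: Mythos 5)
Your proof of part \textit{(i)} is a straightforward algebraic verification from the definitions, which is exactly what the paper does (it simply calls the three identities ``immediate from the definitions''). Your proof of part \textit{(ii)} is also essentially the paper's: use the first identity in \eqref{eq:m_Schur} to identify zero sets of $m$ and $f$, then use the Schur property of $f$ on $\D$ together with the symmetry \eqref{eq:symSchur} to conclude $f\neq 0$ on $\C\setminus\overline{\D}$ and on $\T\setminus\supp\mu$. This part of your write-up is correct.

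However, there is a genuine error in your closing ``Main obstacle'' paragraph, and it is worth flagging because it suggests a misconception. You claim that ``$zm(z)=1$ is impossible for $z\in\C\setminus\overline{\D}$'' and propose to prove it by noting that \eqref{eq:mSymm} would then force $\overline{m(\bar z^{-1})}=0$ for $\bar z^{-1}\in\D$, ``contradicting $\Re F>0$.'' Both the conclusion and the reasoning are wrong. The conclusion is false: for the free case $\alpha_n\equiv 0$, the paper records $m_0(z)=1/z$ for every $z\in\C\setminus\overline{\D}$, so $zm_0(z)=1$ identically there; correspondingly $f_0\equiv\infty$ on $\C\setminus\overline{\D}$, a possibility the paper explicitly builds into its convention for $f$. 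The reasoning is also flawed: $m(w)=0$ for $w\in\D$ is perfectly compatible with $\Re F>0$, since by \eqref{eq:m_Cara} it forces $F(w)=1$, not $F(w)=-1$. You appear to be conflating ``$m=0$'' with ``$F=-1$.'' Fortunately your main argument for \textit{(ii)} does not rely on this false claim --- you never need to rule out $f(z)=\infty$ outside $\overline\D$, only $f(z)=0$, and your symmetry argument does that correctly --- so the error is confined to the side commentary. But you should excise that paragraph, or replace it with the simple observation the paper uses: $f$ is finite on $\overline\D\setminus\supp\mu$ (on $\D$ because it is a Schur function, and on $\T\setminus\supp\mu$ because there $F$ is purely imaginary, hence $\neq -1$), and then \eqref{eq:symSchur} gives $1/f(z)=\overline{f(\bar z^{-1})}$ finite, so $f(z)\neq0$.
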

\begin{proof}
	Formulas \eqref{eq:m_Cara}--\eqref{eq:mSymm} are immediate from the definitions. Next, since $f$ does not take infinite values in $\overline\D\setminus\supp\mu$, it follows from~\eqref{eq:symSchur} that $f(z)\ne 0$ for $z\in\C\setminus\overline\D$ as well as for $z\in\T\setminus\supp\mu$. Combining this with the first equality in~\eqref{eq:m_Schur} implies claim {\it (ii)}.
\end{proof}

\subsection{Constant Verblunsky coefficients}\label{subsec:constant}

In this section, we assume that all the Verblunsky coefficients are constant, that is, $\alpha_n= \alpha$ for all $n\in\N_0$, with $\alpha\in\D$. 
The corresponding CMV operator~\eqref{eq:CMV} and the $n\times n$ cut-off CMV matrix will be denoted by $\mathcal{C}(\alpha)$ and $\mathcal{C}_n(\alpha)$, respectively.

In the so-called free case, when $\alpha=0$, we have $\Phi_n(z)=z^n$ and the corresponding measure of orthogonality is the Lebesgue measure on $\T$. For $\alpha\in\D\setminus\{0\}$, the orthogonal polynomials $\Phi_n$ are known as the Geronimus polynomials, see \cite{geronimus_vkgu66}, as well as \cite[Sec.~2]{golinski_maa99} or \cite[Ex.~1.6.12]{simon_opuc1}.

We will also allow  $\alpha\in\T$ in the CMV matrix~\eqref{eq:CMV}. In this case, 
\[
\mathcal{C}(\alpha)=\diag(\bar{\alpha},-1,-1,\dots),
\]
where $\diag$ denotes a diagonal matrix. The spectral measure $\mu$ of~$\mathcal{C}(\alpha)$ with respect to $e_0$ is the Dirac delta measure $\delta_{\bar\alpha}$. This measure is trivial, and therefore it does not uniquely determine its orthogonal polynomials. Nevertheless, all the functions discussed 
below are {unambiguously} defined and consistent with this setting $\alpha\in\T$,  $\mu = \delta_{\bar\alpha}$. 

Next we denote $a=|\alpha|\in [0,1]$, 
\begin{equation}
	\theta_{a}=2\arcsin a
	\label{eq:def_theta},
\end{equation}
and
\begin{equation}
	\label{eq:arc}
	\Gamma_a=\left\{\ee^{\ii\theta}\in \T \;\big|\; \theta_{a}\le \theta \le 2\pi-\theta_{a}\right\}. 
\end{equation}
Let $\Gamma_a^\circ$ be the interior of $\Gamma_a$ in the induced topology of $\T$, in particular $\Gamma_1^\circ = \emptyset$. 
It can be shown, see e.g., \cite[Thm.~1.6.13]{simon_opuc1} that the spectrum of  $\mathcal{C}(\alpha)$ is 
\begin{equation*}
	\sigma_\alpha =
	\begin{cases}
		\Gamma_a, & \mbox{ if } |\alpha+\tfrac12|\le \tfrac12,
		\\[2pt]
		\Gamma_a\cup\{\frac{1+\bar\alpha}{1+\alpha}\}, & \mbox{ if } |\alpha+\tfrac12| > \tfrac12. 
	\end{cases}
\end{equation*}
Note that this remains valid even if $\alpha\in\T$, since in that case {$(1+\bar\alpha)/(1+\alpha) = \bar\alpha$}.

Throughout this paper, we use the square root function
\begin{equation}\label{eq:sqrt}
	\sqrt{\left(z-\ee^{\ii\theta_{a}}\right)\left(z-\ee^{-\ii\theta_{a}}\right)} = \sqrt{(z-1)^{2}+4z a^{2}} ,
\end{equation}
which we take to be analytic on $\C\setminus\Gamma_a$ with a branch cut along $\Gamma_a^\circ$, and fixed by
\[
	\lim_{z\to0}\sqrt{\left(z-\ee^{\ii\theta_{a}}\right)\left(z-\ee^{-\ii\theta_{a}}\right)} = 1.
\]
If $a=0$, we choose~\eqref{eq:sqrt} to be interpreted as $1-z$ if $z\in\D$ and $z-1$ if $z\in\C\setminus\overline{\D}$. If $a=1$ then the expression~\eqref{eq:sqrt} is treated as $z+1$ without any branch cut.

With this choice of the square root, we define the function $G_a$, analytic on $\C\setminus \Gamma_a$,  by
\begin{equation} \label{eq:def_G}
	G_{a}(z)=\frac{1}{2}\left(z+1+\sqrt{\left(z-\ee^{\ii\theta_{a}}\right)\left(z-\ee^{-\ii\theta_{a}}\right)}\right).
\end{equation}
This function naturally appears as the limit of the ratio of orthogonal polynomials, see Theorem~\ref{thm:ratio} below, and is closely related to the logarithmic potential of the equilibrium measure of $\Gamma_{a}$, see~\eqref{eq:U_eq_log_G} below.

It is easy to verify that, for $0<a\le 1$, function $G_a$ satisfies
\begin{align}
	& \overline{G_a(\bar{z}^{-1})}   = z^{-1} G_a(z), \label{eq:GSymm} 
	\\
	& (z-G_a(z))(1-G_a(z)) = {a^{2}} z \label{eq:GProperty}
\end{align}
for $z\in\C\setminus\Gamma_a$.

We denote by $F_\alpha$, $f_\alpha$, and $m_\alpha$, the Carath\'{e}odory function~\eqref{eq:Cara}, the Schur function~\eqref{eq:Schur}, and the $m$-function~\eqref{eq:def_func_m}, respectively, in the case of constant coefficients $\alpha_n\equiv \alpha$. The next lemma  summarizes basic formulas for  $F_\alpha$, $f_\alpha$, and $m_\alpha$ in forms suitable for later purposes.

\begin{lem}\label{lem:m_and_G}\leavevmode
	\begin{enumerate}[(i)]
		\item For $\alpha=0$, we have 
		\begin{alignat*}{2}
			&	G_0(z)  =
			\begin{cases}
				1, & \mbox{ if } z\in\D, \\
				z, & \mbox{ if } z\in \C\setminus\overline\D,
			\end{cases}
			\qquad
			&F_0(z)=
			\begin{cases}
				1, & \mbox{ if } z\in\D, \\
				-1, & \mbox{ if } z\in \C\setminus\overline\D,
			\end{cases}
			\\
			& m_0(z)  =
			\begin{cases}
				0, & \mbox{ if } z\in\D, \\
				\frac1z, & \mbox{ if } z\in \C\setminus\overline\D,
			\end{cases}
			\qquad
			& f_0(z)  =
			\begin{cases}
				0, & \mbox{ if } z\in\D, \\
				\infty, & \mbox{ if } z\in \C\setminus\overline\D.
			\end{cases}
		\end{alignat*}
		\item For $\alpha\in\T$ and $z\in\C\setminus {\{\bar\alpha\}}$, we have
		$$
		G_1(z) = z+1, \quad 
		F_\alpha(z)={\frac{\bar\alpha+z}{\bar\alpha-z}},
		\quad
		m_\alpha(z) = \frac{1}{z-\bar\alpha},
		\quad
		f_\alpha(z) = \alpha.
		$$
		\item For $\alpha\in\D\setminus\{0\}$ and $z\in\C\setminus\sigma_\alpha$, we have
		\begin{align}
     		\label{eq:FGeronimus}
			F_\alpha(z) & 
			= 1+ \frac{2(G_a(z)+\alpha z -1)}{1+\bar\alpha-(1+\alpha)z},
			\\
			\label{eq:mGeronimus}
			m_\alpha(z) & =
			\frac{1}{z}\frac{G_a(z)+\alpha z -1}{(1+\alpha)z-(1+\bar\alpha)},
			\\
			\label{eq:fGeronimus}
			f_\alpha(z) & = \frac{G_a(z) -1}{\bar\alpha z}, 
		\end{align}
		where $G_a$ is given by~\eqref{eq:def_G}.
	\end{enumerate}
\end{lem}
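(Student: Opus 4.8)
The plan is to verify each of the three formulas in Lemma~\ref{lem:m_and_G} by direct computation, starting from the explicit data available for constant Verblunsky coefficients and using the algebraic relations \eqref{eq:m_Cara}--\eqref{eq:mSymm} together with the defining property \eqref{eq:GProperty} of $G_a$. Part~\textit{(i)} ($\alpha=0$) is the easiest: for the free case $\Phi_n(z)=z^n$, so $\mathcal{C}(0)$ is the free shift and $\mu$ is normalized Lebesgue measure on $\T$; then $m_0(z)=\int(z-\ee^{\ii\theta})^{-1}\frac{\dd\theta}{2\pi}$ equals $0$ for $z\in\D$ (the integrand is analytic inside) and $1/z$ for $z\in\C\setminus\overline{\D}$ (residue at infinity, or expand in $1/z$). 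The formulas for $F_0$ and $f_0$ then follow from \eqref{eq:m_Cara} and \eqref{eq:m_Schur}, and $G_0$ is read off from \eqref{eq:def_G} using the stated convention that the square root is $1-z$ on $\D$ and $z-1$ outside.

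For part~\textit{(ii)}, $\alpha\in\T$: here $\mathcal{C}(\alpha)=\diag(\bar\alpha,-1,-1,\dots)$ and the spectral measure with respect to $e_0$ is $\delta_{\bar\alpha}$, so $m_\alpha(z)=\langle e_0,(z-\mathcal{C}(\alpha))^{-1}e_0\rangle = (z-\bar\alpha)^{-1}$ directly. Then $F_\alpha(z)=1-2zm_\alpha(z)=1-\frac{2z}{z-\bar\alpha}=\frac{\bar\alpha+z}{\bar\alpha-z}$ by \eqref{eq:m_Cara}, and $f_\alpha(z)=\frac{1}{z}\frac{F_\alpha-1}{F_\alpha+1}$ simplifies to $\alpha$ after clearing denominators. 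The value $G_1(z)=z+1$ is immediate from \eqref{eq:def_G} since for $a=1$ the square root \eqref{eq:sqrt} is interpreted as $z+1$; no branch-cut subtlety arises.

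Part~\textit{(iii)} is the substantive one. The strategy is to start from the Geronimus polynomials with $\alpha\in\D\setminus\{0\}$ and the known closed form of their Carathéodory or $m$-function. One clean route: recall (e.g.\ from \cite[Ex.~1.6.12]{simon_opuc1} or \cite[Sec.~2]{golinski_maa99}) that the Schur function of constant Verblunsky coefficients is the constant solution of the Schur algorithm, i.e.\ $f_\alpha(z)$ is the fixed point $w$ of $w=\alpha+\bar\alpha\,z\,\frac{w-\alpha}{?}$---more robustly, use that for constant $\alpha$ the Schur function satisfies the self-similarity $f_\alpha(z)=\frac{\alpha+zf_\alpha(z)}{1+\bar\alpha z f_\alpha(z)}$, which rearranges to the quadratic $\bar\alpha z\,f_\alpha(z)^2+(1-z)f_\alpha(z)-\alpha=0$. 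Solving and matching $f_\alpha(0)=\alpha$ (the normalization forcing the correct branch) gives $f_\alpha(z)=\frac{(z-1)+\sqrt{(z-1)^2+4|\alpha|^2 z}}{2\bar\alpha z}=\frac{G_a(z)-1}{\bar\alpha z}$ using \eqref{eq:def_G} and \eqref{eq:sqrt}; one checks the branch by letting $z\to 0$ and confirming the limit is $\alpha$ rather than $\infty$. Then \eqref{eq:mGeronimus} follows from the first equality in \eqref{eq:m_Schur}, $m=\frac{f}{zf-1}$ — wait, rather $f=\frac{m}{zm-1}$, so invert: $m_\alpha=\frac{f_\alpha}{zf_\alpha-1}$; substituting $f_\alpha=\frac{G_a-1}{\bar\alpha z}$ and simplifying the denominator $zf_\alpha-1=\frac{G_a-1-\bar\alpha z}{\bar\alpha z}$, then using the identity \eqref{eq:GProperty} to rewrite, yields $m_\alpha(z)=\frac{1}{z}\frac{G_a(z)+\alpha z-1}{(1+\alpha)z-(1+\bar\alpha)}$; indeed \eqref{eq:GProperty} gives $(z-G_a)(1-G_a)=a^2 z$, equivalently $G_a^2-(z+1)G_a+z=a^2 z$, which is exactly what is needed to clear the square roots. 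Finally \eqref{eq:FGeronimus} follows from \eqref{eq:m_Cara}, $F_\alpha=1-2zm_\alpha$.

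The main obstacle I anticipate is bookkeeping of branches and the exceptional point $z=\bar\alpha$ (a mass point of $\mu$ when $|\alpha+\tfrac12|>\tfrac12$, which is why part~\textit{(iii)} excludes $\sigma_\alpha$ and part~\textit{(ii)} excludes $\{\bar\alpha\}$): one must confirm that the closed-form expressions on the right-hand sides are analytic exactly on $\C\setminus\sigma_\alpha$ and that the square-root branch fixed by $\lim_{z\to0}\sqrt{\cdot}=1$ is consistent with $f_\alpha(0)=\alpha$, $F_\alpha(0)=1$, $F_0=1$ on $\D$. A secondary point is checking that the denominators $(1+\alpha)z-(1+\bar\alpha)$ vanish precisely at the possible mass point $z=\frac{1+\bar\alpha}{1+\alpha}$, and that the apparent singularity there is genuine exactly when that point lies outside $\Gamma_a$ — but since the statement only claims the formulas for $z\in\C\setminus\sigma_\alpha$, it suffices to verify the algebraic identities there, which reduces everything to \eqref{eq:GProperty} and the quadratic for $f_\alpha$.
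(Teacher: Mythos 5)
Your argument is correct, and for part \textit{(iii)} it takes a genuinely different route from the paper. The paper's proof of \textit{(iii)} is essentially by citation: it points to Golinskii~\cite[Eq.~(2.5)]{golinski_maa99} for \eqref{eq:FGeronimus}, derives \eqref{eq:mGeronimus} from it via \eqref{eq:def_func_m}, and points to Simon~\cite[Eq.~(1.6.82)]{simon_opuc1} for \eqref{eq:fGeronimus} (while remarking that \eqref{eq:fGeronimus} could alternatively be checked algebraically via \eqref{eq:Schur} and \eqref{eq:GProperty}). You instead derive everything from scratch: you start from the Schur-algorithm self-similarity $f_\alpha(z)=\frac{\alpha+zf_\alpha(z)}{1+\bar\alpha zf_\alpha(z)}$ (the fixed point under the one-step Schur map, valid because the stripped CMV matrix equals the original one when all coefficients are constant), obtain the quadratic $\bar\alpha z f_\alpha^2+(1-z)f_\alpha-\alpha=0$, select the branch consistent with $f_\alpha(0)=\alpha$ to get \eqref{eq:fGeronimus}, and then propagate via $m=\frac{f}{zf-1}$ and $F=1-2zm$, using \eqref{eq:GProperty} to clear the radical. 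The logical order is thus reversed ($f_\alpha\Rightarrow m_\alpha\Rightarrow F_\alpha$ versus $F_\alpha\Rightarrow m_\alpha$ with $f_\alpha$ separate), and your version is self-contained rather than reliant on external references. I checked your algebra; in particular, the cross-multiplication needed to pass from $m_\alpha=\frac{G_a-1}{z(G_a-1-\bar\alpha)}$ to \eqref{eq:mGeronimus} reduces, after substituting $G_a^2=(a^2-1)z+(z+1)G_a$ (a rearrangement of \eqref{eq:GProperty}), to a true identity, and the branch test $G_a(z)=1+a^2z+O(z^2)$ as $z\to0$ confirms $f_\alpha(0)=\alpha$.

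One small point worth making explicit: the Schur fixed-point identity is an identity on $\D$, so your derivation establishes \eqref{eq:FGeronimus}--\eqref{eq:fGeronimus} a priori on $\D$ only; one then extends to $\C\setminus\sigma_\alpha$ by analytic continuation, noting that $G_a$ is analytic on $\C\setminus\Gamma_a\supseteq\C\setminus\sigma_\alpha$, that $m_\alpha,F_\alpha$ are analytic on $\C\setminus\sigma_\alpha$ by definition, and that $\C\setminus\sigma_\alpha$ is connected (the complement of a proper circular arc plus at most one isolated point). You gesture at this in your final paragraph but don't spell out the connectedness; it is easy but should be said. Apart from that, the two informal self-corrections in your write-up (the aborted fixed-point formula with a ``?'', and the ``wait'' while inverting \eqref{eq:m_Schur}) should of course be cleaned up before this becomes a proof.
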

\begin{proof}
The formulas in \textit{(i)} and \textit{(ii)} are elementary from the definitions.
	
We prove part~\textit{(iii)}. The expression \eqref{eq:FGeronimus} for $F_\alpha$ can  be found in~\cite[Eq.~(2.5)]{golinski_maa99} (note that our $\alpha$ and $G_a$ correspond to their $-\bar{a}$ and $z_1$, respectively). Combining~\eqref{eq:FGeronimus} with~\eqref{eq:def_func_m} implies~\eqref{eq:mGeronimus}. Finally, expression~\eqref{eq:fGeronimus} appears in~\cite[Eq.~(1.6.82)]{simon_opuc1}. Alternatively, one can deduce~\eqref{eq:fGeronimus} by employing~\eqref{eq:Schur} and~\eqref{eq:GProperty}.
\end{proof}

\subsection{Periodic OPUC}\label{ss:periodic}

Suppose now that $\{\alpha_{n}\}_{n=0}^{\infty}\in\D^\infty$ is a $p$-periodic sequence for some $p\in\N$, i.e., $\alpha_{k+p}=\alpha_{k}$, for all $k\in\N_{0}$. An important role is played by the discriminant
\begin{equation}\label{eq:def_discr}
	\Delta(z)=\Delta\left(\alpha_{0},\dots,\alpha_{p-1};z\right)=z^{-p/2}\Tr\left(A(\alpha_{p-1},z)\dots A(\alpha_{0},z)\right),
\end{equation}
where $z\in\C\setminus\{0\}$ and
\[
A(\alpha,z)=\frac{1}{\sqrt{1-|\alpha|^{2}}}\begin{pmatrix}
	z & -\bar{\alpha} \\
	-\alpha z & 1
\end{pmatrix}.
\]
To simplify technical difficulties arising from the fractional power of $z$ in the definition~\eqref{eq:def_discr}, it is often convenient to assume that $p$ is even. This causes no loss of generality, since any $p$-periodic sequence is also $2p$-periodic; cf.~\cite[Thm.~11.1.3]{simon_opuc2} and Remark~1 therein.

The discriminant $\Delta$ is real on $\T$, satisfies 
\begin{equation}\label{eq:Delta(0)}
	\lim_{z\to0}z^{p/2}\Delta(z)=\prod_{j=0}^{p-1} \rho_j^{-1},
\end{equation}
and
\[
	\mathcal{B}_\Delta=\Delta^{-1}([-2,2])
\]
is a so-called finite-gap set, i.e., a union of $p$ closed circular arcs on~$\T$
that can touch at endpoints but are otherwise disjoint. The essential support of the spectral measure $\mu$ coincides with $\mathcal{B}_\Delta$.

In particular, the case of constant Verblunsky coefficients $\alpha_n\equiv \alpha\in\D$ from the previous section corresponds to
$$
\Delta(\alpha,\alpha;z) =\frac{1}{1-|\alpha|^2} ( z+z^{-1} + 2|\alpha|^2),
$$
and $\mathcal{B}_\Delta$ becomes just $\Gamma_{|\alpha|}$ as in~\eqref{eq:arc} if $|\alpha|<1$.

\subsection{Elements of potential theory}\label{sect:potential}

We recall a few facts from potential theory. These facts can be deduced, for instance, from \cite[Chap.~11]{simon_opuc2} where they are summarized in greater generality. The reader may also consult~\cite{saff-totik_97,stahl-totik_92}.
For $a\in[0,1]$, we denote by~$\nu_{a}$ the equilibrium measure of the circular arc $\Gamma_{a}$ in~\eqref{eq:arc}, where $\theta_{a}$ is as in~\eqref{eq:def_theta}. If $a\in[0,1)$, the probability measure~$\nu_{a}$ is purely absolutely continuous with the density
\begin{equation}
	\frac{\dd\nu_{a}}{\dd\theta}{(\ee^{\ii\theta})}=\frac{1}{2\pi}\frac{\sin(\theta/2)}{\sqrt{\cos^{2}(\theta_{a}/2)-\cos^{2}(\theta/2)}}, \quad \ee^{\ii\theta}\in \Gamma_a. 
	\label{eq:meas_nu}
\end{equation}
If $a=1$, $\nu_{1}$ is the Dirac unit mass measure $\delta_{-1}$ supported on {the one-point set} $\Gamma_{1}=\{-1\}$. The logarithmic potential of~$\nu_{a}$ is
\begin{equation} \label{eq:U_eq_log_G}
	U_{\nu_{a}}(z)=-\int_{\theta_{a}}^{2\pi-\theta_{a}}\log\big|z-\ee^{\ii\theta}\big|\dd\nu_{a}\big( \ee^{\ii \theta} \big)=-\log|G_{a}(z)|, \quad z\in\C\setminus\T,
\end{equation}
where $G_a$ is {as in}~\eqref{eq:def_G}.

More generally, given a discriminant $\Delta$ as in~\eqref{eq:def_discr}, denote by $\nu_{\Delta}$  the equilibrium measure of  the corresponding finite-gap set $\mathcal{B}_\Delta$. It is well known that $\nu_{\Delta}$ is supported on  $\mathcal{B}_{{\Delta}}$ and is absolutely continuous with the density\footnote[1]{Be aware of a typo in the density formulas in \cite[Eqs.~(11.1.18)--(11.1.19)]{simon_opuc2}: the expression includes a superfluous factor $1/2$.}
\begin{equation}
	\frac{\dd \nu_{\Delta}}{\dd\theta}(\ee^{\ii\theta})=\frac{1}{\pi p}\frac{\left|\Delta'(\ee^{\ii\theta})\right|}{\sqrt{4-\Delta^{2}(\ee^{\ii\theta})}}.
	\label{eq:def_nu_periodic}
\end{equation}
The prime designates differentiation of $\Delta(\ee^{\ii\theta})$ with respect to $\theta$.

For $z\in\C\setminus \mathcal{B}_\Delta$, the logarithmic potential of $\nu_{\Delta}$ reads
\begin{equation}	\label{eq:U_eq_log_G_Delta}
	U_{\nu_{\Delta}}(z)=-\int_{\mathcal{B}_\Delta}\log\big|z-\ee^{\ii\theta}\big|\dd\nu_{\Delta}\big(\ee^{\ii \theta} \big)=- \frac{1}{p}\log\left|G_{\Delta}(z)\right|,
\end{equation}
where
\begin{equation}	\label{eq:def_G_Delta}
	G_{\Delta}(z)=\frac{z^{p/2}}{2}\left(\prod_{j=0}^{p-1}\sqrt{1-|\alpha_{j}|^2}\right)\left(\Delta(z)+\sqrt{\Delta^{2}(z)-4}\right).
\end{equation}
The branch of the square root in~\eqref{eq:def_G_Delta} is to be chosen such that it maximizes~$|\Delta(z)+\sqrt{\Delta^{2}(z)-4}|$. 
For a future reference, note that 
\begin{equation}
	\label{eq:ptyG1}
	G_{\Delta}(0)=1,
\end{equation}
which follows from ~\eqref{eq:Delta(0)} and \eqref{eq:def_G_Delta}.


\subsection{Zero counting measure and root asymptotics}\label{sec:root_asym}

Let $\mathcal{C}^{(0)}$ be a CMV matrix with either constant or periodic Verblunsky coefficients as in Section~\ref{subsec:constant} or ~\ref{ss:periodic}, respectively. Let $\Delta$, $\mathcal{B}_\Delta$, and $\nu_\Delta$ denote the discriminant, the associated finite-gap set, and the equilibrium measure, respectively. 

Let $\mathcal{C}(\alpha_0,\alpha_1,\ldots)$ be another CMV matrix that is a compact perturbation of $\mathcal{C}^{(0)}$. In other words, $\alpha_n$ either converges to some $\alpha$ or is asymptotically $p$-periodic. 
Let $\Phi_n(z)$ and  $\Phi^{(\beta)}_n(z)$ be the corresponding OPUC and POPUC, see~\eqref{eq:charact} and~\eqref{eq:POPUCdef}. Denote by $z_{1,n},\dots,z_{n,n}$ and $z^{(\beta)}_{1,n},\dots,z^{(\beta)}_{n,n}$ the zeros of $\Phi_{n}$ and $\Phi_{n}^{(\beta)}$, respectively. Recall the definition of 
the zero-counting measures of $\Phi_n$ and $\Phi_n^{(\beta)}$,
\[
	\nu_{n}=\frac{1}{n}\sum_{j=1}^{n}\delta_{z_{j,n}}
\quad\mbox{ and }\quad
	\nu^{(\beta)}_{n}=\frac{1}{n}\sum_{j=1}^{n}\delta_{z^{(\beta)}_{j,n}}.
\]

It is well known that for the case when $\mathcal{C}^{(0)}$ is \textbf{not} the free CMV matrix, both zero-counting measures converge to the corresponding equilibrium meausure, that is,
\begin{equation}
	\wlim_{n\to\infty} \nu_{n} = 
	\wlim_{n\to\infty} \nu_{n}^{(\beta)}=\nu_{\Delta},
	\label{eq:nu_n_tends_to_nu_scalar_opuc}
\end{equation}
see~\cite{stahl-totik_92,simon_opuc1} and references therein.

For the case when $\mathcal{C}^{(0)}$ \textbf{is} the free CMV matrix, i.e., $\alpha_n\to0$, we still have $\nu_{n}^{(\beta)}\to\nu_{0}$ weakly as $n\to\infty$, where $\nu_0$ the uniform probability measure on $\T$. However, things are more subtle for $\nu_n$. If one additionally assumes that
\[
	\lim_{n\to\infty} |\alpha_n|^{1/n} = A\in[0,1],
\]
then Mhaskar and Saff~\cite{mha-saf_jat90} showed that $\nu_{n}$ converges weakly to the uniform probability measure on the circle of radius $A$ {centered at} the origin.

Convergence in~\eqref{eq:nu_n_tends_to_nu_scalar_opuc} is closely related to the so-called {\it root asymptotics} of OPUC: 
\begin{equation}\label{eq:root}
	\log|\Phi_n(z)|^{1/n} = \int \log|z-\xi|\,  \dd \nu_{n} (\xi)   \to \int \log|z-\xi|\,  \dd\nu_\Delta(\xi) 
	=
	-U_{\nu_\Delta}(z)
\end{equation} 
for $z\in\C\setminus\overline\D$ as $n\to\infty$. Root asymptotics~\eqref{eq:root} holds under much weaker assumptions than we are assuming here. Indeed, 
in the current setting one can establish the stronger {\it ratio asymptotics} of orthogonal polynomials. This is the topic of our next section.

\section{Ratio asymptotics for varying POPUC and OPUC}\label{sec:ratio_asym}

The purpose of this section is to review the known results on ratio asymptotics for OPUC and POPUC and to present a short proof that also applies in the context of \textbf{variable orthogonality}, meaning that each $N\in\N$ may correspond to a different orthogonality measure $\mu_N$.
Denote the Verblunsky coefficients of $\mu_N$ by $\{\alpha_{n,N}\}_{n=0}^\infty$ and the corresponding  OPUC and POPUC by
\[
	\Phi_{n,N}(z)=\det\left(z-\mathcal{C}_{n}(\alpha_{0,N},\alpha_{1,N},\dots,\alpha_{n-1,N})\right)
\]
and
\[
	\Phi_{n,N}^{(\beta)}(z)=\det\left(z-\mathcal{C}_{n}(\alpha_{0,N},\alpha_{1,N},\dots,\alpha_{n-2,N},\beta)\right),
\]
where $\beta\in\T$.

\subsection{Asymptotically constant Verblunsky coefficients}\label{subsec:RatioConstant}

We assume that the Verblunsky coefficients of $\mu_N$ satisfy 
\begin{equation}\label{eq:convergence}
	\lim_{n/N\to t}\alpha_{n,N}=\alpha.
\end{equation}
for some $\alpha\in\overline{\D}$ {and $t>0$}. If $\mu_N$ does not depend on $N$, then ~\eqref{eq:convergence} is typically called the L\'{o}pez or Nevai condition.
It is then well known  that this condition implies ratio asymptotics of orthogonal polynomials: for $\alpha=0$ this is quite basic, see~\cite[Sec.~1.7]{simon_opuc1}, while the cases $\alpha\in\overline{\D}$ are treated in~\cite{BarLop,Khr02} and~\cite[Sec.~9.5]{simon_opuc2}; see also~\cite{BRZ} for further developments.

The case of varying orthogonality is stated in Theorem~\ref{thm:ratio} below. We provide a~short and simple proof based on the operator theory of CMV matrices. It is related to the ideas of~\cite{Khr02} (and~\cite[Sec.~9.5]{simon_opuc2}), but instead of Schur functions we work with $m$-functions (an approach already used for Jacobi matrices in~\cite{Sim04}). 
This choice is natural from the CMV perspective, since the ratio of polynomials can be expressed as a single matrix element of the resolvent of the CMV matrix via Cramer's rule. 

The proof of Theorem~\ref{thm:ratio} takes an especially streamlined form if one is satisfied with convergence on compacts of $\D$ (or of $\C\setminus\overline{\D}$ in part {\it(iv)}). With just a bit more care and an application of the Vitali theorem (or Vitali--Porter theorem, see, for example,~\cite[Sec.~2.4]{schiff_93}), we are able to prove convergence on compacts of $\C$ as long as one stays away from the limiting zero set of the polynomials.
Furthermore, the argument extends to the variable orthogonality setting with no additional complications. 

The idea of the proof is as follows. We represent the ratio $\Phi_n/\Phi_{n+1}^{(\beta)}$  as the $(n,n)$-entry of the resolvent of $\mathcal{C}^{(\beta)}_{n+1}$. Equivalently, this is the $(0,0)$-entry of the resolvent of the CMV matrix but flipped ``on its head'', which has itself a CMV  structure with Verblunsky coefficients reversed and conjugated (see Lemma~\ref{lem:relf_id}). Under the condition~\eqref{eq:convergence} this becomes the $(0,0)$-entry of the resolvent of a varying CMV matrix whose Verblunsky coefficients all converge to the same value in the limit $n/N\to t$. But this is the $m$-function $m_\alpha$ from Section~\ref{subsec:constant}. After we establish the asymptotics of $\Phi_n/\Phi_{n+1}^{(\beta)}$,  all other {limit relations will} follow from the Szeg\H{o} recurrences.

Below, $\delta_{m,n}$ denotes the Kronecker delta symbol and $A^{T}$ stands for the transpose of a matrix $A$.

\begin{lem}\label{lem:relf_id}
	Let $n\in\N$, $\beta\in\T$. Introduce two $n\times n$ unitary matrices
	\begin{equation*}
		\left(Q_{n}\right)_{j,k} =\delta_{j,n-1-k}, \quad \mbox{ for } j,k\in\{0,1,\dots,n-1\}
	\end{equation*}
	and 
	$P_{n}(\beta) =\operatorname{diag}(1,\beta,1,\beta,\ldots)$.
	Then
	\[
	Q_n\mathcal{C}_{n}(\alpha_{0},\dots,\alpha_{n-2},\beta) Q_n=\begin{cases}
		P_{n}(\beta)\mathcal{C}_{n}^{T}(-\bar{\alpha}_{n-2}\beta,\dots,-\bar{\alpha}_{0}\beta,\beta) P_{n}^{*}(\beta), & \mbox{ if } n \mbox{ is odd,}\\[2pt]
		P_{n}^{*}(\beta)\mathcal{C}_{n}(-\bar{\alpha}_{n-2}\beta,\dots,-\bar{\alpha}_{0}\beta,\beta)P_n(\beta), & \mbox{ if } n \mbox{ is even.}
	\end{cases}
	\]
\end{lem}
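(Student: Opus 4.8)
The plan is to verify the claimed identity by direct comparison of matrix entries, exploiting the explicit sparse structure of the CMV matrix in~\eqref{eq:CMV}. Conjugation by $Q_n$ is the flip $(j,k)\mapsto(n-1-j,n-1-k)$, i.e. it reverses both the row and column order; conjugation by the diagonal unitary $P_n(\beta)$ (or its adjoint) multiplies the $(j,k)$ entry by a unimodular factor depending only on the parities of $j$ and $k$. So the right-hand side is, entrywise, a rearranged-and-rephased version of $\mathcal{C}_n$ built from the reversed, conjugated, $\beta$-twisted coefficients $-\bar\alpha_{n-2}\beta,\dots,-\bar\alpha_0\beta,\beta$, and the task is purely bookkeeping: check that the five ``types'' of nonzero entries appearing in~\eqref{eq:CMV} — namely $\bar\alpha_k$, $-\bar\alpha_{k+1}\alpha_k$, the off-diagonal $\rho$-terms $\bar\alpha_{k+1}\rho_k$, $-\rho_{k+1}\alpha_k$, and $\rho_{k+1}\rho_k$ — transform correctly under the flip, the rephasing, and the substitution $\alpha_j\mapsto -\bar\alpha_{n-2-j}\beta$ (with the last coefficient $\alpha_{n-1}=\beta$ in $\mathcal{C}_n^{(\beta)}$ going to $\beta$ itself). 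The transpose on the right-hand side in the odd case, versus no transpose in the even case, is exactly what the row/column flip forces once one tracks the two alternating ``blocks'' the CMV matrix is made of.

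First I would record the elementary facts: $Q_n^2=I$, $P_n(\beta)^*=P_n(\bar\beta)=\operatorname{diag}(1,\bar\beta,1,\bar\beta,\dots)$, and under $\alpha_j\mapsto\tilde\alpha_j:=-\bar\alpha_{n-2-j}\beta$ one has $\rho_j=\sqrt{1-|\alpha_j|^2}\mapsto\sqrt{1-|\tilde\alpha_j|^2}=\sqrt{1-|\alpha_{n-2-j}|^2}=\rho_{n-2-j}$ since $|\beta|=1$; also $\overline{\tilde\alpha_j}=-\alpha_{n-2-j}\bar\beta$. Then I would split into the two parity cases. The CMV matrix has the well-known $LM$ factorization $\mathcal{C}=LM$ with $L=\Theta_0\oplus\Theta_2\oplus\cdots$ and $M=\mathbb 1_{1\times1}\oplus\Theta_1\oplus\Theta_3\oplus\cdots$, where $\Theta_j=\begin{pmatrix}\bar\alpha_j & \rho_j\\ \rho_j & -\alpha_j\end{pmatrix}$; the cut-off $\mathcal{C}_n$ is $L_nM_n$ with the obvious truncations (and the $(n-1)$-st coefficient frozen to $\beta$ in $\mathcal{C}_n^{(\beta)}$). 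Conjugating the factorization by $Q_n$ reverses the order of the $2\times2$ blocks and, because $Q_n$ conjugates a $2\times2$ block $\begin{pmatrix}a&b\\c&d\end{pmatrix}$ sitting in positions $\{i,i+1\}$ to $\begin{pmatrix}d&c\\b&a\end{pmatrix}$ sitting in positions $\{n-2-i,n-1-i\}$, each $\Theta_j$ turns into a reversed block; tracking where the ``scalar $1$'' of $M$ lands (at the top in $M_n$, hence at the bottom after the flip) is what makes $L$ and $M$ swap roles when $n$ is odd but not when $n$ is even — producing the transpose in one case only. Finally I would apply $P_n(\beta)^{(*)}$ to rephase, absorbing the factors $\beta,\bar\beta$ into the off-diagonal and diagonal entries so that the result is literally $\mathcal{C}_n^{(T)}(\tilde\alpha_0,\dots,\tilde\alpha_{n-2},\beta)$; checking the boundary entries (top-left $\bar\alpha_0$, bottom-right, and the corner involving $\beta$) is the one spot that needs individual attention rather than the generic pattern.

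The main obstacle is not conceptual but combinatorial: keeping the index shifts, the parity-dependent phases from $P_n(\beta)$, and the alternating block partition mutually consistent, and in particular getting the boundary/corner entries right — the CMV matrix~\eqref{eq:CMV} is ``eventually periodic'' in its pattern but its first row/column and last row/column are special, and the frozen coefficient $\beta$ at position $n-1$ interacts with the flip differently depending on the parity of $n$. I expect the cleanest write-up to avoid entrywise chasing altogether by working with the $LM$ factorization as above: then the proof reduces to the single observation about how $Q_n$ acts on a direct sum of $2\times2$ blocks together with the $1\times1$ block, plus the rephasing identity $Q_n P_n(\beta) = P_n(\beta)^{(*)} Q_n$ up to an overall unimodular constant (which is harmless since it cancels in the conjugation), and the parity bookkeeping becomes transparent.
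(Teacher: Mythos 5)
The paper's own ``proof'' is a citation to \cite[Lem.~5.1 and Sec.~7]{KK}, so it gives no argument at all. Your proposal supplies an actual, self-contained proof, and the route you choose --- the $\mathcal{L}\mathcal{M}$-factorization of the CMV matrix combined with the observation that $Q_n$ reverses the block order while $J=\bigl(\begin{smallmatrix}0&1\\1&0\end{smallmatrix}\bigr)$ flips each $\Theta_j$ --- is a sound and natural one. Your key structural insight is correct: for odd $n$ the scalar $1\times1$ block of $M_n$ and the scalar block containing $\bar\beta$ (which sits in $L_n$ when $n$ is odd, in $M_n$ when $n$ is even) swap their positions at the top and bottom under the flip, so $L$ and $M$ exchange their ``shape,'' which is exactly why the odd case produces a transpose ($(L'M')^T = M'L'$ by symmetry of $\Theta_j$) while the even case does not. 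I checked the identity directly for $n=2$ and $n=3$, and the parity bookkeeping comes out as you describe.

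One point to be careful about when writing this up: the rephasing by $P_n(\beta)$ (resp.\ $P_n^*(\beta)$) cannot be distributed over the two factors in the naive way. That is, it is \emph{not} true that $Q_n L_n Q_n = P_n^*(\beta)\,L'_n\,P_n(\beta)$ and $Q_n M_n Q_n = P_n^*(\beta)\,M'_n\,P_n(\beta)$ separately --- already for $n=2$ one finds $Q_2 L_2 Q_2 = J\Theta_0 J = \bigl(\begin{smallmatrix}-\alpha_0 & \rho_0\\ \rho_0 & \bar\alpha_0\end{smallmatrix}\bigr)$ while $P_2^*(\beta)\,\tilde\Theta_0\,P_2(\beta) = \bigl(\begin{smallmatrix}-\alpha_0\bar\beta & \rho_0\beta\\ \bar\beta\rho_0 & \bar\alpha_0\beta\end{smallmatrix}\bigr)$. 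The product identity holds, but to split it one needs an intermediate diagonal $D$ different from $P_n(\beta)$, inserted as $(Q_n L_n Q_n)D$ and $D^{-1}(Q_n M_n Q_n)$; for $n=2$ the choice $D=\operatorname{diag}(\bar\beta,1)$ works. Alternatively one can conjugate the \emph{product} by $P_n^{(*)}(\beta)$ and compare entries of the full CMV matrix directly, which is what your sketch literally says. Either is fine, but the final paragraph of your sketch, where you suggest the cleanest write-up reduces ``to the single observation about how $Q_n$ acts on a direct sum'' plus the commutation $Q_n P_n(\beta) = c\,P_n^{(*)}(\beta)\,Q_n$, implicitly suggests a factor-by-factor rephasing that is not quite available as stated. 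This is a resolvable bookkeeping issue rather than a conceptual gap, but it is worth flagging so that the eventual write-up chooses the correct intermediate diagonal or else compares the product entrywise.
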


\begin{proof}
	See~\cite[Lem.~5.1 and Sec.~7]{KK}.
\end{proof}

For the formulation of the ratio asymptotic formulas, we need to introduce the sets of limit points of zeros of POPUC. Let $\Lambda_{\beta}$ denote the set of (weak) limit points of zeros of $\Phi_{n,N}^{(\beta)}$, i.e.
\begin{equation}\label{eq:Lambda1}
	\Lambda_{\beta,t} = \Big\{\ee^{\ii\theta} \in \T \;\Big|\; \liminf_{j\to\infty} \dist\left(\ee^{\ii\theta},\calZ(\Phi_{n_j,N_j}^{(\beta)})\right) =0 \mbox{ for some } n_j,N_j \mbox{ as in~\eqref{eq:subs}}\Big\},
\end{equation}
where 
\[
\calZ(\Phi_{n,N}^{(\beta)}) = \big\{z\in\T \;\big|\; \Phi_{n,N}^{(\beta)}(z)=0 \big\}.
\]
We also define
\begin{equation}\label{eq:Lambda2}
	\Lambda_t = \bigcap_{\beta\in\T} 	\Lambda_{\beta,t}.
\end{equation}

\begin{rem}\label{rem:spec_incl_lam}
	It is an interesting problem to understand the sets $\Lambda_{\beta,t}\subseteq\T$ and $\Lambda_t\subseteq\T$ which we will not attempt to address here beyond noticing that, under the condition~\eqref{eq:convergence}, $\Lambda_{\beta,t}\supseteq \sigma_{-\bar{\alpha}\beta} \supseteq \Gamma_a$, and therefore $\Lambda_t\supseteq \Gamma_a$, which follows from the proof {of Theorem~\ref{thm:ratio}}\emph{(i)} below. 
\end{rem}


\begin{thm}\label{thm:ratio}
	Let $\beta\in\T$, $t>0$, and let $\{\alpha_{n,N} \mid n,N\in\N_{0}\}\subset\D$ satisfy~\eqref{eq:convergence} with $\alpha\in\overline{\D}$. {Then the following limits hold:}
	\begin{enumerate}[(i)]
		\item 
		\[
		\lim_{n/N\to t}\frac{\Phi_{n,N}(z)}{\Phi_{n+1,N}^{(\beta)}(z)}= m_{-\bar{\alpha}\beta}(z) 
		\]
		uniformly on compacts of $\C\setminus{\Lambda_{\beta,t}}$,
		\item 
		\[
		\lim_{n/N\to t}\frac{\Phi_{n,N}(z)}{\Phi_{n,N}^{*}(z)}= f_{-\bar{\alpha}}(z)
		\]
		uniformly on compacts of $\C\setminus{\Lambda_t}$ if $\alpha\neq0$ and of $\D$ if $\alpha=0$,
		\item 
		\[
		\lim_{n/N\to t}\dfrac{\Phi^*_{n+1,N}(z)}{\Phi^*_{n,N}(z)}=G_a(z)
		\]
		uniformly on compacts of $\C\setminus{\Lambda_t}$ if $\alpha\neq0$ and uniformly on $\overline{\D}$ if $\alpha=0$\footnote[1]{\label{foot:1}We then extend $G_0$ to its non-tangential limits on $\T$ from the inside of $\D$ for {\it (iii)} and from the outside of $\D$ for {\it (iv)}.},
		\item 
		\[
		\lim_{n/N\to t}\dfrac{\Phi_{n+1,N}(z)}{\Phi_{n,N}(z)}=G_a(z) 
		\]
		uniformly on compacts of $\C\setminus{\Lambda_t}$ if $\alpha\neq0$ and of $\C\setminus\D$ if $\alpha=0${\normalfont \textsuperscript{\ref{foot:1}}},
		\item 
		\begin{equation}
			\label{eq:ratio_popuc_G}
			\lim_{n/N\to t}\frac{\Phi_{n+1,N}^{(\beta)}(z)}{\Phi_{n,N}^{(\beta)}(z)}=G_a(z) 
		\end{equation}
		uniformly on compacts of $ \C\setminus \Lambda_{\beta,t}$.
	\end{enumerate}
\end{thm}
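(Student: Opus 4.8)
The plan is to reduce everything to part \textit{(i)}, which identifies $\Phi_{n,N}/\Phi_{n+1,N}^{(\beta)}\to m_{-\bar\alpha\beta}$, together with the Szeg\H{o}-type recurrence for paraorthogonal polynomials. First I would observe that from the defining relation \eqref{eq:POPUC} applied with the varying coefficients one has, at level $n+1$,
\[
	\Phi_{n+1,N}^{(\beta)}(z)=z\Phi_{n,N}(z)-\bar\beta\Phi_{n,N}^{*}(z),
\]
and similarly $\Phi_{n+2,N}^{(\beta)}(z)=z\Phi_{n+1,N}(z)-\bar\beta\Phi_{n+1,N}^{*}(z)$. Hence the ratio in \eqref{eq:ratio_popuc_G}, shifted by one, can be written as
\[
	\frac{\Phi_{n+2,N}^{(\beta)}(z)}{\Phi_{n+1,N}^{(\beta)}(z)}
	= z\,\frac{\Phi_{n+1,N}(z)}{\Phi_{n+1,N}^{(\beta)}(z)} - \bar\beta\,\frac{\Phi_{n+1,N}^{*}(z)}{\Phi_{n+1,N}^{(\beta)}(z)}.
\]
I would then rewrite both terms on the right using part \textit{(i)} and parts \textit{(iii)}, \textit{(iv)}: namely $\Phi_{n+1,N}/\Phi_{n+1,N}^{(\beta)} = (\Phi_{n+1,N}/\Phi_{n,N})\cdot(\Phi_{n,N}/\Phi_{n+1,N}^{(\beta)})$ converges to $G_a(z)\,m_{-\bar\alpha\beta}(z)$, and $\Phi_{n+1,N}^{*}/\Phi_{n+1,N}^{(\beta)}$ I would handle by writing $\Phi_{n+1,N}^{*}/\Phi_{n+1,N}^{(\beta)} = (\Phi_{n+1,N}^{*}/\Phi_{n,N}^{*})\cdot(\Phi_{n,N}^{*}/\Phi_{n,N})\cdot(\Phi_{n,N}/\Phi_{n+1,N}^{(\beta)})$, whose limit is $G_a(z)\cdot f_{-\bar\alpha}(z)^{-1}\cdot m_{-\bar\alpha\beta}(z)$ (the middle factor being the reciprocal of the limit in part \textit{(ii)}). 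Collecting, the limit of the shifted ratio equals $m_{-\bar\alpha\beta}(z)G_a(z)\big(z-\bar\beta/f_{-\bar\alpha}(z)\big)$, which one then simplifies using the explicit formulas of Lemma~\ref{lem:m_and_G} and the identity \eqref{eq:m_Schur} relating $m$ and $f$ for the constant coefficient $-\bar\alpha\beta$; the claim is that this collapses to exactly $G_a(z)$.

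An alternative, and arguably cleaner, route I would actually prefer is to go directly through the resolvent picture used to prove part \textit{(i)}. By \eqref{eq:POPUCdef}, $\Phi_{n,N}^{(\beta)}=\det(z-\mathcal{C}_n^{(\beta)})$ with $\mathcal{C}_n^{(\beta)}$ unitary, so $\Phi_{n+1,N}^{(\beta)}/\Phi_{n,N}^{(\beta)}$ is, up to the standard cofactor bookkeeping, the reciprocal of a diagonal resolvent entry of $\mathcal{C}_{n+1}^{(\beta)}$ — specifically, by Cramer's rule, $\Phi_{n,N}^{(\beta)}(z)/\Phi_{n+1,N}^{(\beta)}(z) = \langle e_n,(z-\mathcal{C}_{n+1}^{(\beta)})^{-1}e_n\rangle$, the $(n,n)$-entry. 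Flipping this matrix ``on its head'' via Lemma~\ref{lem:relf_id} turns it into the $(0,0)$-entry of a CMV matrix whose Verblunsky coefficients are the reversed, conjugated, $\beta$-twisted coefficients, all of which converge to $-\bar\alpha\beta$ under \eqref{eq:convergence}; but here the relevant matrix is $\mathcal{C}_{n+1}^{(\beta)}$ rather than $\mathcal{C}_{n+1}$, so after the flip the coefficient in the $0$-slot is $\beta$ itself (unimodular) while all later ones tend to $-\bar\alpha\beta$. The $(0,0)$ resolvent entry of such a matrix converges to $m_{-\bar\alpha\beta}^{(1)}$ shifted appropriately — more precisely to the $m$-function of the once-stripped constant CMV matrix with a $\beta$-endpoint, which by the coefficient-stripping formula for $m$-functions (or equivalently by a one-step continued-fraction expansion) equals $1/G_a(z)$. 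Reciprocating gives \eqref{eq:ratio_popuc_G}.

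For the domain of uniform convergence: since each $\Phi_{n+1,N}^{(\beta)}/\Phi_{n,N}^{(\beta)}$ is meromorphic with poles only at the zeros of $\Phi_{n,N}^{(\beta)}$, all of which lie on $\T$, and since the zeros of $\Phi_{n+1,N}^{(\beta)}$ and $\Phi_{n,N}^{(\beta)}$ interlace on $\T$ (a standard property of POPUC), the only possible accumulation points of poles on any compact set lie in $\Lambda_{\beta,t}$. On compacts of $\D$ and of $\C\setminus\overline{\D}$ the convergence is immediate from the resolvent argument (dominated by operator norms), and then the Vitali--Porter theorem, exactly as invoked for part \textit{(i)}, upgrades this to uniform convergence on any compact subset of $\C\setminus\Lambda_{\beta,t}$ once one checks local boundedness there — which follows because on such a set the denominators $\Phi_{n,N}^{(\beta)}$ stay bounded away from zero uniformly in large $n,N$ by the definition \eqref{eq:Lambda1} of $\Lambda_{\beta,t}$.

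The main obstacle I anticipate is the algebraic simplification in the first approach — verifying that $m_{-\bar\alpha\beta}(z)G_a(z)\big(z-\bar\beta f_{-\bar\alpha}(z)^{-1}\big)$ really reduces to $G_a(z)$; this requires juggling the Geronimus formulas \eqref{eq:mGeronimus}, \eqref{eq:fGeronimus} with the parameter $-\bar\alpha\beta$ (whose modulus is still $a$) and is the kind of identity that is easy to get sign- or conjugation-wrong. Using the second approach sidesteps this: there the only nontrivial input is the convergence of the $(0,0)$ resolvent entry of the flipped matrix, which is precisely the content already established in the proof of part \textit{(i)} (only the $0$-slot coefficient differs, and it does not affect the limit since the $m$-function depends continuously on finitely many leading coefficients only through the measure's first moments, while here the limiting object is the stripped constant $m$-function), so I would present \eqref{eq:ratio_popuc_G} as a corollary of the machinery of part \textit{(i)} rather than re-deriving it from scratch.
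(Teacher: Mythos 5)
Your proposal addresses only part \textit{(v)} of the theorem, treating \textit{(i)}--\textit{(iv)} as already known; since the statement asks for all five parts, this is a substantial gap. In particular, you never supply the resolvent/Cramer's-rule/Lemma~\ref{lem:relf_id} argument that proves \textit{(i)}, nor the symmetry and recursion arguments that deliver \textit{(ii)}--\textit{(iv)}, even though these are precisely what the paper spends most of its effort on.

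Within part \textit{(v)}, your first route is correct and does differ mildly from the paper's. You use the POPUC recurrence $\Phi_{n+2,N}^{(\beta)} = z\Phi_{n+1,N}-\bar\beta\Phi_{n+1,N}^{*}$ and split the ratio into a $\Phi$-piece and a $\Phi^{*}$-piece, while the paper instead factors
$\Phi_{n+1,N}^{(\beta)}/\Phi_{n,N}^{(\beta)}$ as the product
$\bigl(\Phi_{n+1,N}^{(\beta)}/\Phi_{n,N}\bigr)\bigl(\Phi_{n-1,N}/\Phi_{n,N}^{(\beta)}\bigr)\bigl(\Phi_{n,N}/\Phi_{n-1,N}\bigr)$,
whose limits $1/m_{-\bar\alpha\beta}$, $m_{-\bar\alpha\beta}$, $G_a$ cancel immediately with no Geronimus-formula algebra at all. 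Your worry about the simplification is warranted but it does close: $f_{-\bar\alpha\beta}(z)=\beta f_{-\bar\alpha}(z)$ by~\eqref{eq:fGeronimus} (the denominator $\bar\alpha z$ becomes $-\alpha\bar\beta z$), so $z-\bar\beta/f_{-\bar\alpha}(z)=z-1/f_{-\bar\alpha\beta}(z)$, and then~\eqref{eq:m_Schur} gives $m_{-\bar\alpha\beta}(z)\bigl(z-1/f_{-\bar\alpha\beta}(z)\bigr)=1$, collapsing your limit to $G_a(z)$. You do, however, silently pass over the $\alpha=0$ case: there $f_{0}\equiv0$ on $\D$, so the term $\bar\beta/f_{-\bar\alpha}(z)$ is meaningless inside the disk and one needs the separate treatment the paper gives (using~\eqref{eq:POPUCSymmetry} to transfer convergence from $\C\setminus\overline\D$ into $\D$).

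Your second, ``preferred'' route contains two concrete errors. First, Cramer's rule gives
$\bigl\langle e_{n},(z-\mathcal{C}_{n+1}^{(\beta)})^{-1}e_{n}\bigr\rangle=\Phi_{n,N}(z)/\Phi_{n+1,N}^{(\beta)}(z)$,
not $\Phi_{n,N}^{(\beta)}(z)/\Phi_{n+1,N}^{(\beta)}(z)$: deleting the $n$-th row and column of $\mathcal{C}_{n+1}(\alpha_{0},\ldots,\alpha_{n-1},\beta)$ leaves $\mathcal{C}_{n}(\alpha_{0},\ldots,\alpha_{n-1})$, whose characteristic polynomial is $\Phi_{n,N}$, whereas $\Phi_{n,N}^{(\beta)}$ would require the last coefficient to be $\beta$ instead of $\alpha_{n-1}$. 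So you are not computing the reciprocal of the ratio you want. Second, after the flip of Lemma~\ref{lem:relf_id} the Verblunsky coefficient in slot $0$ is $-\bar\alpha_{n-1,N}\beta$, not $\beta$; the unimodular $\beta$ sits in slot $n$. If the slot-$0$ coefficient really were unimodular, $\rho_{0}$ would vanish and the CMV matrix would decouple into a $1\times1$ block, giving a $(0,0)$-resolvent entry $1/(z-\bar\beta)$ rather than anything like $1/G_a(z)$. The route, as written, does not work, and I would drop it in favor of your first argument or the paper's triple-product.

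Finally, your treatment of uniform convergence on compacts of $\C\setminus\Lambda_{\beta,t}$ is too brief: Vitali--Porter needs local uniform boundedness of the whole family $\Phi_{n+1,N}^{(\beta)}/\Phi_{n,N}^{(\beta)}$, and ``the denominators stay away from zero'' alone does not control the numerators (which are degree-$(n{+}1)$ polynomials). The paper instead gets boundedness from the resolvent estimate $\|(z-\mathcal{C}_{n+1,N})^{-1}\|\le 1/\varepsilon$ in \textit{(i)} and then obtains \textit{(v)} on $\C\setminus\Lambda_{\beta,t}$ by combining the locally uniform convergences of the individual factors in the product decomposition, rather than by applying Vitali directly to the POPUC ratio.
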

\begin{proof}
	\textit{(i)} By a variant of Cramer's Rule, we can compute the $(n,n)$-entry of the inverse of $z-\mathcal{C}_{n+1}(\alpha_{0,N},\alpha_{1,N},\dots,\alpha_{n-1,N},\beta)$ via
	\begin{equation}
		\langle e_{n},\left(z-\mathcal{C}_{n+1}(\alpha_{0,N},\alpha_{1,N},\dots,\alpha_{n-1,N},\beta)\right)^{-1}e_{n}\rangle
		=
		\frac{\Phi_{n,N}(z)}{\Phi_{n+1,N}^{(\beta)}(z)}.
		\label{eq:start_id_ratio_asympt_inproof}
	\end{equation}
	We claim that this ratio is equal to the $m$-function ~\eqref{eq:def_func_m} of the finite CMV matrix $\mathcal{C}_{n+1}(-{\bar{\alpha}_{n-1,N}}\beta,-{\bar{\alpha}_{n-2,N}}\beta,\dots,-{\bar{\alpha}_{0,N}}\beta,\beta)$, which we henceforth denote by 
	$$
	\mathcal{C}_{n+1,N}=\mathcal{C}_{n+1}(-{\bar{\alpha}_{n-1,N}}\beta,-{\bar{\alpha}_{n-2,N}}\beta,\dots,-{\bar{\alpha}_{0,N}}\beta,\beta)
	.
	$$
	Indeed, taking into account identities  $e_n=Q_{n+1}e_0$ and $e_0=P_{n+1}(\beta)e_0=P_{n+1}^{*}(\beta)e_0$ (see Lemma~\ref{lem:relf_id}), the left hand side of~\eqref{eq:start_id_ratio_asympt_inproof} can be written as
	\[
	{\langle e_{0},\left(z-P_{n+1}(\beta)Q_{n+1}\mathcal{C}_{n+1}(\alpha_{0,N},\alpha_{1,N},\dots,\alpha_{n-1,N},\beta)Q_{n+1}P_{n+1}^{*}(\beta) \right)^{-1}e_{0}\rangle}.
	\]
	It then follows from Lemma~\ref{lem:relf_id} that~\eqref{eq:start_id_ratio_asympt_inproof} equals to 
	$ \langle e_{0},\left(z-\mathcal{C}_{n+1,N} \right)^{-1}e_{0}\rangle$,
	where we use that the transpose does not effect the value of this scalar product.
	
	Under condition~\eqref{eq:convergence}, the Verblunsky coefficients of $\mathcal{C}_{n+1,N}$ converge to those of $\mathcal{C}(-\bar{\alpha}\beta)$ as $n/N\to t$. This implies the weak convergence of the spectral measures, see, e.g.,~\cite[Thm.~1.5.6]{simon_opuc1}, and therefore the convergence of the $m$-functions~\eqref{eq:def_func_m} uniformly on compacts of $\C\setminus\T$. This yields claim~\textit{(i)} on $\C\setminus\T$. We next extend the claim to the full set $\C\setminus{\Lambda_{\beta,t}}$.
	
	Choose sequences $n_j,N_j\to\infty$ with $n_j/N_j\to t$, and let $K$ be a compact subset of $\C\setminus{\Lambda_{\beta,t}}$. By the definition of ${\Lambda_{\beta,t}}$ and Lemma~\ref{lem:relf_id}, there exists $\varepsilon>0$ such that the spectrum of $\mathcal{C}_{n_j+1,N_j}$ is $\varepsilon$-distance away from $K$ for all $j$ sufficiently large. It follows that the $m$-functions of $\mathcal{C}_{n_j+1,N_j}$ are bounded by the constant $1/\varepsilon$ uniformly on $K$ and $j$ sufficiently large. Applying Vitali's theorem on analytic convergence, we obtain uniform convergence on compacts of $\C\setminus {\Lambda_{\beta,t}}$. In particular, this shows that $m_{-\bar{\alpha}\beta}$ is analytic on $\C\setminus{\Lambda_{\beta,t}}$. Hence $\sigma_{-\bar{\alpha}\beta}\subseteq{\Lambda_{\beta,t}}$, proving the claims of Remark~\ref{rem:spec_incl_lam}.
	
	\smallskip
	
	\textit{(ii)} 
	First, recall that
	\begin{equation}\label{eq:POPUCSymmetry}
		\Phi_n^{(\beta)}(z) = -\bar\beta z^n\overline{\Phi_n^{(\beta)}(\bar{z}^{-1})},
	\end{equation}
	which follows from the facts that $\Phi_n^{(\beta)}$ has all the zeros on $\T$ and $\Phi_n^{(\beta)}(0)=-\bar{\beta}$.
	This together with \textit{(i)} gives
	\begin{equation}
		\label{eq:starPOPUCAs}
		\lim_{n/N\to t}\frac{\Phi^*_{n,N}(z)}{\Phi_{n+1,N}^{(\beta)}(z)}=
		\lim_{n/N\to t}\frac{\overline{\Phi_{n,N}(\bar{z}^{-1})}}{-\bar{\beta} z\overline{\Phi_{n+1,N}^{(\beta)}(\bar{z}^{-1})}}=  -\beta z^{-1} \overline{m_{-\bar{\alpha}\beta}(\bar{z}^{-1}) }
	\end{equation}
	uniformly on compacts of $ \C\setminus {\Lambda_{\beta,t}}$. Since the limit function is nonzero on $\D$ by Lemma~\ref{lem:m}{\it(ii)}, we obtain
	\begin{equation}\label{eq:proof1}
		\lim_{n/N\to t}\frac{\Phi_{n,N}(z)}{\Phi_{n,N}^{*}(z)}=
		\lim_{n/N\to t}\frac{\Phi_{n,N}(z)}{\Phi_{n+1,N}^{(\beta)}(z)}
		\frac{\Phi_{n,N+1}^{(\beta)}(z)}{\Phi_{n,N}^{*}(z)}
		=
		-\bar{\beta}z \frac{m_{-\bar{\alpha}\beta}(z)}{\overline{m_{-\bar{\alpha}\beta}(\bar{z}^{-1}) }}
	\end{equation}
	uniformly on compacts of $\D$. The limit function simplifies to $\bar{\beta}f_{-\bar{\alpha}\beta}(z)$ by~\eqref{eq:m_Schur}. As $\beta\in\T$ is arbitrary, the limiting function in~\eqref{eq:proof1} must be $\beta$-independent, so we may put $\beta=1$ to see that the limit is $f_{-\bar\alpha}(z)$.
	
	Suppose now additionally that $\alpha\ne 0$. Then the limit function in~\eqref{eq:starPOPUCAs} is nonzero on  $\C\setminus {\Lambda_{\beta,t}}$: indeed, $m_{-\bar{\alpha}\beta}(z)=0$ forces $f_{-\bar{\alpha}\beta}(z)=0$ by~\eqref{eq:m_Schur}, which in turn gives $G_{a}(z)=1$ by~\eqref{eq:fGeronimus},  contradicting~\eqref{eq:GProperty}.
	The argument in~\eqref{eq:proof1} then shows uniform convergence on compacts of $\C\setminus {\Lambda_{\beta,t}}$ for all $\beta\in\T$. It follows that the limit holds locally uniformly on
	the union of sets $\C\setminus\Lambda_{\beta,t}$, where $\beta$ ranges $\T$, i.e., on the set $\C\setminus\Lambda_{t}$.
	
	\smallskip
	
	\textit{(iii)} We rewrite Szeg\H{o}'s recurrence~\eqref{eq:recur_opuc_star} as
	\[
	\frac{\Phi^*_{n+1,N}(z)}{\Phi^*_{n,N}(z)}=1-z\alpha_{n,N}\frac{\Phi_{n,N}(z)}{\Phi^{*}_{n,N}(z)}
	\]
	and apply claim~\textit{(ii)}. If $\alpha=0$ then we restrict ourselves to $z\in\overline{\D}$ and use that $|\Phi_{n,N}/\Phi^{*}_{n,N}|\leq 1$ (since this is a Blaschke product) to get the limit $1$ uniformly on $\overline{\D}$. If $\alpha\ne 0$ then for any $z\in\C\setminus{\Lambda_{t}}$ we obtain {the limit} $1-z\alpha f_{-\bar\alpha}(z)$ {which coincides with $G_a(z)$} by Lemma~\ref{lem:m_and_G}\emph{(ii)}--\emph{(iii)}. 
	
	\smallskip
	
	\textit{(iv)} It follows readily from claim~\textit{(iii)}, identity $\Phi_n^*(z) = z^n \overline{\Phi_n(\bar{z}^{-1})}$, and~\eqref{eq:GSymm}.

	\smallskip
	
	\textit{(v)}
	We rewrite
	\begin{equation}\label{eq:asLast}
		\frac{\Phi_{n+1,N}^{(\beta)}(z)}{\Phi_{n,N}^{(\beta)}(z)}=
		\frac{\Phi_{n+1,N}^{(\beta)}(z)}{\Phi_{n,N}(z)}
		\frac{\Phi_{n-1,N}(z)}{\Phi_{n,N}^{(\beta)}(z)}
		\frac{\Phi_{n,N}(z)}{\Phi_{n-1,N}(z)}.
	\end{equation}
	
	Suppose first that ${\alpha}\ne 0$. As discussed above, $m_{-\bar{\alpha}\beta}(z)$ is finite and nonzero on $\C\setminus{\Lambda_{\beta,t}}$. Therefore by \textit{(i)} and \textit{(iv)}, the three factors on the right-hand side of~\eqref{eq:asLast} converge uniformly on compacts of $z\in\C\setminus{\Lambda_{\beta,t}}$ to $1/m_{-\bar{\alpha}\beta}(z)$, $m_{-\bar{\alpha} \beta}(z)$, and $G_a(z)$, respectively. 
	
	If {$\alpha=0$} then $m_0(z) = 0$ for $z\in\D$ by Lemma~\ref{lem:m_and_G}{\it (i)}, so the above argument does not apply in $\D$. However it still works for $z\in\C\setminus\overline\D$, where $m_0(z)$ is nonzero, yielding uniform convergence of~\eqref{eq:asLast} on compacts of $\C\setminus\overline\D$ to $G_0(z) \equiv 1$. To get uniform convergence on compacts of $\D$ we use~\eqref{eq:POPUCSymmetry}
	to get that 
	\[
	\frac{\Phi_{n+1,N}^{(\beta)}(z)}{\Phi_{n,N}^{(\beta)}(z)}\to z\overline{G_0(\bar{z}^{-1})} = z. 
	\]
	This proves {\it(v)} on $\C\setminus\T$. By Remark~\ref{rem:spec_incl_lam},  $\Lambda_{\beta,t}\equiv\T$ when $\alpha=0$, so {\it(v)} holds on compacts of $\C\setminus\Lambda_{\beta,t}$. The proof is complete. 
\end{proof}

\begin{rem}
	Analogous asymptotics can also be obtained for the case when instead of~\eqref{eq:convergence} we require (the variable analogue of) the more general L\'{o}pez condition 
\[
		 \lim_{n/N\to t}|\alpha_{n,N}|=a
	\quad\mbox{ and }\quad 
		 \lim_{n/N\to t}\frac{\alpha_{n+1,N}}{\alpha_{n,N}}=\lambda,
\]
with $a> 0$ and $\lambda\in\T$.  In this setting, one replaces $\beta$ by $\beta_{n,N}=\gamma \alpha_{n-1,N}/|\alpha_{n-1,N}|$, for any $\gamma\in\T$, and the same argument goes through. The limit functions $m_{-\bar\alpha\beta}$ and $f_{-\bar\alpha\beta}$ are then  replaced by the $m$-function and the Schur function of the CMV matrix $\mathcal{C}(- a\gamma, -\bar\lambda a\gamma,-\bar\lambda^2 a\gamma,\ldots)$. 
\end{rem}

\subsection{Asymptotically periodic Verblunsky coefficients}\label{subsec:RatioPeriodic}

The very same proof extends to more general situations. In particular, consider the asymptotically periodic setting, where
\begin{equation}\label{eq:almostPeriodic}
	\lim_{kp/N\to t}\alpha_{kp+j,N}=\alpha_{j}, \quad \mbox{for all }  j\in\{0,\dots,p-1\},
\end{equation}
for some $(\alpha_j)_{j=0}^{p-1}\in\D^p\setminus\{(0,\ldots,0)\}$ and $t>0$. We will treat $\alpha_j$ as a $p$-periodic sequence in what follows.

Note that here we assume that $\alpha_j\in\D$, that is, we exclude the case when some of $\alpha_j$'s are unimodular. This is done to simplify the exposition. With enough care this case can be easily accommodated using the same methods.

Denote by $\mathcal{C}_{\beta}$ the $p$-periodic CMV matrix with the first $p$ Verblunsky coefficients $-\bar\alpha_{p-1} \beta,-\bar\alpha_{p-2} \beta,\ldots,-\bar\alpha_{0} \beta$. Let $m_{\beta}$ and $f_\beta$ be the $m$-function and the Schur function of $\mathcal{C}_{\beta}$. For $j\in\N_{0}$, let $m^{(j)}_{\beta}$ and $f^{(j)}_\beta$ be the $m$-function and the Schur function of $\mathcal{C}^{(j)}_{\beta}$, the $j$ times stripped CMV matrix $\mathcal{C}_{\beta}$ (see Section~\ref{subsec:CMV}). 
Let $\Delta$ be the discriminant of  $\mathcal{C}_{\beta}$ (which is independent of $\beta$ and coincides with the discriminant of each $\mathcal{C}^{(j)}_{\beta}$). Recall the sets $\Lambda_{\beta,t}$ and $\Lambda_{\beta}$ from~\eqref{eq:Lambda1} and~\eqref{eq:Lambda2}.
We also introduce 
\begin{align*}
	\calZ(f)&= \{z\in\D \mid f(z)=0 \},
	\\
	\calP(f)&= \{z\in\C\setminus\overline{\D} \mid f(z)= \infty \},
\end{align*}
for the sets of zeros and poles of a Schur function $f$. By analyticity of $f$ on $\D$ and~\eqref{eq:symSchur}, the set of zeros and poles of $f$ 
are indeed subsets of $\D$ and $\C\setminus\overline{\D}$, respectively. 

\begin{thm}\label{thm:ratioPeriodic}
	Let $\beta\in\T$, $t>0$, and let $\{\alpha_{n,N} \mid n,N\in\N_{0}\}\subset\D$ satisfy~\eqref{eq:almostPeriodic}  with $(\alpha_j)_{j=0}^{p-1}\in\D^p\setminus\{(0,\ldots,0)\}$. Fix any $j\in \N_0$.
	Then the following limits hold:
	\begin{enumerate}[(i)]
		\item
		\[
		\lim_{kp/N\to t}\frac{\Phi_{kp-j,N}(z)}{\Phi_{kp-j+1,N}^{(\beta)}(z)}= m^{(j)}_{\beta}(z) 
		\]
		uniformly on compacts of $\C\setminus{\Lambda_{\beta,t}}$.
		\item
		\[
		\lim_{kp/N\to t}\frac{\Phi_{kp-j,N}(z)}{\Phi_{kp-j,N}^{*}(z)}= f^{(j)}_{1}(z)
		\]
		uniformly on compacts of $\C\setminus({\Lambda_t}\cup \calP(f^{(j)}_{1}))$.
		\item
		\[
		\lim_{kp/N\to t}\dfrac{\Phi^*_{kp-j+1,N}(z)}{\Phi^*_{kp-j,N}(z)}=1-z \alpha_{p-j} f_1^{(j)}(z)
		\]
		uniformly on compacts of $\C\setminus({\Lambda_t}\cup \calP(f^{(j)}_{1}))$.
		\item
		\[
		\lim_{kp/N\to t}\dfrac{\Phi_{kp-j+1,N}(z)}{\Phi_{kp-j,N}(z)}=z-\bar\alpha_{p-j}{/f_1^{(j)}(z)}
		\]
		uniformly on compacts of $\C\setminus({\Lambda_t}\cup \calZ(f^{(j)}_{1}))$.
		\item
		\begin{align}
			\label{eq:ratio_popuc_Gper1}
			\lim_{kp/N\to t}\frac{\Phi_{kp-j+1,N}^{(\beta)}(z)}{\Phi_{kp-j,N}^{(\beta)}(z)}
			&=
			\frac{m_\beta^{(j-1)}(z)}{m_\beta^{(j)}(z)} (z-\bar\alpha_{p-j+1}/{f_1^{(j-1)}(z)})
			\\
			\label{eq:ratio_popuc_Gper2}
			& = \frac{\overline{m_\beta^{(j-1)}(\bar{z}^{-1})}}{\overline{m_\beta^{(j)}(\bar{z}^{-1})}} (1-z \alpha_{p-j+1} f_1^{(j-1)}(z))
		\end{align}
		uniformly on compacts of $ \C\setminus \Lambda_{\beta,t}$.
	\end{enumerate}
\end{thm}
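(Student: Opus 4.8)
My plan is to mirror the proof of Theorem~\ref{thm:ratio}\textit{(v)}; parts~\textit{(i)}--\textit{(iv)} above do essentially all of the work. Set $n=kp-j$ and start from the purely algebraic identity
\[
	\frac{\Phi_{n+1,N}^{(\beta)}(z)}{\Phi_{n,N}^{(\beta)}(z)}
	=
	\frac{\Phi_{n+1,N}^{(\beta)}(z)}{\Phi_{n,N}(z)}\cdot
	\frac{\Phi_{n-1,N}(z)}{\Phi_{n,N}^{(\beta)}(z)}\cdot
	\frac{\Phi_{n,N}(z)}{\Phi_{n-1,N}(z)},
\]
valid wherever the factors make sense. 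The first ratio is the reciprocal of the quantity treated in part~\textit{(i)} with running index $j$; the second is that same quantity with running index $j+1$; and the third is the quantity of part~\textit{(iv)} with running index $j+1$. Substituting the limits supplied by~\textit{(i)} and~\textit{(iv)}, and using the $p$-periodicity of $(\alpha_j)$, identifies the pointwise limit as in~\eqref{eq:ratio_popuc_Gper1}. On the open subset of $\C\setminus\Lambda_{\beta,t}$ where none of the three factor-limits equals $0$ or $\infty$, the elementary fact that reciprocals of nonvanishing locally uniform limits again converge locally uniformly yields convergence there, exactly as in the constant-coefficient case.

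The equality of the two forms~\eqref{eq:ratio_popuc_Gper1}--\eqref{eq:ratio_popuc_Gper2} is a purely algebraic identity among the limiting functions. It follows by combining: (a) the relation $m(z)/\overline{m(\bar z^{-1})}=-f(z)/z$ coming from Lemma~\ref{lem:m}, applied to both $\mathcal{C}_\beta^{(j)}$ and $\mathcal{C}_\beta^{(j+1)}$; (b) the identity $f_\beta^{(i)}=\beta f_1^{(i)}$, which holds because the Verblunsky coefficients of $\mathcal{C}_\beta^{(i)}$ are those of $\mathcal{C}_1^{(i)}$ multiplied by the unimodular constant $\beta$, and this scaling commutes with stripping; and (c) the standard Schur coefficient-stripping relation between $f_1^{(j)}$ and $f_1^{(j+1)}$ (equivalently, that $f_1^{(j)}(z)=0$ iff $z f_1^{(j+1)}(z)=\bar\alpha_{p-1-j}$). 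A one-line rearrangement then converts one form into the other.

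It remains to extend the locally uniform convergence to all compacts of $\C\setminus\Lambda_{\beta,t}$. By the proof of part~\textit{(i)}, the essential support of the spectral measure of $\mathcal{C}_\beta^{(j)}$ is contained in $\Lambda_{\beta,t}$, so by Lemma~\ref{lem:m}\textit{(ii)} the function $m_\beta^{(j)}$ is finite and nonzero on $(\T\setminus\Lambda_{\beta,t})\cup(\C\setminus\overline{\D})$; together with the fact that part~\textit{(iv)} already provides uniform convergence of the third factor on compacts of $\C\setminus\overline{\D}$, the factor-by-factor argument covers all of $\C\setminus\Lambda_{\beta,t}$ except the set $\calZ(f_1^{(j)})\cup\calZ(f_1^{(j+1)})$, which lies inside $\D$. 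Across these remaining points the convergence extends automatically: every $\Phi_{kp-j,N}^{(\beta)}$ has all of its zeros on $\T$, so each prelimit ratio $\Phi_{kp-j+1,N}^{(\beta)}/\Phi_{kp-j,N}^{(\beta)}$ is analytic in a whole disk around such a point, while it converges uniformly on a small bounding circle lying in the generic set; the maximum-modulus principle then promotes this to uniform convergence on the disk --- and, in passing, shows that the limiting expression is analytic there and that the apparent singularities of the three factors cancel.

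I expect the main obstacle to be precisely this last step, namely controlling the convergence across the points where the individual factors degenerate. Its resolution rests on two structural observations: that the exceptional set $\calZ(f_1^{(j)})\cup\calZ(f_1^{(j+1)})$ is confined to $\D$, so the paraorthogonal ratios (whose only poles lie on $\T$) stay analytic across it, and that, once one knows this, the maximum-modulus (equivalently, Vitali) argument supplies both the uniform convergence and the analyticity of the limit for free. Everything else is careful tracking of the index shifts in the factorization, together with a repetition of the reciprocal-limit and resolvent-bound arguments already carried out in the proofs of parts~\textit{(i)}--\textit{(iv)}.
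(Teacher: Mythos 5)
Your proposal addresses only part \emph{(v)}; parts \emph{(i)}--\emph{(iv)} are taken as given rather than proved. That is a small omission (the paper handles them in a few lines by pointing to Theorem~\ref{thm:ratio} and replacing the constant-coefficient limit with the appropriate stripped periodic $m$- and Schur-functions), but you should at least record that \emph{(ii)} now requires excising $\calP(f_1^{(j)})$ because the limiting $m$-function may vanish, unlike in the constant case.

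For part \emph{(v)} your route genuinely diverges from the paper's. You use the single factorization~\eqref{eq:asLast} (through $\Phi$'s), obtain convergence on $\C\setminus(\Lambda_{\beta,t}\cup\calZ(f_1^{(j)})\cup\calZ(f_1^{(j+1)}))$, and then push convergence across the exceptional set $\calZ\subset\D$ by a maximum-modulus / Vitali argument, exploiting the fact that the prelimit ratios $\Phi_{kp-j+1,N}^{(\beta)}/\Phi_{kp-j,N}^{(\beta)}$ are analytic throughout $\D$. You then get~\eqref{eq:ratio_popuc_Gper2} from~\eqref{eq:ratio_popuc_Gper1} by a purely algebraic manipulation using $m/\overline{m(\bar z^{-1})}=-f/z$, the unimodular rescaling $f_\beta^{(i)}=\beta f_1^{(i)}$, and one coefficient-stripping step (your verification that $f_1^{(j)}(z)=0\Leftrightarrow zf_1^{(j+1)}(z)=\bar\alpha_{p-1-j}$ is correct). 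The paper instead derives the two expressions~\eqref{eq:ratio_popuc_Gper1} and~\eqref{eq:ratio_popuc_Gper2} from two \emph{different} factorizations --- the first via~\eqref{eq:asLast}, the second via
\[
\frac{\Phi_{n+1,N}^{(\beta)}}{\Phi_{n,N}^{(\beta)}}=\frac{\Phi_{n+1,N}^{(\beta)}}{\Phi^*_{n,N}}\cdot\frac{\Phi^*_{n-1,N}}{\Phi_{n,N}^{(\beta)}}\cdot\frac{\Phi^*_{n,N}}{\Phi^*_{n-1,N}},
\]
argued through~\eqref{eq:starPOPUCAs} and parts \emph{(i)}, \emph{(iii)} --- and then observes that the two resulting domains of validity ($\C$ minus $\Lambda_{\beta,t}\cup\calZ$ and minus $\Lambda_{\beta,t}\cup\calP$) cover all of $\C\setminus\Lambda_{\beta,t}$ because $\calZ\subset\D$ and $\calP\subset\C\setminus\overline{\D}$. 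Both routes work; the paper's sidesteps the need to show the exceptional set can be isolated by small circles (which, while true here --- zeros of a Schur function are discrete in $\D$, and only finitely many meet any compact of $\C\setminus\Lambda_{\beta,t}$ --- does require a word of justification in your version), while yours avoids introducing the second, $\Phi^*$-based factorization.

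One index caveat you should be aware of: tracking the factorization~\eqref{eq:asLast} with $n=kp-j$ and parts \emph{(i)}, \emph{(iv)} as you describe yields
\[
\frac{m_\beta^{(j+1)}(z)}{m_\beta^{(j)}(z)}\Bigl(z-\frac{\bar\alpha_{p-j-1}}{f_1^{(j+1)}(z)}\Bigr),
\]
with the stripping indices $j,j{+}1$ rather than $j{-}1,j$ as written in~\eqref{eq:ratio_popuc_Gper1}. Your algebraic verification of the equality of the two forms also works with the indices $j,j{+}1$ (indeed the coefficient $\bar\alpha_{p-1-j}$ appearing in your stripping relation matches $\bar\alpha_{p-j-1}$, not $\bar\alpha_{p-j+1}$). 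Either the statement's indices or the choice of $n$ must shift by two; since your derivation is internally consistent this does not affect the validity of your argument, but you should flag it rather than assert the limit "identifies ... as in~\eqref{eq:ratio_popuc_Gper1}" without checking.
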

\begin{rem}
We remark that the same argument applies to the setting of right limits, using essentially the same proof (in particular, one can allow $\alpha_{k}\in\overline{\D}$). We do not pursue this here. See~\cite{BRZ} for a related setting. 
\end{rem}
\begin{proof}
	The proof of {\it(i)} is identical to that of Theorem~\ref{thm:ratio}{\it(i)}. 
	
	The only difference in the proof of {\it(ii)} is that, unlike in Theorem~\ref{thm:ratio}{\it(ii)}, where $m_{-\bar\alpha\beta}(\bar{z}^{-1})$ never vanishes on $\C\setminus\Lambda_{\beta,t}$, in our case the $m$-function may vanish. We therefore need to avoid points $z$ where $m_{\beta}^{(j)}(\bar{z}^{-1}) = 0$. By Lemma~\ref{lem:m}{\it{(ii)}} and~\eqref{eq:symSchur}, these points are exactly those in $\calP(f_{\beta}^{(j)})$. This set is $\beta$-independent, so we may take $\beta=1$, and the proof of {\it(ii)} is complete. 
	
	The only difference in the proof of {\it (iii)}, compared to Theorem~\ref{thm:ratio}, is the absence of a direct connection to the function $G_\Delta$ which is why the final expression for the limit in {\it (iii)} have a different form. When deriving {\it (iv)} from {\it (iii)}, we apply~\eqref{eq:symSchur} (instead of~\eqref{eq:GSymm}).
	
The result in~\eqref{eq:ratio_popuc_Gper1} follows from \eqref{eq:asLast} (with $n=kp-j$) together with {\it(i)} and {\it{(iv)}} uniformly on compacts of $\C\setminus(\Lambda_{\beta,t}\cup \calZ(f^{(j-1)}_{1}))$. Similarly, \eqref{eq:ratio_popuc_Gper2} follows by instead writing
	\begin{equation*}
		\frac{\Phi_{n+1,N}^{(\beta)}(z)}{\Phi_{n,N}^{(\beta)}(z)}=
		\frac{\Phi_{n+1,N}^{(\beta)}(z)}{\Phi^*_{n,N}(z)}
		\frac{\Phi^*_{n-1,N}(z)}{\Phi_{n,N}^{(\beta)}(z)}
		\frac{\Phi^*_{n,N}(z)}{\Phi^*_{n-1,N}(z)},
	\end{equation*}
	and arguing as in~\eqref{eq:starPOPUCAs} while applying {\it(i)} and {\it{(iii)}}, uniformly on compacts of $\C\setminus(\Lambda_{\beta,t}\cup \calP(f^{(j-1)}_{1}))$. Since $\calZ(f^{(j-1)}_{1})$ belongs to $\D$ and $ \calP(f^{(j-1)}_{1})$ to $\C\setminus\overline{\D}$, we obtain uniform convergence on compacts of $ \C\setminus \Lambda_{\beta,t}$, as claimed.
\end{proof}

Barrios and L\'{o}pez~\cite[Thm.~2]{BarLop}, using a very different proof, computed the limit of the ratios $\Phi_{n+p}(z)/\Phi_n(z)$. This result can be recovered easily as a corollary of the previous theorem. 

\begin{cor}\label{cor:BarLop}
	Let $\beta\in\T$, $t>0$, and let $\{\alpha_{n,N} \mid n,N\in\N_{0}\}\subset\D$ satisfy~\eqref{eq:almostPeriodic}  with $(\alpha_j)_{j=0}^{p-1}\in\D^p\setminus\{(0,\ldots,0)\}$. 
	Then 
	\begin{equation}
		\lim_{n/N\to t}\frac{\Phi_{n+p,N}^{(\beta)}(z)}{\Phi_{n,N}^{(\beta)}(z)} =
		\lim_{n/N\to t}\frac{\Phi_{n+p,N}(z)}{\Phi_{n,N}(z)}  =
		\lim_{n/N\to t}\frac{\Phi^*_{n+p,N}(z)}{\Phi^*_{n,N}(z)}= G_\Delta(z),
		\label{eq:ratio_asympt_var_periodic}
	\end{equation}
	and the convergence is uniform on compacts of $\C\setminus\Lambda_{\beta,t}$, $\C\setminus(\Lambda_{t}\cup \calZ)$, and $\C\setminus(\Lambda_{t}\cup\calP)$, respectively, where $\calZ\subset\D$ and $\calP\subset\C\setminus\overline{\D}$ are given by 
\[
		\calZ= \bigcup_{j=0}^{p-1} \calZ(f^{(j)}_{1}) 
		\quad\mbox{ and }\quad
		\calP= \bigcup_{j=0}^{p-1} \calP(f^{(j)}_{1}).
\]
\end{cor}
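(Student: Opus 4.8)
The plan is to obtain the corollary by telescoping the one-step ratios from Theorem~\ref{thm:ratioPeriodic}\textit{(v)}, \textit{(iv)}, and \textit{(iii)} over a full period. First I would treat the POPUC case. Writing
\[
\frac{\Phi_{n+p,N}^{(\beta)}(z)}{\Phi_{n,N}^{(\beta)}(z)} = \prod_{i=1}^{p} \frac{\Phi_{n+i,N}^{(\beta)}(z)}{\Phi_{n+i-1,N}^{(\beta)}(z)},
\]
I would set $n = kp - j$ for an appropriate $j\in\{0,\dots,p-1\}$ (so that the indices $n+i$ range over one full period in the $kp-j$ parametrization), apply Theorem~\ref{thm:ratioPeriodic}\textit{(v)} to each factor, and multiply the limits. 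Using the form~\eqref{eq:ratio_popuc_Gper1}, the factors $m_\beta^{(j-1)}/m_\beta^{(j)}$ telescope (recall the stripped matrices are again $p$-periodic, so $m_\beta^{(p)} = m_\beta$, i.e.\ the stripping index is cyclic mod $p$), leaving only a product of terms $\bigl(z - \bar\alpha_{p-j+1}/f_1^{(j-1)}(z)\bigr)$. That residual product is exactly the full-period ratio $\Phi_{n+p,N}(z)/\Phi_{n,N}(z)$ by the analogous telescoping of Theorem~\ref{thm:ratioPeriodic}\textit{(iv)} (or, more directly, one recognizes it via the Szeg\H{o} transfer matrices whose trace defines $\Delta$). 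So the key identification to make is that this period-product of one-step OPUC ratios equals $G_\Delta(z)$.

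To see that, I would invoke the transfer-matrix / discriminant formalism of Section~\ref{ss:periodic}. The product $A(\alpha_{p-1},z)\cdots A(\alpha_0,z)$, acting on the vector $(\Phi_{n,N}, \Phi^*_{n,N})^T$, advances the recursion by one period; its eigenvalues are $z^{p/2}\lambda_{\pm}(z)$ where $\lambda_{\pm} = \tfrac12(\Delta(z) \pm \sqrt{\Delta^2(z)-4})$, and the ratio $\Phi_{n+p,N}/\Phi_{n,N}$ converges to the eigenvalue of larger modulus divided by the appropriate normalization, which is precisely $G_\Delta(z)$ as defined in~\eqref{eq:def_G_Delta} — here one uses $G_\Delta(0)=1$ from~\eqref{eq:ptyG1} to fix the branch and the normalizing constant $\prod_j \rho_j$. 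Concretely, since each one-step ratio limit from Theorem~\ref{thm:ratioPeriodic}\textit{(iv)} is $z - \bar\alpha_{p-j}/f_1^{(j)}(z)$ and the Schur iterates $f_1^{(j)}$ are the entries of the stationary solution of the Schur/Riccati recursion for the periodic matrix, the period-product collapses to the dominant Floquet multiplier; I would verify this either by a direct computation with the $2\times 2$ matrices $A(\alpha,z)$ or by citing the standard periodic-OPUC facts in~\cite[Chap.~11]{simon_opuc2}. Then uniform convergence on compacts follows from uniform convergence of each of the $p$ factors away from the exceptional sets, the exceptional set being the union $\calZ = \bigcup_j \calZ(f_1^{(j)})$ (respectively $\calP$) together with $\Lambda_t$; in the POPUC case no such $\calZ$ or $\calP$ appears because, as noted in the proof of Theorem~\ref{thm:ratioPeriodic}\textit{(v)}, the combination is analytic across $\C\setminus\Lambda_{\beta,t}$.

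For the $\Phi_{n+p,N}^*/\Phi_{n,N}^*$ statement, I would not redo the telescoping but instead apply the symmetry $\Phi_n^*(z) = z^n\overline{\Phi_n(\bar z^{-1})}$ together with the functional equation for $G_\Delta$ analogous to~\eqref{eq:GSymm} — namely $\overline{G_\Delta(\bar z^{-1})} = z^{-p} G_\Delta(z)$, which follows from the reflection symmetry $\Delta(\bar z^{-1}) = \overline{\Delta(z)}$ of the discriminant on circles and the branch choice in~\eqref{eq:def_G_Delta}. This converts the $*$-ratio limit into $z^p \overline{G_\Delta(\bar z^{-1})} = G_\Delta(z)$, with the exceptional set now being $\Lambda_t\cup\calP$ since poles of the Schur iterates (lying in $\C\setminus\overline{\D}$) are where the reflected objects blow up. Finally, one notes that the three limiting functions in~\eqref{eq:ratio_asympt_var_periodic} agree off their respective exceptional sets, which is automatic since $G_\Delta$ is a single well-defined analytic function on $\C\setminus\mathcal{B}_\Delta \supseteq \C\setminus\Lambda_t$ and $\mathcal{B}_\Delta\subseteq\Lambda_t$ by the periodic analogue of Remark~\ref{rem:spec_incl_lam}.

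I expect the main obstacle to be the bookkeeping of the cyclic stripping index and the precise matching of the normalization constant and branch of the square root so that the telescoped product is literally $G_\Delta$ (rather than $G_\Delta$ times a spurious constant or power of $z$); getting~\eqref{eq:ptyG1} and the value at $z=0$ (where all OPUC ratios are $1$ since $\Phi_n(0) = \prod \overline{(-\alpha_{k})}$... more carefully, $\Phi_{n+p,N}(0)/\Phi_{n,N}(0) \to \prod_{j=0}^{p-1}(-\bar\alpha_j)$, which must equal $G_\Delta(0)=1$ only in the free-like normalization — so I would instead anchor the identification at a large $|z|$, where $G_\Delta(z) \sim z^p$ and each one-step ratio $\sim z$) to pin things down is the delicate point. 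Everything else is routine telescoping and an application of the already-proved Theorem~\ref{thm:ratioPeriodic}.
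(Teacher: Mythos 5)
Your starting point — telescope the one-step ratios from Theorem~\ref{thm:ratioPeriodic}\textit{(iii)}--\textit{(v)} over a full period and observe the stripping index is cyclic $\bmod\ p$ — is exactly the paper's first step, and your treatment of the exceptional sets and of the $\Phi^*$ case via the reflection symmetry $\overline{G_\Delta(\bar z^{-1})}=z^{-p}G_\Delta(z)$ is correct. But your proof is incomplete at the crucial identification: you write down the telescoped product $L(z)$ and then say you \emph{would} verify $L=G_\Delta$ by a Floquet/transfer-matrix computation or by citing Chapter~11 of Simon. That step is the content of the corollary; it needs a proof, not a gesture. The transfer-matrix route can be made to work, but it requires diagonalizing $A(\alpha_{p-1},z)\cdots A(\alpha_0,z)$, showing the solution has a nonzero component along the dominant eigenvector, and sorting out the branch and the normalizing factor $\prod_j\rho_j$, none of which you carry out. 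The paper takes a shorter and more elementary route that sidesteps all of this: it observes that $L$ depends only on the periodic sequence (so one may assume actual periodicity in $N$), invokes the known root asymptotics~\eqref{eq:root} for $\Phi_{kp}$, and uses the elementary fact that $x_{k+1}/x_k\to a$ together with $|x_k|^{1/k}\to b$ forces $|a|=b$; this yields $|L(z)|=|G_\Delta(z)|$ on $\D$. Since both $L$ and $G_\Delta$ are analytic and nonvanishing on $\D$ and equal at $z=0$, the maximum modulus principle applied to $L/G_\Delta$ gives $L\equiv G_\Delta$.

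You also make a computational slip that sends you down an unnecessary detour. You claim $\Phi_{n+p,N}(0)/\Phi_{n,N}(0)\to\prod_{j=0}^{p-1}(-\bar\alpha_j)$ and therefore suggest anchoring the normalization at large $|z|$ instead of at $0$. But $\Phi_{n}(0)=-\bar\alpha_{n-1}$, so $\Phi_{n+p,N}(0)/\Phi_{n,N}(0)=\bar\alpha_{n+p-1,N}/\bar\alpha_{n-1,N}\to 1$ in the asymptotically periodic regime, which matches $L(0)=1=G_\Delta(0)$ from~\eqref{eq:ptyG1}. The value you computed, $\prod(-\bar\alpha_j)$, is $\Phi_p(0)$ itself, not a ratio of two shifted polynomials. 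So the $z=0$ anchor works cleanly and is precisely what the paper uses; anchoring at infinity is possible but unnecessary.
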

\begin{proof}
	Taking the product of $p$ consecutive ratios with $j=1,\ldots,p$ in  {\it (iii)}, {\it (iv)}, {\it (v)}, and using periodicity $\mathcal{C}_\beta^{(p)} = \mathcal{C}_\beta$, we obtain that the three limits exist and coincide with the value
	\begin{equation}\label{eq:ratioCommon}
		L(z)=\prod_{j=1}^{p} (1-z \alpha_{p-j+1} f_1^{(j-1)}(z))
		= \prod_{j=1}^{p}(z-\bar\alpha_{p-j+1}{/f_1^{(j-1)}(z)} ),
	\end{equation}
	uniformly on compacts of the corresponding sets. 
	
	Notice that $L(z)$ is fully determined by the sequence $(\alpha_j)_{j=0}^{p-1}$. So without loss of generality we can assume that $\alpha_{kp+j,N}=\alpha_{j}$ for all $k,N\in\N_{0}$ and $j=0,\ldots,p-1$. It is an elementary calculus exercise to show that if a complex sequence $\{x_k\}$ satisfies $x_{k+1}/x_k\to a$ and $|x_k|^{1/k}\to b$, as $k\to\infty$, then $|a|=b$. Applying this to $x_k = \Phi_{kp}(z)$ and using~\eqref{eq:root} with~\eqref{eq:ratioCommon}, we obtain 
	\begin{equation}
		\label{eq:LandG}
		|L(z)|= [\exp(- U_{\nu_\Delta}(z))]^p = |G_\Delta(z)|,
	\end{equation}
	see~\eqref{eq:U_eq_log_G_Delta}.
	
	Now note that $L(z)$ in~\eqref{eq:ratioCommon} is analytic nonvanishing function on $\D$, bearing in mind that $|f_1^{(j-1)}(z)|\le 1$ for Schur functions. Function $G_\Delta$ is analytic on $\D$ and it is also nonvanishing, by~\eqref{eq:LandG}. Finally, $L(0)=G_{\Delta}(0)=1$ by~\eqref{eq:ratioCommon} and~\eqref{eq:ptyG1}. Then applying the maximum modulus principle to $L/G_\Delta$ shows that $L=G_\Delta$ and completes the proof.
\end{proof}

\section{Asymptotic zero density for variable POPUC}\label{sec:var_popuc}

Recall that $\nu_{n,N}^{(\beta)}$ denotes the zero-counting measure of $\Phi_{n,N}^{(\beta)}$, see~\eqref{eq:count-meas_var}. The following theorem identifies its weak limit under the natural condition~\eqref{eq:VerblunskyStrong}. We provide two different proofs.

\begin{thm}\label{thm:popuc_asympt_distr}
	Let $t>0$ and $\alpha:[0,t]\to\overline{\D}$ be Lebesgue  measurable. Suppose further that $\{\alpha_{n,N} \mid n,N\in\N_{0} \}\subset\D$ is such that
	\begin{equation}\label{eq:varyingNevai}
		\lim_{n/N\to s} \alpha_{n,N}=\alpha(s), \,\mbox{ for {a.e.} } s\in[0,t].
	\end{equation}
	Then, for any $\beta\in\T$,
	\begin{equation}\label{eq:weak_limit}
		\wlim_{n/N\to t} \nu_{n,N}^{(\beta)}=\frac{1}{t}\int_{0}^{t}\nu_{|\alpha(s)|}\dd s,
	\end{equation}
	where the measure $\nu_{|\alpha(s)|}$ is given by~\eqref{eq:meas_nu} if $|\alpha(s)|\neq1$, and equals $\delta_{-1}$, otherwise.
\end{thm}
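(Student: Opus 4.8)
\emph{Strategy.} Since every $\Phi_{n,N}^{(\beta)}$ has all its zeros on $\T$, each $\nu_{n,N}^{(\beta)}$ and every weak-$\star$ limit point of $\{\nu_{n,N}^{(\beta)}\}$ is a probability measure supported on the compact set $\T$; such a measure is uniquely determined by its logarithmic potential on $\C\setminus\overline\D$, since the expansion $U_\mu(z)=-\log|z|+\Re\sum_{m\ge1}\frac{1}{m z^m}\int\xi^m\dd\mu(\xi)$ recovers all Fourier coefficients of $\mu$. By compactness of the set of probability measures on $\T$ and weak-$\star$ continuity of $\mu\mapsto\int\log|z-\xi|\dd\mu(\xi)$ for $z\notin\T$, it therefore suffices to prove that, for each fixed $z\in\C\setminus\overline\D$,
\[
	U_{\nu_{n,N}^{(\beta)}}(z)=-\frac1n\log\big|\Phi_{n,N}^{(\beta)}(z)\big|\ \longrightarrow\ -\frac1t\int_0^t\log\big|G_{|\alpha(s)|}(z)\big|\dd s=\frac1t\int_0^t U_{\nu_{|\alpha(s)|}}(z)\dd s=U_{\mu_\star}(z),
\]
as $n/N\to t$, where $\mu_\star$ is the measure on the right-hand side of~\eqref{eq:weak_limit} and we used~\eqref{eq:U_eq_log_G} (with the convention $G_1(z)=z+1$, $\nu_1=\delta_{-1}$, which is consistent).

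\emph{Local ratio asymptotics.} The first ingredient is a pointwise ``localization'' of Theorem~\ref{thm:ratio}: for every $s$ at which~\eqref{eq:varyingNevai} holds (i.e.\ a.e.\ $s\in[0,t]$) and every $z\in\C\setminus\overline\D$,
\[
	g_{k,N}(z):=\log\Big|\frac{\Phi_{k+1,N}^{(\beta)}(z)}{\Phi_{k,N}^{(\beta)}(z)}\Big|\ \longrightarrow\ \log\big|G_{|\alpha(s)|}(z)\big|\qquad\text{as }k/N\to s.
\]
This is obtained by re-running the proof of Theorem~\ref{thm:ratio}\textit{(i)} and \textit{(v)} with the constant $\alpha$ replaced by $\alpha(s)$: in the flip identity the reversed-and-conjugated Verblunsky coefficients are $-\bar\alpha_{k-1-\ell,N}\beta$, and for each fixed $\ell$ one has $(k-1-\ell)/N\to s$, hence $-\bar\alpha_{k-1-\ell,N}\beta\to-\overline{\alpha(s)}\beta$; thus the corresponding spectral measures converge weakly by~\cite[Thm.~1.5.6]{simon_opuc1}, giving $\Phi_{k,N}/\Phi_{k+1,N}^{(\beta)}\to m_{-\overline{\alpha(s)}\beta}$, and the Szeg\H{o}-recurrence bookkeeping of parts \textit{(ii)}--\textit{(v)} yields the stated limit (the relevant exceptional set $\Lambda_{\beta,s}\supseteq\Gamma_{|\alpha(s)|}$ sits in $\T$, so it is avoided on $\C\setminus\overline\D$, and the null set of bad $s$ is $z$-independent). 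The second ingredient is a uniform domination bound: for every compact $K\subset\C\setminus\overline\D$ there is $C_K$ with $|g_{k,N}(z)|\le C_K$ for all $k,N$ and $z\in K$. Indeed, $\Phi_{k+1,N}^{(\beta)}=z\Phi_{k,N}-\bar\beta\Phi_{k,N}^{*}$ and $\Phi_{k,N}=z\Phi_{k-1,N}-\bar\alpha_{k-1,N}\Phi_{k-1,N}^{*}$ together with the Blaschke estimate $|\Phi_{j,N}^{*}(z)/\Phi_{j,N}(z)|<1$ on $\C\setminus\overline\D$ reduce $|\Phi_{k+1,N}^{(\beta)}/\Phi_{k,N}^{(\beta)}|$, up to a factor lying in $[\tfrac{|z|-1}{|z|+1},\tfrac{|z|+1}{|z|-1}]$, to $|\Phi_{k,N}/\Phi_{k-1,N}|=|z|\,\big|1-\bar\alpha_{k-1,N}\Phi_{k-1,N}^{*}(z)/(z\Phi_{k-1,N}(z))\big|\in[\,|z|-1,\,|z|+1\,]$.

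\emph{Telescoping and conclusion.} Setting $\Phi_{0,N}^{(\beta)}\equiv1$ and telescoping,
\[
	-\frac1n\log\big|\Phi_{n,N}^{(\beta)}(z)\big|=-\frac1n\sum_{k=0}^{n-1}g_{k,N}(z)=-\frac{N}{n}\int_{0}^{n/N}g_{\lfloor sN\rfloor,N}(z)\dd s,
\]
where the last identity holds because $s\mapsto g_{\lfloor sN\rfloor,N}(z)$ is constant equal to $g_{k,N}(z)$ on $[k/N,(k+1)/N)$. By the local ratio asymptotics, $g_{\lfloor sN\rfloor,N}(z)\to\log|G_{|\alpha(s)|}(z)|$ for a.e.\ $s\in[0,t]$; by the domination bound this step function is bounded by $C_K$, and $\log|G_{|\alpha(\cdot)|}(z)|\in L^1([0,t])$ for the same reason; so dominated convergence gives $\int_0^t g_{\lfloor sN\rfloor,N}(z)\dd s\to\int_0^t\log|G_{|\alpha(s)|}(z)|\dd s$, while $N/n\to1/t$ and the tail $\int$ over the interval between $t$ and $n/N$ is $O(|n/N-t|)\to0$. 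This yields the potential limit and, by the first paragraph, the theorem.

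\emph{Main obstacle and alternative.} The genuinely substantive points are (a) pinning down the correct formulation and uniformity of the local ratio asymptotics --- ensuring the exceptional $s$-null set is $z$-independent, and recognizing that restricting to $\C\setminus\overline\D$ is precisely what removes the $s$-dependent limiting sets $\Lambda_{\beta,s}$ on $\T$ --- and (b) the uniform-in-$k$ domination bound, since the crude estimate only gives a bound growing linearly in $k$, useless in the Cesàro average; the reduction to the Blaschke-product bound is what makes it work. An alternative proof bypasses ratio asymptotics via the method of moments: $\int\xi^m\dd\nu_{n,N}^{(\beta)}=\tfrac1n\Tr\big((\mathcal{C}_{n,N}^{(\beta)})^m\big)=\tfrac1n\sum_{k}\big((\mathcal{C}_{n,N}^{(\beta)})^m\big)_{kk}$, and since the CMV matrix is banded, the $k$-th diagonal entry of its $m$-th power depends only on the $\alpha_{k',N}$ with $k'$ near $k$ and converges, as $k/N\to s$, to the corresponding quantity for constant coefficients $\alpha(s)$, whose Cesàro average is $\tfrac1t\int_0^t\int\xi^m\dd\nu_{|\alpha(s)|}\dd s$ by~\eqref{eq:nu_n_tends_to_nu_scalar_opuc}, the $O(1)$ boundary corrections being negligible.
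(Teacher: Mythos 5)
Your proof is correct and follows essentially the same route as the paper's first proof: logarithmic potentials expressed via consecutive ratios, the varying ratio asymptotics from Theorem~\ref{thm:ratio}\textit{(v)}, a Lemma~\ref{lem:ratio_bounds}-style uniform Blaschke bound, and dominated convergence --- the only deviation being that you identify the limit by compactness of probability measures on $\T$ together with the Fourier-coefficient expansion of the potential on $\C\setminus\overline\D$, whereas the paper invokes Widom's lemma on $\C\setminus\T$. Your ``alternative'' sketch via $\tfrac1n\Tr\bigl(\mathcal{C}_{n,N}^{(\beta)}\bigr)^m$ is in fact precisely the paper's second proof (method of moments, Theorem~\ref{thm:popuc_asympt_distrGen}).
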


\begin{rem}\label{rem:var_dens_explicit}
	Assuming $\alpha$ is a continuous function on $[0,t]$, then the limiting measure
	\begin{equation}
		\sigma_{t}=\frac{1}{t}\int_{0}^{t}\nu_{|\alpha(s)|}\dd s
		\label{eq:def_sigma_t}
	\end{equation}
	in~\eqref{eq:weak_limit} is an average of the equilibrium measures $\nu_{|\alpha(s)|}$ over $s\in[0,t]$ supported on the arc $\{\ee^{\ii\theta} \mid \theta_{|\alpha^{*}(t)|}\leq \theta\leq 2\pi-\theta_{|\alpha^{*}(t)|}\}$, where
	\[
	|\alpha^{*}(t)|=\min_{s\in[0,t]}|\alpha(s)|
	\]
	{and $\theta_{|\alpha^{*}(t)|}$ given by~\eqref{eq:def_theta}.}
	If, in addition, values of $\alpha$ stays away from~$\T$, i.e., $|\alpha(s)|<1$ for all $s\in[0,t]$, then $\sigma_{t}$ is absolutely continuous with the density
	\begin{equation}
		\frac{\dd\sigma_{t}}{\dd\theta}\left(\ee^{\ii\theta}\right)=\frac{1}{2\pi t}\int_{0}^{t}\frac{\sin\left(\theta/2\right)}{\sqrt{\sin^{2}\left(\theta/2\right)-|\alpha(s)|^{2}}}\,\chi_{\left[\theta_{|\alpha(s)|},2\pi-\theta_{|\alpha(s)|}\right]}(\theta)\dd s,
		\label{eq:density_var_explic}
	\end{equation}
	where $\chi_{[a,b]}$ is the indicator function of an interval $[a,b]$.
\end{rem}


\subsection{First proof based on ratio asymptotics and potential-theoretic argument}\label{sec:FirstProof}

The first proof of Theorem~\ref{thm:popuc_asympt_distr} is based on a determination of the limiting logarithmic potential, which is the approach that was successfully applied in the case of orthogonal polynomials on the real line in~\cite{kuijlaarsvanassche_jat99}. It relies on a general argument from potential theory which is sometimes referred to as Widom's lemma, see~\cite{widom_otaa90,widom_otaa94}. Suppose a sequence of measures $\mu_{n}$ uniformly supported in a compact subset of~$\C$ is given. Denote their logarithmic potentials as
\[
U_{\mu_{n}}(z)={-}\int_{\C}\log|z-\xi|\dd\mu_{n}(\xi), \quad z\in\C\setminus\supp\mu_{n}.
\]
By Widom's lemma, if $U_{\mu_{n}}(z)$ tends to a limiting function $U(z)$, as $n\to\infty$, for a.e. $z\in\C$ (with respect to the two-dimensional Lebesgue measure in $\C$), then $\mu_{n}$ converges weakly to a~measure $\mu$, as $n\to\infty$, and $U$ is the logarithmic potential of~$\mu$. The limiting measure $\mu$ can be determined from $U$ by the formula $2\pi\mu=\Delta U$ which is to be understood in the distributional sense.

The following auxiliary result provides a lower and upper bound for the ratio of two consecutive POPUC which will be needed later.
\begin{lem}\label{lem:ratio_bounds}
	Let $\beta\in\T$ and $0<r\leq R$. Then, for all $z\in\C$ in the annulus $r\leq|z|\leq R$ and all $n\in\N$, one has
	\begin{equation}
		\frac{(r-1)^2}{R+1} < \left|\frac{\Phi_{n+1}^{(\beta)}(z)}{\Phi_{n}^{(\beta)}(z)}\right| < \frac{(R+1)^2}{r-1}, \quad \mbox{ if }\; 1<r\le |z|\le R,
		\label{eq:ratio_unif_bound}
	\end{equation}
	and
	\begin{equation}
		\frac{r^{2}(1-R)^2}{R^{2}(1+r)} < \left|\frac{\Phi_{n+1}^{(\beta)}(z)}{\Phi_{n}^{(\beta)}(z)}\right| < \frac{R^{2}(1+r)^2}{r^{2}(1-R)}, \quad \mbox{ if }\; r\le|z|\le R<1.
		\label{eq:ratio_unif_bound_inside_D}
	\end{equation}
\end{lem}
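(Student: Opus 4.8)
The plan is to express the ratio $\Phi_{n+1}^{(\beta)}/\Phi_n^{(\beta)}$ as a single matrix element of a resolvent, exactly as was done in the proof of Theorem~\ref{thm:ratio}, and then to bound that matrix element using the unitarity of the relevant CMV matrix together with the trivial estimate $\|(z-U)^{-1}\|\le 1/\dist(z,\T)$ valid for any unitary operator $U$. Concretely, I would first reduce to the upper bound in~\eqref{eq:ratio_unif_bound}: by~\eqref{eq:start_id_ratio_asympt_inproof} and Lemma~\ref{lem:relf_id} we have
\[
	\frac{\Phi_{n}^{(\beta)}(z)}{\Phi_{n+1}^{(\beta)}(z)}
	=\langle e_{n},(z-\mathcal{C}_{n+1}^{(\beta)})^{-1}e_{n}\rangle
	=\langle e_{0},(z-\widetilde{\mathcal{C}}_{n+1})^{-1}e_{0}\rangle,
\]
where $\widetilde{\mathcal{C}}_{n+1}$ is the (unitary) CMV matrix with flipped-and-conjugated Verblunsky coefficients appearing in Lemma~\ref{lem:relf_id}. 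Since $\widetilde{\mathcal{C}}_{n+1}$ is unitary, its spectrum lies on $\T$, so for $|z|=\rho\ne 1$ one gets $|\langle e_0,(z-\widetilde{\mathcal{C}}_{n+1})^{-1}e_0\rangle|\le 1/||z|-1| = 1/|\rho-1|$. This immediately yields a lower bound on $|\Phi_{n+1}^{(\beta)}/\Phi_n^{(\beta)}|$ of the form $|\rho-1|$, which for $1<r\le|z|\le R$ gives the constant $r-1$ — better than the stated $(r-1)^2/(R+1)$, so the stated lower bound certainly follows.

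For the matching upper bound one cannot bound $1/|\langle e_0,(z-\widetilde{\mathcal{C}}_{n+1})^{-1}e_0\rangle|$ directly, since that matrix element could a priori be small. The remedy is the symmetry~\eqref{eq:POPUCSymmetry}: writing $\Phi_{n+1}^{(\beta)}(z)=-\bar\beta z^{n+1}\overline{\Phi_{n+1}^{(\beta)}(\bar z^{-1})}$ (and similarly for $\Phi_n^{(\beta)}$) we obtain
\[
	\left|\frac{\Phi_{n+1}^{(\beta)}(z)}{\Phi_{n}^{(\beta)}(z)}\right|
	=|z|\cdot\left|\frac{\Phi_{n+1}^{(\beta)}(\bar z^{-1})}{\Phi_{n}^{(\beta)}(\bar z^{-1})}\right|
	=|z|\cdot\left|\frac{\Phi_{n}^{(\beta)}(\bar z^{-1})}{\Phi_{n+1}^{(\beta)}(\bar z^{-1})}\right|^{-1},
\]
and now the inner ratio is again a diagonal resolvent element of a unitary CMV matrix evaluated at the point $\bar z^{-1}$, whose modulus lies in the annulus $1/R\le|\bar z^{-1}|\le 1/r$. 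Applying the resolvent bound $1/\dist(\bar z^{-1},\T)$ once more gives $|\Phi_{n+1}^{(\beta)}/\Phi_n^{(\beta)}|\le |z|/|\,|\bar z^{-1}|-1\,|\le R/(1-1/R)=R^2/(R-1)$, and since $R\le R+1$ this is dominated by $(R+1)^2/(r-1)$ for $1<r\le R$. The second pair of inequalities~\eqref{eq:ratio_unif_bound_inside_D}, for $r\le|z|\le R<1$, is obtained by the identical two observations: the direct resolvent bound handles one direction, and the symmetry~\eqref{eq:POPUCSymmetry} — which maps the annulus $\{r\le|z|\le R<1\}$ to $\{1/R\le|z|\le 1/r\}$, lying outside $\overline{\mathbb{D}}$ — handles the other, after which one tracks the extra factor $|z|\in[r,R]$ and simplifies.

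The only genuine subtlety — and thus the main point requiring care rather than a deep obstacle — is verifying that the resolvent bound is legitimate, i.e.\ that $z$ (resp.\ $\bar z^{-1}$) is actually off the spectrum of the finite unitary matrix in question, so that the inverse exists and $\Phi_{n+1}^{(\beta)}(z)\ne 0$. This is automatic because the zeros of $\Phi_{n+1}^{(\beta)}$ lie on $\T$ (as recorded after~\eqref{eq:CMV_beta_matrix_abbrev} and again in Section~\ref{subsec:CMV}), so on the open annuli $|z|>1$ and $|z|<1$ both $\Phi_{n}^{(\beta)}$ and $\Phi_{n+1}^{(\beta)}$ are zero-free and the ratio is a well-defined nonzero analytic function; the closed-annulus statements then follow by continuity. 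After that, the whole argument is two lines of resolvent estimates plus one application of the palindromic symmetry~\eqref{eq:POPUCSymmetry}, and the explicit constants are just a matter of bookkeeping — deliberately stated in a slightly lossy form so that a single clean expression covers the whole annulus.
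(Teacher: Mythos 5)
There is a genuine gap, and it lies in the very first step. The identity you quote from~\eqref{eq:start_id_ratio_asympt_inproof} reads
\[
\langle e_{n},\bigl(z-\mathcal{C}_{n+1}^{(\beta)}\bigr)^{-1}e_{n}\rangle
=\frac{\Phi_{n}(z)}{\Phi_{n+1}^{(\beta)}(z)},
\]
with the \emph{orthogonal} polynomial $\Phi_n$ in the numerator, not the paraorthogonal $\Phi_n^{(\beta)}$. (By Cramer's rule this matrix entry equals $\det(z-\mathcal{C}_n)/\det(z-\mathcal{C}_{n+1}^{(\beta)})$, and the top-left $n\times n$ block of $\mathcal{C}_{n+1}^{(\beta)}$ is $\mathcal{C}_n(\alpha_0,\ldots,\alpha_{n-1})$ — it does not see $\beta$ at all.) Your equation
\[
\frac{\Phi_{n}^{(\beta)}(z)}{\Phi_{n+1}^{(\beta)}(z)}=\langle e_{n},\bigl(z-\mathcal{C}_{n+1}^{(\beta)}\bigr)^{-1}e_{n}\rangle
\]
is therefore false, and everything downstream — both the lower bound via $\|(z-U)^{-1}\|\le 1/\dist(z,\T)$ and the upper bound via the $\T$-reflection — estimates the wrong ratio. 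Note the same misreading resurfaces after you apply the symmetry~\eqref{eq:POPUCSymmetry}: the ``inner ratio'' there is again $\Phi_n^{(\beta)}/\Phi_{n+1}^{(\beta)}$, not an $m$-function.

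Even with the identity corrected, the plan only controls one of the two factors one actually needs. Writing
\[
\frac{\Phi_{n+1}^{(\beta)}(z)}{\Phi_{n}^{(\beta)}(z)}
=\frac{\Phi_{n+1}^{(\beta)}(z)}{\Phi_{n}(z)}\cdot\frac{\Phi_{n}(z)}{\Phi_{n}^{(\beta)}(z)},
\]
the first factor is the reciprocal of a genuine $m$-function of the unitary matrix $\mathcal{C}_{n+1}^{(\beta)}$ and can indeed be controlled as you suggest (the trivial resolvent estimate for the upper bound, and, e.g., the positivity of $\Re\bigl(z/(z-\xi)\bigr)$ on $|z|>1$, $|\xi|=1$ for the lower bound). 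But the second factor $\Phi_n/\Phi_n^{(\beta)}$ is \emph{not} a diagonal resolvent entry of any fixed unitary matrix, and no amount of flipping with~\eqref{eq:POPUCSymmetry} turns it into one — that symmetry relates $\Phi_n^{(\beta)}$ to itself across $\T$, it does not trade $\Phi_n$ for $\Phi_n^{(\beta)}$. The paper handles this factor by a different mechanism: from the Szeg\H{o} recurrences~\eqref{eq:recur_opuc} and~\eqref{eq:POPUC},
\[
\frac{\Phi_{n}(z)}{\Phi_n^{(\beta)}(z)} = \frac{z - \bar{\alpha}_{n-1}\,\Phi_{n-1}^{*}(z)/\Phi_{n-1}(z)}{z -\bar{\beta}\,\Phi_{n-1}^{*}(z)/\Phi_{n-1}(z)},
\]
together with the Blaschke-product bound $|\Phi_{n-1}^*/\Phi_{n-1}|\le 1$ on $\C\setminus\overline{\D}$, which gives both a lower and an upper bound for this second factor. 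That step — not a resolvent estimate — is the missing ingredient, and without something playing its role your argument cannot close.
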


\begin{proof}
	Notice that ~\eqref{eq:ratio_unif_bound} together with~\eqref{eq:POPUCSymmetry} imply~\eqref{eq:ratio_unif_bound_inside_D}. Therefore it suffices to prove~\eqref{eq:ratio_unif_bound}.
	
	Fix $\beta\in\T$ and $1<r\leq R$. For any $z\in\C\setminus\overline{\D}$, we may write
	\begin{equation}\label{eq:ratio_bound0}
		\left|\frac{\Phi_{n+1}^{(\beta)}(z)}{\Phi_{n}^{(\beta)}(z)}\right|=
		\left|\frac{\Phi_{n+1}^{(\beta)}(z)}{\Phi_{n}(z)}\right|\times\left|\frac{\Phi_{n}(z)}{\Phi_{n}^{(\beta)}(z)}\right|.
	\end{equation}
	Since $\Phi_n/\Phi_n^*$ is a Blaschke product {one infers, for all $\C\setminus\overline{\D}$, that}
	\begin{equation}\label{eq:ratio_bound1}
		\left| \frac{\Phi_n^*(z)}{\Phi_n(z)} \right|<1.
	\end{equation}
	By the Szeg\H{o} recursion~\eqref{eq:POPUC}, we have
	\[
	\frac{\Phi_{n+1}^{(\beta)}(z)}{\Phi_n(z)} =  z - \bar{\beta}\frac{\Phi_n^*(z)}{\Phi_n(z)}.
	\]
	Hence, by making use of~\eqref{eq:ratio_bound1}, we deduce that
	\begin{equation}\label{eq:ratio_bound2}
		r-1 < \left| \frac{\Phi_{n+1}^{(\beta)}(z)}{\Phi_n(z)} \right|  < R+1, \quad \mbox{ for } r\leq|z|\leq R.
	\end{equation}
	Next, by the Szeg\H{o} recursion~\eqref{eq:recur_opuc} and ~\eqref{eq:POPUC} again,
	\[
		\frac{\Phi_{n}(z)}{\Phi_n^{(\beta)}(z)} = \frac{z - \bar{\alpha}_{n-1}\Phi_{n-1}^{*}(z)\big/\Phi_{n-1}(z)}{z -\bar{\beta}\Phi_{n-1}^{*}(z)\big/\Phi_{n-1}(z)},
	\]
	and hence, using~\eqref{eq:ratio_bound1}, we get
	\begin{equation}\label{eq:ratio_bound4}
		\frac{r-1}{R+1} < \left| \frac{\Phi_{n}(z)}{\Phi_n^{(\beta)}(z)} \right| <\frac{R+1}{r-1}, \quad \mbox{ for } r\leq|z|\leq R.
	\end{equation}
	Finally, equation~\eqref{eq:ratio_bound0} together with bounds~\eqref{eq:ratio_bound2} and~\eqref{eq:ratio_bound4} implies~\eqref{eq:ratio_unif_bound}.
\end{proof}

\begin{proof}[First proof of Theorem~\ref{thm:popuc_asympt_distr}]
	For $z\notin\T$, the logarithmic potential of $\nu_{n,N}^{(\beta)}$ can be written as
	\[
	U_{\nu_{n,N}^{(\beta)}}(z)=-\frac{1}{n}\log|\Phi_{n,N}^{(\beta)}(z)|=-\frac{1}{n}\sum_{k=0}^{n-1}\log\left|\frac{\Phi_{k+1,N}^{(\beta)}(z)}{\Phi_{k,N}^{(\beta)}(z)}\right|\!.
	\]
	Note that the above sum is the Riemann sum for a piece-wise constant function, i.e.
	\begin{equation}
		U_{\nu_{n,N}^{(\beta)}}(z)=-\int_{0}^{1}\log\left|\frac{\Phi_{[ns]+1,N}^{(\beta)}(z)}{\Phi_{[ns],N}^{(\beta)}(z)}\right|\dd s,
		\label{eq:log_pot_as_int}
	\end{equation}
	for $z\notin\T$, where $[x]$ is the integer part of $x\in\R$.
	
	Since $[ns]/N\to st$, as $n/N\to t$, {and taking assumption~\eqref{eq:varyingNevai} into account}, we may apply formula~\eqref{eq:ratio_popuc_G} getting
	\[
	\lim_{n/N\to t}\frac{\Phi_{[ns]+1,N}^{(\beta)}(z)}{\Phi_{[ns],N}^{(\beta)}(z)}=G_{|\alpha(st)|}(z),
	\]
	for all $z\in\C\setminus\T$ and {a.e.} $s\in[0,1]$. Moreover, for $z$ in compacts of $\C\setminus\T$, the argument of the
	logarithm in~\eqref{eq:log_pot_as_int} stays uniformly bounded away from $0$ and $\infty$, as it follows
	from~\eqref{eq:ratio_unif_bound} and~\eqref{eq:ratio_unif_bound_inside_D}.
	Thus, we may apply the Lebesgue Dominated Convergence Theorem in~\eqref{eq:log_pot_as_int} getting
	\begin{equation}   \label{eq:lim_log_pot_inproof}
		\lim_{n/N\to t}U_{\nu_{n,N}^{(\beta)}}(z)
		=-\int_{0}^{1}\log\left|G_{|\alpha(st)|}(z)\right|\dd s
		=-\frac{1}{t}\int_{0}^{t}\log\left|G_{|\alpha(s)|}(z)\right|\dd s,
	\end{equation}
	for all $z\in\C\setminus\T$. 
	Recalling~\eqref{eq:U_eq_log_G}, one can see that  the right-hand side of~\eqref{eq:lim_log_pot_inproof} is equal to  
	\[
	U_{\sigma_{t}}(z)=\frac{1}{t}\int_{0}^{t}U_{\nu_{|\alpha(s)|}}(z)\dd s, \quad z\in\C\setminus\T,
	\]
	the logarithmic potential of the average measure $\sigma_{t}$ defined by~\eqref{eq:def_sigma_t}.
	Now the statement follows from Widom's lemma.
\end{proof}

\subsection{Second proof based on method of moments}\label{sec:SecondProof}

We provide an alternative proof of Theorem~\ref{thm:popuc_asympt_distr} based on the method of moments. This approach goes back to the original paper by {Kac, Murdock, and Szeg\H{o}}~\cite{kms_jrma53}, see also {Bourget, Loya, and McMillan}~\cite{bourget-etal_jjm18}. 
Its advantage, apart from its simplicity, is that it adapts naturally to a variety of other contexts, such as  asymptotically periodic, block, or block asymptotically periodic POPUC and real line settings.

Let us introduce a notation first.
For each $N\in\N_0$, consider a family of two-sided \emph{block} tridiagonal matrices
\begin{equation}\label{eq:tridiagonal}
	J^{(N)}=\begin{pmatrix}
		\ddots & \ddots      & \ddots      & \ddots           & \ddots           & \ddots           &        \\
		\ddots & a^{(N)}_{-2,-2}   & a^{(N)}_{-2,-1}   & 0           & 0           & \ddots           &  \ddots      \\
		\ddots & a^{(N)}_{-1,-2}   & a^{(N)}_{-1,-1}   & a^{(N)}_{-1,0}    & 0           & 0           &   \ddots     \\
		\ddots      & 0           & a^{(N)}_{0,-1}    & a^{(N)}_{0,0}     & a^{(N)}_{0,1}     & 0           & \ddots      \\
		\ddots      & 0           & 0           & a^{(N)}_{1,0}     & a^{(N)}_{1,1}     & a^{(N)}_{1,2}     & \ddots \\
		      & \ddots           & 0           & 0           & \ddots      & \ddots      & \ddots 
	\end{pmatrix},
\end{equation}
where each $a_{j,k}^{(N)}$ ($j,k\in\Z$, $|j-k|\le 1$) is an {$\ell\times\ell$} complex matrix. 
Any finite-band matrix can be written in this form, including CMV matrices; the only distinction is that $J^{(N)}$ is doubly infinite. This choice is merely for convenience: it leads to a simpler expression for the entries of integer powers of $J^{(N)}$, which will play a role in the proof of Theorem~\ref{thm:popuc_asympt_distrGen} below.

We assume that the collection of matrices $\{a_{j,k}^{(N)} \mid j,k\in\Z, |j-k|\le 1, N\in\N_{0}\}$ is uniformly bounded. Then $J^{(N)}$, regarded as an operator on $\ell^{2}(\Z)$, is bounded.

Let $P_n$ be the orthogonal projection in {$\ell^{2}(\Z)$} onto the span of {$\{e_j \mid 0\le j \le n-1\}$}. Denote $J_n^{(N)}=P_n J^{(N)} P_n {\upharpoonright \Ran P_n}$,
i.e., $J_{n}^(N)$ is the $n\times n$ submatrix of $J^{(N)}$ corresponding to the row and column indices $j=0,\dots,n-1$.
Denote further by $\nu_{n,N}$ the eigenvalue counting measure of $J_{n}^{(N)}$, i.e.
\[
\nu_{n,N}=\frac{1}{n}\sum_{j=1}^{n}\delta_{z_{j}},
\]
where $z_{1},\dots,z_{n}$ are the eigenvalues of $J_n^{(N)}$ counted with multiplicity. 

	For three fixed complex $\ell\times\ell$ matrices $\gamma_{-1},\gamma_{0},\gamma_{1}$, let $J(\gamma_{-1},\gamma_{0},\gamma_{1})$ be the tridiagonal block (Laurent) matrix of the form~\eqref{eq:tridiagonal} with $a_{j-1,j}^{(N)}=\gamma_{-1}$, $a_{j,j}^{(N)}=\gamma_{0}$, $a_{j+1,j}^{(N)}=\gamma_{1}$ for all $j\in\Z$. Let $\nu_{n}^{(\gamma_{-1},\gamma_{0},\gamma_{1})}$ be the eigenvalue counting measure of the $n\times n$ truncation $J_{n}^{(N)}$ of $J(\gamma_{-1},\gamma_{0},\gamma_{1})$ as above. The existence of the weak limit of $\nu_{n}^{(\gamma_{-1},\gamma_{0},\gamma_{1})}$ as $n\to\infty$ need not be guaranteed in general. In the next statement, we assume a~weaker condition that the nonnegative integer moments of $\nu_{n}^{(\gamma_{-1},\gamma_{0},\gamma_{1})}$ converge to the corresponding moments of a~probability measure on $\C$.

\begin{thm}\label{thm:popuc_asympt_distrGen}
	Let {$\ell\in\N$, $k\in\N_{0}$,} $t>0$, and let $\gamma_{-1},\gamma_0,\gamma_1:[0,t]\to\C$ be three  measurable $\ell\times\ell$ matrix-valued functions. Suppose that 
	$a_{j,k}^{(N)}$ is a collection of uniformly bounded matrices such that, for each $i\in\{-1,0,1\}$, 
		\begin{equation}\label{eq:limitsAssumptions}
		\lim_{n/N\to s} a_{n+i,n}^{(N)}=\gamma_{i}(s), \quad \mbox{for a.e. } s\in[0,t].
		\end{equation}
		Moreover, assume that, for a.e. $s\in[0,t]$, there exists a measure 
		$\nu^{(s)}$, for which 
\begin{equation}
	\label{eq:lim_mom_const_matr}
	\lim_{n\to\infty}\int z^{k} d\nu_{n}^{(\gamma_{-1}(s),\gamma_0(s),\gamma_1(s))} = \int z^{k} d\nu^{(s)}, \quad k\in\N_0.
\end{equation}
		Then
		\begin{equation}\label{eq:weak_limitGen}
			\wlim_{n/N\to t} \int z^k \dd\nu_{n,N}=\frac{1}{t}\int_{0}^{t} \int z^k \dd\nu^{(s)}\,\dd s, \quad k\in\N_0.
		\end{equation}
\end{thm}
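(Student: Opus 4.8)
The strategy is the classical method of moments: show that for each fixed $k\in\N_0$, the $k$-th moment of $\nu_{n,N}$ converges to the claimed average, and then (if one wants weak convergence of the measures themselves) invoke compactness of supports together with convergence of all moments. The key observation is that the $k$-th moment of $\nu_{n,N}$ is a normalized trace:
\[
\int z^k\,\dd\nu_{n,N} = \frac{1}{n}\Tr\big((J_n^{(N)})^k\big) = \frac{1}{n}\sum_{m=0}^{n-1}\big\langle e_m, (J_n^{(N)})^k e_m\big\rangle.
\]
First I would compare $(J_n^{(N)})^k$ with $P_n (J^{(N)})^k P_n$: since $J^{(N)}$ is banded with bandwidth $\ell$ (in the scalar sense), the diagonal entries $\langle e_m, (J_n^{(N)})^k e_m\rangle$ and $\langle e_m, (J^{(N)})^k e_m\rangle$ agree except for indices $m$ within distance $\sim k\ell$ of the boundary $\{0,n-1\}$, of which there are only $O(k\ell)$ many; by uniform boundedness of the $a_{j,k}^{(N)}$ these boundary contributions are $O(k\ell \cdot C^k)/n \to 0$. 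So it suffices to understand $\frac1n\sum_{m=0}^{n-1}\langle e_m,(J^{(N)})^k e_m\rangle$.

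Next I would localize. The entry $\langle e_m,(J^{(N)})^k e_m\rangle$ is a fixed universal polynomial expression in the finitely many block entries $a^{(N)}_{j,j'}$ with $|j-m|\le k\ell$ — this is exactly the reason the doubly-infinite convention in \eqref{eq:tridiagonal} is convenient. By assumption \eqref{eq:limitsAssumptions}, when $m/N\to s$ each of these entries converges to the corresponding entry of the constant-coefficient matrix $J(\gamma_{-1}(s),\gamma_0(s),\gamma_1(s))$, for a.e.\ $s$. Hence $\langle e_m,(J^{(N)})^k e_m\rangle$ converges, along $m/N\to s$, to the corresponding diagonal entry of $J(\gamma_{-1}(s),\gamma_0(s),\gamma_1(s))^k$, which by translation invariance is the same for every diagonal index and equals $\int z^k\,\dd\nu^{(s)}$ by hypothesis \eqref{eq:lim_mom_const_matr} (indeed for the constant-coefficient Laurent matrix the truncated moments $\frac1n\Tr((J_n)^k)$ are themselves asymptotically the common diagonal entry of $J^k$). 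Writing the average as a Riemann-type sum,
\[
\frac1n\sum_{m=0}^{n-1}\big\langle e_m,(J^{(N)})^k e_m\big\rangle = \int_0^1 \big\langle e_{[ns]},(J^{(N)})^k e_{[ns]}\big\rangle\,\dd s,
\]
and noting $[ns]/N\to st$ as $n/N\to t$, the integrand converges pointwise a.e.\ in $s$ to the diagonal entry of $J(\gamma_{-1}(st),\gamma_0(st),\gamma_1(st))^k$; the integrand is uniformly bounded by $(\sup\|a^{(N)}_{j,j'}\|)^k$, so the Dominated Convergence Theorem yields \eqref{eq:weak_limitGen} after the substitution $s\mapsto s/t$.

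The main obstacle — and the point that needs the most care — is the measurability and integrability needed to apply Dominated Convergence: one must check that $s\mapsto \langle e_0, J(\gamma_{-1}(s),\gamma_0(s),\gamma_1(s))^k e_0\rangle$ is measurable (it is, being a fixed polynomial in the measurable matrix-valued functions $\gamma_i$), and, more delicately, that the set of $s$ on which the a.e.-convergence in \eqref{eq:limitsAssumptions} and \eqref{eq:lim_mom_const_matr} holds is genuinely of full measure \emph{simultaneously} in the sense required for the Riemann-sum limit; this is handled by an Egorov/Fubini-type argument, exactly as in \cite{kuijlaarsvanassche_jat99}. A secondary technical point is to make precise the claim that the truncated moments of the constant-coefficient Laurent matrix converge to the common diagonal entry of $J^k$: this follows since $(J_n)^k$ differs from $P_n J^k P_n$ only in the $O(k\ell)$ boundary rows, the same estimate as above. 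Once \eqref{eq:weak_limitGen} is established for every $k$, tightness (the measures $\nu_{n,N}$ are supported in a fixed disk by uniform boundedness of $J^{(N)}$) upgrades convergence of moments to weak convergence of $\nu_{n,N}$ to the measure whose moments are the right-hand side of \eqref{eq:weak_limitGen}.
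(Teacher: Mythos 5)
Your proposal is correct and follows essentially the same route as the paper's proof: reduce the $k$-th moment to $\tfrac1n\Tr P_n(J^{(N)})^k P_n$ by discarding $O(k)$ boundary terms, expand the diagonal entries as a fixed local polynomial in the block entries, write the sum as a Riemann integral, pass to the limit by Dominated Convergence using the a.e.\ convergence and uniform bound, and then identify the resulting combinatorial expression with $\int z^k\,\dd\nu^{(s)}$ by applying the same computation to the constant-coefficient Laurent matrix. The only stylistic difference is that the paper writes the local polynomial out explicitly as a sum over walk vectors $\mathcal{D}_k$, and the Egorov/Fubini apparatus you invoke is not actually needed -- the change of variable $s\mapsto s/t$ maps a full-measure subset of $[0,t]$ to a full-measure subset of $[0,1]$ directly.
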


\begin{proof}
For $k=0$ the claim is trivial. Suppose $k\in\N$. First note that 
	\begin{equation}\label{eq:traces1}
		\wlim_{n/N\to t} \int z^k \dd\nu_{n,N}=\lim_{n/N\to t}\frac{1}{n} \Tr\left(P_n J^{(N)} P_n\right)^{k}=	\lim_{n/N\to t}\frac{1}{n} \Tr P_n \left(J^{(N)}\right)^{k}  P_n,
	\end{equation}
where the second equality is due the banded structure of $J^{(N)}$. Next we denote
	\[
	\mathcal{D}_{k}=\left\{(d_1,\dots,d_k)\in\{-1,0,1\}^{k} \;\bigg|\; \sum_{s=1}^{k}d_{s}=0 \right\}.
	\]
	Clearly, the set $\mathcal{D}_{k}$ is finite.
	Using the definition of matrix multiplication, we can rewrite the expression on the right-hand side of~\eqref{eq:traces1} as
	\[
	\frac{1}{n}\sum_{j=0}^{n-1} \Tr \sum_{(d_1,\dots,d_{k})\in \mathcal{D}_{k}} a_{j,j+d_1}^{(N)} a_{j+d_1,j+d_1+d_2}^{(N)} \dots a_{j+d_1+\dots+d_{k-1},j}^{(N)}.
	\]
	This expression can be represented as the integral of a piecewise constant function, namely, it is equal to
\[
		\int_0^1 \Tr \sum_{(d_1,\dots,d_{k})\in \mathcal{D}_{k}} a_{[ns],[ns]+d_1}^{(N)} a_{[ns]+d_1,[ns]+d_1+d_2}^{(N)} \dots a_{[ns]+d_1+\dots+d_{k-1},[ns]}^{(N)}\,\dd s.
\]
	Since we assume that the matrices $a_{j,k}^{(N)}$ are uniformly bounded and $[ns]/N\to st$ if $n/N\to t$, we may apply the Lebesgue dominated convergence and we get
	\begin{align}
		\wlim_{n/N\to t} \int z^k \dd\nu_{n,N}&=
		\int_0^1 \Tr \sum_{(d_1,\ldots,d_{k})\in \mathcal{D}_{k}} \gamma_{d_1}(st) \gamma_{d_2}(st) \ldots \gamma_{d_{k}}(st) \,\dd s \nonumber \\
		&=
		\frac{1}{t} \int_0^t \Tr \sum_{(d_1,\ldots,d_{k})\in \mathcal{D}_{k}} \gamma_{d_1}(s) \gamma_{d_2}(s) \ldots \gamma_{d_{k}}(s) \,\dd s.
		\label{eq:traces3}
	\end{align}

	Now, fix $s=s_{0}\in[0,t]$ for which the assumption~\eqref{eq:lim_mom_const_matr} holds and apply the same procedure in the particular case when $J^{(N)}\equiv J(\gamma_{-1}(s_0),\gamma_{0}(s_0),\gamma_{1}(s_0))$, i.e., when $a_{n-1,n}^{(N)} \equiv \gamma_{-1}(s_0)$, $a_{n,n}^{(N)} \equiv \gamma_{0}(s_0)$, $a_{n+1,n}^{(N)} \equiv \gamma_{1}(s_0)$  for all $n\in\Z$ and $N\in\N_{0}$. Then~\eqref{eq:traces3} yields the equality
		\begin{align*}
			\lim_{n\to\infty}\int z^{k} d\nu_{n}^{(\gamma_{-1}(s_0),\gamma_0(s_0),\gamma_1(s_0))}&=	
			\frac{1}{t} \int_0^t \Tr \sum_{(d_1,\ldots,d_{k})\in \mathcal{D}_{k}} \gamma_{d_1}(s_0) \gamma_{d_2}(s_0) \ldots \gamma_{d_{k}}(s_0) \,\dd s \\
			&=\Tr \sum_{(d_1,\ldots,d_{k})\in \mathcal{D}_{k}} \gamma_{d_1}(s_0) \gamma_{d_2}(s_0) \ldots \gamma_{d_{k}}(s_0).
		\end{align*}
		By~\eqref{eq:lim_mom_const_matr}, we get
		\[
		\int z^{k} d\nu^{(s_0)}=\Tr \sum_{(d_1,\ldots,d_{k})\in \mathcal{D}_{k}} \gamma_{d_1}(s_0) \gamma_{d_2}(s_0) \ldots \gamma_{d_{k}}(s_0),
		\]
		which holds true for a.e. $s_0\in[0,t]$. Substituting this expression into \eqref{eq:traces3} yields \eqref{eq:weak_limitGen}, completing the proof.
\end{proof}

\begin{proof}[Second proof of Theorem~\ref{thm:popuc_asympt_distr}]
	We apply Theorem~\ref{thm:popuc_asympt_distrGen} with $\ell=2$ and 
	$$
	J^{(N)}=\bm{0}\oplus\mathcal{C}^{(\beta)}_{n,N}\oplus\bm{0},
	$$
	where $\mathcal{C}_{n,N}^{(\beta)}=\mathcal{C}_n(\alpha_{0,N},\ldots,\alpha_{n-2,N},\beta)$ is our finite $n\times n$ unitary CMV matrix (see \eqref{eq:POPUCdef}--\eqref{eq:CMV_beta_matrix_abbrev}) acting on $\Ran P_n$; the first zero matrix acts on $\ell^{2}(-\N)$ and the second on $\ell^2(\{n,n+1,\ldots\})$. By the assumption~\eqref{eq:varyingNevai} and~\eqref{eq:CMV}, we obtain the existence, for a.e. $s\in[0,t]$, of limits~\eqref{eq:limitsAssumptions} with
	\begin{equation*}
		\gamma_{-1}(s)
		\!=\!\begin{pmatrix}
				0 & \overline{\alpha(s)}\rho(s) \\
				0 & \rho(s)^2
			\end{pmatrix}\!,
		\;
	\gamma_{0}(s)
	\!=\!\begin{pmatrix}
	-|\alpha(s)|^2 & \overline{\alpha(s)}\rho(s) \\
		- {\alpha(s)}\rho(s) & -|\alpha(s)|^2
	\end{pmatrix}\!,
	\;
	\gamma_{1}(s)
	\!=\!\begin{pmatrix}
		\rho(s)^2 & 0 \\
		- {\alpha(s)}\rho(s) & 0
	\end{pmatrix}\!,
	\end{equation*}
	where $\rho(s)=\sqrt{1-|\alpha(s)|^2}$. 
	
	We now need to check the convergence of moments~\eqref{eq:lim_mom_const_matr} of eigenvalue counting measure for the limiting matrix $J(\gamma_{-1}(s),\gamma_0(s),\gamma_1(s))$, which is the two-sided CMV matrix with constant coefficients $\alpha_n=\alpha(s)$, $n\in\Z$. 
	
To this end, introduce the $n\times n$ CMV matrix $\mathcal{C}_n^{(\beta,s)} = \mathcal{C}_n(\alpha(s),\ldots,\alpha(s),\beta)$. For each $n\in\N$, $P_nJ(\gamma_{-1}(s),\gamma_0(s),\gamma_1(s))P_n$ and $P_n\mathcal{C}_n^{(\beta,s)}  P_n$ differ only in 4 entries (2 in the top-left, and 2 at the bottom right), and therefore the nonegative integer moments of their eigenvalue counting measures are asymptotically equal. But the zero counting measure for $\mathcal{C}_n^{(\beta,s)}$ exists and is equal to $\nu_{|\alpha(s)|}$, see Section~\ref{sect:potential}. 
This establishes that~\eqref{eq:lim_mom_const_matr} holds with $\nu^{(s)}$ replaced by $\nu_{|\alpha(s)|}$. Theorem~\ref{thm:popuc_asympt_distrGen} then applies showing
%
		\begin{equation}
			\wlim_{n/N\to t}\int z^{k}\dd\nu_{n,N}^{(\beta)}=\frac{1}{t}\int_{0}^{t}\int z^{k}\dd\nu_{|\alpha(s)|} \dd s,
			\label{eq:wlim_mom_k_pos}
		\end{equation}
for all $k\in\N_{0}$.
		
Since measures $\nu_{n,N}^{(\beta)}$ as well as $\nu_{|\alpha(s)|}$ are probability measures supported in $\T$, equality~\eqref{eq:wlim_mom_k_pos} extends to all $k\in\Z$ by the complex conjugation. Finally, the weak convergence~\eqref{eq:weak_limit} follows by the Stone--Weierstrass theorem applied to the Laurent polynomials on the unit circle.
\end{proof}

\subsection{Asymptotically periodic variable Verblunsky coefficients}\label{subsec:distr_asympt_popuc_periodic}

Here we establish a generalization of Theorem~\ref{thm:popuc_asympt_distr} to asymptotically periodic setting. Both approaches presented earlier, the one based on the limiting logarithmic potential and the moment method, extend naturally to this framework.

\begin{thm}\label{thm:asympt_distr_popuc_periodic}
	Let $t>0$, $p\in\N$, and $\alpha_{0},\dots,\alpha_{p-1}:[0,t]\to\D$ be measurable functions. Suppose further that $\{\alpha_{n,N} \mid n,N\in\N_{0} \}\subset\D$ is such that, for all $j\in\{0,1,\dots,p-1\}$,
	\begin{equation}\label{eq:ratioAgain}
		\lim_{kp/N\to s} \alpha_{kp+j,N}=\alpha_{j}(s), \,\mbox{ for {a.e.} } s\in[0,t].
	\end{equation}
	Then, for any $\beta\in\T$, one has
	\begin{equation}\label{eq:weak_limit_popuc_periodic}
		\wlim_{n/N\to t} \nu_{n,N}^{(\beta)}=\frac{1}{t}\int_{0}^{t}\nu_{\Delta_{s}}\dd s,
	\end{equation}
	where the measure $\nu_{\Delta_{s}}$ is as in~\eqref{eq:def_nu_periodic} and~\eqref{eq:def_discr} with
	$\alpha_{j}$ replaced by $\alpha_{j}(s)$ for all $j\in\{0,1,\dots p-1\}$.
\end{thm}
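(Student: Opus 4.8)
The plan is to follow the first proof of Theorem~\ref{thm:popuc_asympt_distr}, replacing the single-step ratio asymptotics of Theorem~\ref{thm:ratio}\textit{(v)} by the $p$-step block asymptotics of Corollary~\ref{cor:BarLop}, and to finish via Widom's lemma. Fix a compact $K\subset\C\setminus\T$. Since all zeros of $\Phi^{(\beta)}_{n,N}$ lie on $\T$, the potential $U_{\nu_{n,N}^{(\beta)}}(z)=-\tfrac1n\log|\Phi_{n,N}^{(\beta)}(z)|$ is finite on $K$; writing $n=mp+r$ with $0\le r<p$ and telescoping in blocks of length $p$ gives
\[
U_{\nu_{n,N}^{(\beta)}}(z)=-\frac1{n}\sum_{k=1}^{m-1}\log\bigg|\frac{\Phi_{(k+1)p,N}^{(\beta)}(z)}{\Phi_{kp,N}^{(\beta)}(z)}\bigg|+O\!\left(\tfrac1n\right),\qquad z\in K,
\]
where the $O(1/n)$ absorbs $\tfrac1n\log|\Phi^{(\beta)}_{p,N}(z)|$ together with the at most $p-1$ leftover consecutive-ratio factors, all of which are bounded on $K$ by Lemma~\ref{lem:ratio_bounds} (split $K$ into its parts inside and outside $\D$). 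Since $\tfrac1n\sum_{k=1}^{m-1}=\tfrac1p\cdot\tfrac1m\sum_{k=1}^{m-1}+O(\tfrac1n)$, the block sum is a Riemann sum for the piecewise-constant integrand $s\mapsto\log|R_{k(s),N}(z)|$ on $[0,1]$, where $R_{k,N}(z)=\Phi_{(k+1)p,N}^{(\beta)}(z)/\Phi_{kp,N}^{(\beta)}(z)$ and $k(s)p/N\to st$ as $n/N\to t$; being a product of $p$ consecutive POPUC ratios, each $R_{k,N}$ stays bounded and bounded away from $0$ on $K$, uniformly in $k$ and in the degree, again by Lemma~\ref{lem:ratio_bounds}.

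Next I would take $n/N\to t$ inside the Riemann integral by dominated convergence. Fix $s\in[0,1]$ for which $\sigma:=st$ is a point where hypothesis~\eqref{eq:ratioAgain} holds; this excludes only a Lebesgue-null set of $s$. As $k(s)p/N\to\sigma$, the coefficients entering $R_{k(s),N}$ obey the asymptotically periodic condition~\eqref{eq:almostPeriodic} at rate $\sigma$ with limits $\alpha_j(\sigma)\in\D$. If $(\alpha_j(\sigma))_{j=0}^{p-1}\neq(0,\dots,0)$, Corollary~\ref{cor:BarLop} gives $R_{k(s),N}(z)\to G_{\Delta_\sigma}(z)$ uniformly on $K$. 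In the remaining case, where all $\alpha_j(\sigma)$ vanish, one first notes that then $\alpha_{n,N}\to0$ as $n/N\to\sigma$ (any index near $\sigma N$ reads $kp+j$ with $kp/N\to\sigma$), so each of the $p$ factors of $R_{k(s),N}(z)$ tends to $G_0(z)$ by Theorem~\ref{thm:ratio}\textit{(v)}, hence $R_{k(s),N}(z)\to G_0(z)^p$, which is precisely $G_{\Delta_\sigma}(z)$ for the free discriminant $\Delta_\sigma$. In all cases $\log|R_{k(s),N}(z)|\to\log|G_{\Delta_{st}}(z)|=-p\,U_{\nu_{\Delta_{st}}}(z)$ for a.e.\ $s\in[0,1]$ by~\eqref{eq:U_eq_log_G_Delta}, and dominated convergence with the uniform bounds yields
\[
\lim_{n/N\to t}U_{\nu_{n,N}^{(\beta)}}(z)=\int_{0}^{1}U_{\nu_{\Delta_{st}}}(z)\,\dd s=\frac1t\int_{0}^{t}U_{\nu_{\Delta_s}}(z)\,\dd s=U_{\sigma_t}(z),\qquad z\in\C\setminus\T,
\]
where $\sigma_t=\tfrac1t\int_0^t\nu_{\Delta_s}\,\dd s$ and the last equality is Fubini (all $\nu_{\Delta_s}$ are supported on $\T$, and $s\mapsto\nu_{\Delta_s}$ is weakly measurable by measurability of the $\alpha_j$ and the explicit formulas~\eqref{eq:def_discr},~\eqref{eq:def_nu_periodic}). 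Since $\T$ carries no planar Lebesgue mass and the $\nu_{n,N}^{(\beta)}$ are probability measures supported on $\T$, Widom's lemma gives $\wlim_{n/N\to t}\nu_{n,N}^{(\beta)}=\sigma_t$, which is~\eqref{eq:weak_limit_popuc_periodic}. If one prefers the discriminant~\eqref{eq:def_discr} to carry no branch cut, replace $p$ by $2p$ throughout, which affects nothing since a $p$-periodic sequence is $2p$-periodic and $\nu_{\Delta_s}$ is unchanged.

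A second, independent route runs through the method of moments exactly as in Theorem~\ref{thm:popuc_asympt_distrGen}: take $\ell=2p$ and $J^{(N)}=\bm{0}\oplus\mathcal{C}_{n,N}^{(\beta)}\oplus\bm{0}$ regrouped into $2p\times2p$ blocks, so that~\eqref{eq:ratioAgain} supplies the block limits $\gamma_i(s)$; the hypothesis~\eqref{eq:lim_mom_const_matr} for the limiting constant-block matrix — the genuinely $p$-periodic two-sided CMV matrix with coefficients $\alpha_j(s)$ — reduces, after discarding its finitely many boundary entries, to the classical convergence~\eqref{eq:nu_n_tends_to_nu_scalar_opuc} of the zero-counting measure of periodic POPUC to $\nu_{\Delta_s}$, and Theorem~\ref{thm:popuc_asympt_distrGen} then delivers~\eqref{eq:weak_limit_popuc_periodic} after extending to negative moments by conjugation and applying Stone--Weierstrass. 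I do not expect a deep obstacle in either approach: the one point genuinely requiring care is the pointwise-in-$s$ limit of the block ratio $R_{k(s),N}(z)$ with a domination uniform in the polynomial degree — which is where Corollary~\ref{cor:BarLop} and Lemma~\ref{lem:ratio_bounds} do the real work — and, within it, the degenerate layer $\{s:\alpha_0(s)=\dots=\alpha_{p-1}(s)=0\}$, which lies outside the nondegeneracy hypothesis of Corollary~\ref{cor:BarLop} and must be routed through Theorem~\ref{thm:ratio}\textit{(v)}.
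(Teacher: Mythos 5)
Your first route is exactly the paper's first proof: telescope the potential over $p$-blocks, dominate with Lemma~\ref{lem:ratio_bounds}, pass to the limit via Corollary~\ref{cor:BarLop}, and conclude with Widom's lemma; your second route is the paper's method-of-moments proof verbatim. The only genuine addition you make is to observe that Corollary~\ref{cor:BarLop} is stated under the nondegeneracy hypothesis $(\alpha_j(\sigma))\neq(0,\dots,0)$ and to patch the degenerate layer $\{\sigma:\alpha_0(\sigma)=\dots=\alpha_{p-1}(\sigma)=0\}$ by instead applying Theorem~\ref{thm:ratio}\textit{(v)} factor by factor, which gives $R_{k(s),N}\to G_0^p=G_{\Delta_\sigma}$ there --- a gap the paper's own argument quietly steps over, so this is a worthwhile (if minor) tightening rather than a different method.
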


\begin{proof}[First proof of Theorem~\ref{thm:asympt_distr_popuc_periodic}]
	Recall that under the assumption~\eqref{eq:ratioAgain}, POPUC satisfy the ratio asymptotics~\eqref{eq:ratio_asympt_var_periodic}.
	For $n=kp+j$ with $j\in\{0,1,\dots,p-1\}$ and $\beta\in\T$, one obviously has
	\[
	\Phi_{kp+j,N}^{(\beta)}(z)=\Phi_{j,N}^{(\beta)}(z)\prod_{i=0}^{k-1}\frac{\Phi_{(i+1)p+j,N}^{(\beta)}(z)}{\Phi_{ip+j,N}^{(\beta)}(z)},
	\]
	therefore the logarithmic potential of the zero-counting measure $\nu_{n,N}^{(\beta)}$ of $\Phi_{n,N}^{(\beta)}$ can be written as
	\begin{equation}
		U_{\nu_{n,N}^{(\beta)}}(z)=-\frac{1}{n}\log\left|\Phi_{j,N}^{(\beta)}(z)\right|-{\frac{k}{n}}\int_{0}^{1}\log\left|\frac{\Phi_{([ks]+1)p+j,N}^{(\beta)}(z)}{\Phi_{[ks]p+j,N}^{(\beta)}(z)}\right|\dd s.
		\label{eq:log_pot_as_int_periodic}
	\end{equation}
	By using the assumptions of Theorem~\ref{thm:asympt_distr_popuc_periodic}, limit relation~\eqref{eq:ratio_asympt_var_periodic}, and Lemma~\ref{lem:ratio_bounds}, we apply the Lebesgue dominated convergence in~\eqref{eq:log_pot_as_int_periodic} and find
	\[
	\lim_{n/N\to t}U_{\nu_{n,N}^{(\beta)}}(z)=-\frac{1}{p}\int_{0}^{1}\log\left|G_{\Delta_{st}}(z)\right|\dd s=-\frac{1}{pt}\int_{0}^{t}\log\left|G_{\Delta_{s}}(z)\right|\dd s.
	\]
	The last formula yields the convergence of the logarithmic potentials
	\[
	\lim_{n/N\to t}U_{\nu_{n,N}^{(\beta)}}(z)=U_{\sigma_{t}}(z),
	\]
	for all $z\in\C\setminus\T$, where measure $\sigma_{t}$ is the average measure
\[
		\sigma_{t}=\frac{1}{t}\int_{0}^{t}\nu_{\Delta_{s}}\dd s.
\]
	Widom's lemma now implies the the weak convergence of corresponding measures.
\end{proof}

\begin{proof}[Second proof of Theorem~\ref{thm:asympt_distr_popuc_periodic}]
	The proof proceeds analogously to the second proof of Theorem~\ref{thm:popuc_asympt_distr} using Theorem~\ref{thm:popuc_asympt_distrGen} with $\ell=2p$ and~\eqref{eq:nu_n_tends_to_nu_scalar_opuc}.
\end{proof}
\begin{rem}
	Similarly as in~\eqref{eq:meas_nu}, the density of measure~\eqref{eq:def_nu_periodic} can be expressed in a~more explicit form if $p=2$. Namely, one has
	\[
	\supp\nu_{\Delta}=\left\{\ee^{\ii\theta}\mid \theta\in[\theta^{(+)},\theta^{(-)}]\cup[-\theta^{(-)},-\theta^{(+)}]\right\},
	\]
	where $0\leq\theta^{(+)}<\theta^{(-)}\leq\pi$ are defined by
	\begin{equation}
		\theta^{(\pm)}=\arccos\left(\pm\sqrt{(1-|\alpha_{0}|^{2})(1-|\alpha_{1}|^{2})}-\Re\alpha_{0}\bar{\alpha}_{1}\right).
		\label{eq:def_theta_plus_minus}
	\end{equation}
	The density of~$\nu_{\Delta}$ reads
	\[
	\frac{\dd\nu_\Delta}{\dd\theta}\left(\ee^{\ii\theta}\right)=\frac{1}{2\pi}\frac{|\sin\theta|}{\sqrt{\left(\cos\theta^{(+)}-\cos\theta\right)\left(\cos\theta-\cos\theta^{(-)}\right)}},
	\]
	for $\pm\theta\in[\theta^{(+)},\theta^{(-)}]$; see~\cite[Ex.~11.1.5]{simon_opuc2}.
	Then similarly as in Remark~\ref{rem:var_dens_explicit}, we can deduce the 2-periodic generalization of the formula for the density~\eqref{eq:density_var_explic} {of the average measure~\eqref{eq:weak_limit_popuc_periodic} at the point $\ee^{\ii\theta}$ which reads}
	\[
	\frac{1}{2\pi t}\int_{0}^{t}\frac{|\sin\theta|}{\sqrt{\left(\cos\theta^{(+)}_{s}-\cos\theta\right)\left(\cos\theta-\cos\theta^{(-)}_{s}\right)}}\chi_{\left[\theta_{s}^{(+)},\theta_{s}^{(-)}\right]\cup\left[-\theta_{s}^{(-)},-\theta_{s}^{(+)}\right]}(\theta)\,\dd s,
	\]
	for $\theta\in(-\pi,\pi)$, where $\theta_{s}^{(\pm)}$ is defined by~\eqref{eq:def_theta_plus_minus} with $\alpha_{0}$ and $\alpha_{1}$ replaced by $\alpha_{0}(s)$ and $\alpha_{1}(s)$.
\end{rem}

\section{Asymptotic zero density for variable OPUC}\label{sec:var_opuc}

In this section, we extend the ideas of Theorem~\ref{thm:popuc_asympt_distr} to the setting of zeros of OPUC.
Recall from Section~\ref{sec:root_asym} that, even in the non-varying case, the behavior of OPUC zeros is more intricate than that of POPUC. In particular, when $\alpha_n \to 0$, the rate of convergence becomes important. {More concretely, we prove a variable generalization of the fact that the asymptotic distribution of zeros of OPUC is given by the normalized Lebesgue measure on $\T$ if $\alpha_{n}\to0$ with $|\alpha_{n}|^{1/n} \to 1$. On the other hand, if the latter condition is altered to    $|\alpha_n|^{1/n} \to A\in[0,1)$ as in the Mhaskar--Saff theorem, we do not have a variable generalization. This remains an open problem (see Remark~\ref{rem:open_prob} below).}

To establish a partial OPUC analogue of Theorem~\ref{thm:popuc_asympt_distr}, we follow the operator-theoretic approach of Simon~\cite[Sec.~8.2]{simon_opuc1}, which employs CMV matrices and the notion of balayage, as described in Sections~\ref{subsec:balay} and~\ref{subsec:distr_asympt_opuc}.


\subsection{Balayage}\label{subsec:balay}

First, recall that for any probability measure $\mu$ with support in $\overline{\D}$, there exists a unique probability measure $\mathcal{P}(\mu)$ supported on $\T$ that satisfies
\begin{equation}
	\int_{\overline{\D}} z^k \dd\mu(z) = \int_{0}^{2\pi} \ee^{ik\theta} \dd\mathcal{P}(\mu)(\ee^{\ii\theta}), \quad k\in\N_{0}.
	\label{eq:balay_moments_eq}
\end{equation}
The passage from $\mu$ to $\mathcal{P}(\mu)$ is called balayage. The balayage measure $\mathcal{P}(\mu)$ can be also defined by the formula
\[
\int_{\T}f(z)\dd\mathcal{P}(\mu)(z)=\int_{\overline{\D}}\mathcal{P}_{*}f(z)\dd\mu(z)
\]
for all continuous functions $f$ on $\T$, where
\[
\mathcal{P}_{*}f(r\ee^{\ii\theta})=\begin{cases}\displaystyle
	\frac{1}{2\pi}\int_{0}^{2\pi}P_{r}(\theta-t)f(\ee^{\ii t})\dd t,& \; \mbox{ if } r<1,\\[12pt]
	f(\ee^{\ii\theta}),& \; \mbox{ if } r=1,
\end{cases}
\]
and
\[
P_{r}(t)=\sum_{n=-\infty}^{\infty}r^{|n|}\ee^{\ii n t}=\frac{1-r^{2}}{1-2r\cos t+r^{2}}
\]
is the Poisson kernel. For more details and the proof, see~\cite[Prop 8.2.2]{simon_opuc1}.

Recall that $\nu_{n,N}$ denotes the zero-counting measure of $\Phi_{n,N}$, see~\eqref{eq:count-meas_var}.

\begin{thm}\label{thm:opuc_balayage}
	Let $t>0$ and $\alpha:[0,t]\to\overline{\D}$ be measurable. Suppose further that $\{\alpha_{n,N} \mid n,N\in\N_{0} \}\subset\D$ is such that
	\[
	\lim_{n/N\to s} \alpha_{n,N}=\alpha(s), \,\mbox{ for {a.e.} } s\in[0,t].
	\]
	Then
	\begin{equation}\label{eq:weak_limit_opuc_balayage}
		\wlim_{n/N\to t} \mathcal{P}(\nu_{n,N})=\frac{1}{t}\int_{0}^{t}\nu_{|\alpha(s)|}\dd s,
	\end{equation}
	where $\nu_{|\alpha(s)|}$ is given by~\eqref{eq:meas_nu} if $|\alpha(s)|\neq1$, and equals $\delta_{-1}$, otherwise.
\end{thm}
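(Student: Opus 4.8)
The plan is to reduce the claim to the moment convergence for POPUC already established in~\eqref{eq:wlim_mom_k_pos}, by replacing the non-unitary cut-off CMV matrix $\mathcal{C}_{n,N}$ by the unitary paraorthogonal one $\mathcal{C}_{n,N}^{(\beta)}$ at the cost of a perturbation that is negligible after normalising by $n$, and then transporting the conclusion through balayage. First I would note that, by~\eqref{eq:charact}, $\Phi_{n,N}$ is the characteristic polynomial of $\mathcal{C}_{n,N}=\mathcal{C}_n(\alpha_{0,N},\dots,\alpha_{n-1,N})$, so that
\[
\int z^k\,\dd\nu_{n,N}=\frac1n\Tr\,\mathcal{C}_{n,N}^{\,k},\qquad k\in\N_0 .
\]
Also $\nu_{n,N}$ is supported in $\D$ (each $\mu_N$ is nontrivial since $\alpha_{n,N}\in\D$), so the balayage $\mathcal{P}(\nu_{n,N})$ is well defined.

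Next comes the comparison step. The matrices $\mathcal{C}_{n,N}$ and $\mathcal{C}_{n,N}^{(\beta)}=\mathcal{C}_n(\alpha_{0,N},\dots,\alpha_{n-2,N},\beta)$ are built from the same data except for the last Verblunsky coefficient, and an inspection of~\eqref{eq:CMV} shows that this coefficient occurs only within a block of fixed size in the lower-right corner. Hence $R_{n,N}:=\mathcal{C}_{n,N}-\mathcal{C}_{n,N}^{(\beta)}$ has rank bounded independently of $n$ and $N$ and operator norm at most a universal constant, while $\|\mathcal{C}_{n,N}\|\le1$ and $\|\mathcal{C}_{n,N}^{(\beta)}\|=1$. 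Expanding $\mathcal{C}_{n,N}^{\,k}-\big(\mathcal{C}_{n,N}^{(\beta)}\big)^{k}$ as a telescoping sum of $k$ products, each containing exactly one factor $R_{n,N}$ and otherwise factors of norm $\le1$, and using $|\Tr(AB)|\le(\operatorname{rank} A)\,\|A\|\,\|B\|$, I get $|\Tr\,\mathcal{C}_{n,N}^{\,k}-\Tr\,(\mathcal{C}_{n,N}^{(\beta)})^{k}|\le C_k$ with $C_k$ independent of $n$ and $N$. Dividing by $n$, letting $n/N\to t$, and invoking~\eqref{eq:wlim_mom_k_pos},
\[
\lim_{n/N\to t}\int z^k\,\dd\nu_{n,N}=\frac1t\int_0^t\int z^k\,\dd\nu_{|\alpha(s)|}\,\dd s,\qquad k\in\N_0 .
\]

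Finally I would pass to the balayage measures. By~\eqref{eq:balay_moments_eq}, $\int z^k\,\dd\mathcal{P}(\nu_{n,N})=\int z^k\,\dd\nu_{n,N}$ for all $k\in\N_0$, so the nonnegative moments of $\mathcal{P}(\nu_{n,N})$ have the limit just displayed. Since $\mathcal{P}(\nu_{n,N})$ is supported on $\T$, its moments with negative $k$ are the complex conjugates of those with positive $-k$; the same holds for $\sigma_t=\frac1t\int_0^t\nu_{|\alpha(s)|}\,\dd s$, since each $\nu_{|\alpha(s)|}$ is supported on $\T$. Therefore $\int z^k\,\dd\mathcal{P}(\nu_{n,N})\to\int z^k\,\dd\sigma_t$ for every $k\in\Z$, and since all of these are probability measures on the compact set $\T$, the density of Laurent polynomials in $C(\T)$ upgrades moment convergence to the weak$^\star$ convergence~\eqref{eq:weak_limit_opuc_balayage}.

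The main, if routine, obstacle is making the comparison step precise: one must verify that replacing the last Verblunsky coefficient by $\beta$ alters only a bounded number of entries of the cut-off CMV matrix, so that the change in the normalised trace of each fixed power is $O(1/n)$. Everything else is bookkeeping with the defining moment identity of balayage together with the symmetry of measures supported on $\T$.
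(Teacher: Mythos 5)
Your proposal is correct and follows essentially the same route as the paper's proof: reduce to positive moments via the balayage identity \eqref{eq:balay_moments_eq}, replace $\mathcal{C}_{n,N}$ by the unitary $\mathcal{C}_{n,N}^{(\beta)}$ at $o(1)$ cost in the normalised trace of each fixed power, invoke the POPUC moment result, and finish by conjugation and Stone--Weierstrass. The only difference is that you flesh out the comparison step with an explicit rank/telescoping bound, whereas the paper merely remarks that the two matrices differ only in the last two rows; the substance is the same.
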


\begin{proof}
	Since a negative moment of a positive measure supported on $\T$ is the complex-conjugate of the respective positive moment and Laurent polynomials are dense in $C(\T)$ by the Stone--Weierstrass theorem, it is sufficient to show the convergence of the positive moments of the measures in~\eqref{eq:weak_limit_opuc_balayage}. By~\eqref{eq:balay_moments_eq}, the $k$-th moment of $\mathcal{P}(\nu_{n,N})$ equals
	\[
	\int_{0}^{2\pi} \ee^{\ii k\theta} \dd\mathcal{P}(\nu_{n,N})\big(\ee^{\ii\theta}\big) = \int_{\overline{\D}} z^{k} \dd\nu_{n,N}(z) = \frac{1}{n} \Tr\left(\mathcal{C}_{n,N}\right)^{k}
	= \frac{1}{n} \Tr\left(\mathcal{C}^{(\beta)}_{n,N}\right)^{k}+o(1),
	\]
	{as $n/N\to t$. The last equality holds since matrices} $\mathcal{C}^{(\beta)}_{n,N}$ and $\mathcal{C}_{n,N}$ differ only in the last two rows. {Since 
		\[
		\frac{1}{n} \Tr\left(\mathcal{C}^{(\beta)}_{n,N}\right)^{k}=\int_{\T} z^{k}\dd\nu_{n,N}^{(\beta)}(z),
		\]
		it suffices to apply Theorem~\ref{thm:popuc_asympt_distr} to complete the proof.}
\end{proof}

\subsection{Asymptotic zero distribution of OPUC with variable Verblunsky coefficients}\label{subsec:distr_asympt_opuc}

If the limiting function $\alpha:[0,t]\to\overline{\D}$ does not vanish at the right edge $t$
or if it does but in a certain moderate fashion, Theorem~\ref{thm:popuc_asympt_distr} extends to variable OPUC. To prove this claim we will need the following auxiliary result. 

\begin{lem}\label{lem:balay_meas}
	Let $\Phi_{n}$ be a sequence of OPUC and $\nu_{n}$ be the corresponding sequence of zero-counting measures. Suppose further that, for any $\delta\in(0,1)$, it holds
	\begin{equation}
		\lim_{n\to\infty}{\frac{\sharp\{j\mid|z_{j}|\leq1-\delta\}}{n}}=0,
		\label{eq:lem_zer_assum}
	\end{equation}
	where $\sharp M$ is the cardinality of a set $M$ and $z_{1},\dots,z_{n}$ are the roots of $\Phi_{n}$ (counted with multiplicities). Then
	\[
	\wlim_{n\to\infty}\left(\nu_{n}-\mathcal{P}(\nu_{n})\right)=0.
	\]
\end{lem}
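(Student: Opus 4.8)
The plan is to unwind the definition of weak convergence and reduce everything to an elementary estimate controlled by hypothesis~\eqref{eq:lem_zer_assum}. Fix an arbitrary $f\in C(\overline{\D})$; since both $\nu_n$ and $\mathcal{P}(\nu_n)$ are probability measures supported in $\overline{\D}$, it suffices to show that $\int_{\overline{\D}} f\,\dd\nu_n-\int_{\T} f\,\dd\mathcal{P}(\nu_n)\to 0$ as $n\to\infty$. Using the defining property of balayage recalled above, $\int_{\T} f\,\dd\mathcal{P}(\nu_n)=\int_{\overline{\D}}\mathcal{P}_*f\,\dd\nu_n$, so the quantity to control is $\int_{\overline{\D}}(f-\mathcal{P}_*f)\,\dd\nu_n$. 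The key structural observation is that $g:=f-\mathcal{P}_*f$ belongs to $C(\overline{\D})$ — here one invokes the standard fact that the Poisson extension of the continuous boundary data $f|_{\T}$ is continuous up to $\T$ — and that $g$ vanishes identically on $\T$.

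Thus the whole lemma reduces to the claim: if $g\in C(\overline{\D})$ with $g|_{\T}\equiv 0$, then $\int_{\overline{\D}}g\,\dd\nu_n\to 0$. To prove it, fix $\varepsilon>0$. The set $\{z\in\overline{\D} : |g(z)|\ge\varepsilon\}$ is closed, bounded, and disjoint from $\T$, hence a compact subset of the open disk, so there is $\delta\in(0,1)$ with $|g(z)|<\varepsilon$ whenever $1-\delta\le|z|\le 1$. Splitting the integral over this annulus and its complement and estimating crudely gives
\[
\left|\int_{\overline{\D}}g\,\dd\nu_n\right|\le \|g\|_\infty\,\nu_n\big(\{z : |z|\le 1-\delta\}\big)+\varepsilon .
\]
By~\eqref{eq:lem_zer_assum}, $\nu_n(\{z:|z|\le 1-\delta\})=\tfrac1n\sharp\{j : |z_j|\le 1-\delta\}\to 0$, so $\limsup_{n\to\infty}\big|\int_{\overline{\D}}g\,\dd\nu_n\big|\le\varepsilon$; letting $\varepsilon\downarrow 0$ proves the claim, and with it the lemma.

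I do not anticipate a genuine obstacle: this is a soft compactness/approximation argument, and the only two points deserving a sentence of justification are (i) that $\mathcal{P}_*f$ is continuous on all of $\overline{\D}$, so that $g=f-\mathcal{P}_*f$ is a legitimate test function vanishing on $\T$, and (ii) the choice of $\delta$ depending on $\varepsilon$ via compactness of the sublevel set of $|g|$. One could alternatively run the proof through moments, testing against $z^k$ for $k\in\N_0$ together with their complex conjugates, but since the monomials do not vanish on $\T$ this detours through the moment identity~\eqref{eq:balay_moments_eq} without real gain; the formulation via continuous functions vanishing on the boundary is the cleanest.
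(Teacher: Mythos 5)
Your proof is correct, and it differs meaningfully from the paper's. The paper proceeds in two steps: it introduces the auxiliary measure $\widetilde\nu_n = \tfrac1n\sum_j\delta_{\ee^{\ii\theta_j}}$ obtained by radially projecting each zero $z_j=r_j\ee^{\ii\theta_j}$ onto $\T$, then shows $\nu_n-\widetilde\nu_n\to 0$ weakly by a uniform-continuity argument on compact $\overline{\D}$, and separately shows $\widetilde\nu_n-\mathcal{P}(\nu_n)\to 0$ by comparing positive moments and using the identity~\eqref{eq:balay_moments_eq} together with Stone--Weierstrass. You bypass the intermediate measure entirely: you transfer $\mathcal{P}(\nu_n)$ back to $\nu_n$ via the dual formula $\int_{\T}f\,\dd\mathcal{P}(\nu_n)=\int_{\overline{\D}}\mathcal{P}_*f\,\dd\nu_n$, observe that $g=f-\mathcal{P}_*f$ is continuous on $\overline{\D}$ (solvability of the Dirichlet problem with continuous boundary data) and vanishes identically on $\T$, and then run a single compactness/splitting estimate. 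Your argument is shorter and more conceptual — it isolates the exact structural reason the lemma holds, namely that $\nu_n$ and its balayage differ by integration against a boundary-vanishing function — at the cost of invoking the continuity of the Poisson extension up to $\T$, whereas the paper's two-step route uses only elementary estimates and the moment identity. Both are valid; yours is arguably the cleaner formulation, while the paper's is more self-contained at the level of tools.

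One stylistic nit: when you split the integral you bound the piece on $\{|z|\le 1-\delta\}$, but the hypothesis~\eqref{eq:lem_zer_assum} gives you exactly that quantity, so the matching of the cut-off $1-\delta$ with the one in the hypothesis is fine as written; just make sure the annulus inequality you state ($1-\delta\le|z|\le 1$) is taken non-strict on the side where $|g|<\varepsilon$, as you do.
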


\begin{proof}
	The proof is divided into two parts. For the zero-counting measure $\nu_{n}$, we introduce an auxiliary measure
	\[
	\widetilde{\nu}_{n}=\frac{1}{n}\sum_{j=1}^{n}{\delta}_{\ee^{\ii\theta_{j}}},
	\]
	where we write $z_{j}=r_{j}\ee^{\ii\theta_{j}}$ for some $r_{j}\geq0$ and $\theta_{j}\in(-\pi,\pi]$; if $r_{j}=0$ then $\theta_{j}$ can be taken arbitrarily. In the first part of the proof, we show that
	\begin{equation}
		\wlim_{n\to\infty}(\nu_{n}-\widetilde{\nu}_{n})=0,
		\label{eq:wlim_aux_nu_nu}
	\end{equation}
	while the second part is devoted to the proof of the limit relation
	\begin{equation}
		\wlim_{n\to\infty}(\widetilde{\nu}_{n}-\mathcal{P}(\nu_{n}))=0.
		\label{eq:wlim_aux_nu_balay_nu}
	\end{equation}
	Clearly, \eqref{eq:wlim_aux_nu_nu} and~\eqref{eq:wlim_aux_nu_balay_nu} imply the statement.
	
	1) \emph{The verification of} \eqref{eq:wlim_aux_nu_nu}: Pick $f\in C(\overline{\D})$ and $\epsilon>0$ arbitrarily. Since $\overline{\D}$ is compact, $f$ is uniformly continuous in~$\overline{\D}$, and hence there exists $\delta\in(0,1)$ such that, for any $z,w\in\overline{\D}$, $|w-z|<\delta$, one has
	\begin{equation}
		|f(w)-f(z)|<\epsilon.
		\label{eq:unif_cont_f_inproof}
	\end{equation}
	To proceed further, we write
	\begin{equation}
		\frac{1}{n}\sum_{j=1}^{n}\left|f(z_{j})-f(\ee^{\ii\theta_{j}})\right|=
		\frac{1}{n}\sum_{|z_{j}|\leq1-\delta}\left|f(z_{j})-f(\ee^{\ii\theta_{j}})\right|+
		\frac{1}{n}\sum_{|z_{j}|>1-\delta}\left|f(z_{j})-f(\ee^{\ii\theta_{j}})\right|.
		\label{eq:sum_decomp_inproof}
	\end{equation}
	
	The first term on the right-hand side of~\eqref{eq:sum_decomp_inproof} can be estimated as
	\[
	\frac{1}{n}\sum_{|z_{j}|\leq1-\delta}\left|f(z_{j})-f(\ee^{\ii\theta_{j}})\right|\leq\frac{2\|f\|_{\overline{\D}}}{n}\,\sharp\{j\mid|z_{j}|\leq1-\delta\},
	\]
	where $\|f\|_{\overline{\D}}=\sup_{z\in\overline{\D}}|f(z)|$, which implies that
	\[
	\lim_{n\to\infty}\frac{1}{n}\sum_{|z_{j}|\leq1-\delta}\left|f(z_{j})-f(\ee^{\ii\theta_{j}})\right|=0
	\]
	by assumption~\eqref{eq:lem_zer_assum}. On the other hand, for the second term on the right-hand side of~\eqref{eq:sum_decomp_inproof}, one has
	\[
	\frac{1}{n}\sum_{|z_{j}|>1-\delta}\left|f(z_{j})-f(\ee^{\ii\theta_{j}})\right|\leq
	\frac{\epsilon}{n}\,\sharp\{j\mid|z_{j}|>1-\delta\}\leq\epsilon
	\]
	by~\eqref{eq:unif_cont_f_inproof} because
	\[
	\left|z_{j}-\ee^{\ii\theta_{j}}\right|=1-|z_{j}|<\delta.
	\]
	Taking into account that $\epsilon$ can be chosen arbitrarily small, we conclude that
	\[
	\lim_{n\to\infty}\frac{1}{n}\sum_{j=1}^{n}\left|f(z_{j})-f(\ee^{\ii\theta_{j}})\right|=0,
	\]
	which implies~\eqref{eq:wlim_aux_nu_nu}.
	
	2) \emph{The verification of} \eqref{eq:wlim_aux_nu_balay_nu}: Both measures $\widetilde{\nu}_{n}$ and $\mathcal{P}(\nu_{n})$ are positive measures supported in~$\T$. Therefore, in order to obtain~\eqref{eq:wlim_aux_nu_balay_nu}, it is sufficient to show that
	\begin{equation}
		\lim_{n\to\infty}\int_{0}^{2\pi}\ee^{\ii k t}\dd\!\left(\widetilde{\nu}_{n}-\mathcal{P}(\nu_{n})\right)(\ee^{\ii t})=0
		\label{eq:lim_pos_mom_balay_nu_tilde_nu}
	\end{equation}
	for all $k\in\N_{0}$. For any $k\in\N_{0}$, one can make use of~\eqref{eq:balay_moments_eq} to obtain
	\begin{equation}
		\int_{0}^{2\pi}\ee^{\ii k t}\,\dd\!\left(\widetilde{\nu}_{n}-\mathcal{P}(\nu_{n})\right)(\ee^{\ii t})
		=\int_{0}^{2\pi}\ee^{\ii k t}\dd\widetilde{\nu}_{n}(\ee^{\ii t})-\int_{\overline{\D}}z^{k}\dd\nu_{n}(z)=\frac{1}{n}\sum_{j=1}^{n}\left(\ee^{\ii k\theta_{j}}-z_{j}^{k}\right).
		\label{eq:tilde_nu_balay_nu_inproof}
	\end{equation}
	Choosing $\delta\in(0,1)$ we may write similarly as in the first part of the proof that
	\begin{align}
		\frac{1}{n}\sum_{j=1}^{n}\left|\ee^{\ii k\theta_{j}}-z_{j}^{k}\right|&=
		\frac{1}{n}\sum_{|z_{j}|\leq 1-\delta}\left|\ee^{\ii k\theta_{j}}-z_{j}^{k}\right|+
		\frac{1}{n}\sum_{|z_{j}|> 1-\delta}\left|\ee^{\ii k\theta_{j}}-z_{j}^{k}\right|\nonumber\\
		&\leq\frac{2}{n}\,\sharp\{j\mid|z_{j}|\leq1-\delta\}+\delta.
		\label{eq:mom_diff_estim_inproof}
	\end{align}
	Since $\delta$ can be chosen arbitrarily small, \eqref{eq:tilde_nu_balay_nu_inproof}, \eqref{eq:mom_diff_estim_inproof}, and assumption~\eqref{eq:lem_zer_assum} imply~\eqref{eq:lim_pos_mom_balay_nu_tilde_nu}.
\end{proof}

Now, we are ready to prove our main result on the asymptotic zero distribution of OPUC with varying Verblunsky coefficients.

\begin{thm}\label{thm:opuc_asympt_distr}
	Let the assumptions of Theorem~\ref{thm:popuc_asympt_distr} hold. Suppose, in addition, that 
	\begin{equation}
		\lim_{n/N\to t}|\alpha_{n-1,N}|^{1/n}=1.
		\label{eq:asum_opuc_var}
	\end{equation}
	Then
	\[
		\wlim_{n/N\to t} \nu_{n,N}=\frac{1}{t}\int_{0}^{t}\nu_{|\alpha(s)|}\dd s.
	\]
\end{thm}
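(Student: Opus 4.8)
The plan is to reduce the statement, via balayage, to its already-established companion Theorem~\ref{thm:opuc_balayage}. That theorem gives
\[
	\wlim_{n/N\to t}\mathcal{P}(\nu_{n,N})=\frac{1}{t}\int_{0}^{t}\nu_{|\alpha(s)|}\dd s ,
\]
so it suffices to show that, under the extra hypothesis~\eqref{eq:asum_opuc_var}, the measures $\nu_{n,N}$ and $\mathcal{P}(\nu_{n,N})$ have the same weak limit as $n/N\to t$; equivalently, $\wlim_{n/N\to t}\bigl(\nu_{n,N}-\mathcal{P}(\nu_{n,N})\bigr)=0$. This is precisely what Lemma~\ref{lem:balay_meas} delivers, applied along an arbitrary sequence $n_j,N_j$ as in~\eqref{eq:subs}: note that the proof of that lemma uses only that the zeros of the polynomials lie in $\overline{\D}$ and that the zero-localization condition~\eqref{eq:lem_zer_assum} holds, and is therefore insensitive to the orthogonality measure varying with $N$. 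Hence the whole argument comes down to checking that, for every $\delta\in(0,1)$,
\[
	\lim_{n/N\to t}\frac{\sharp\{\,j\mid |z_{j,n,N}|\le 1-\delta\,\}}{n}=0 ,
\]
where $z_{1,n,N},\dots,z_{n,n,N}\in\D$ are the zeros of $\Phi_{n,N}$.

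To verify this, I would exploit the standard identity $\Phi_{n,N}(0)=-\bar{\alpha}_{n-1,N}$, which follows from evaluating the Szeg\H{o} recursion~\eqref{eq:recur_opuc} at $z=0$ together with $\Phi_{n-1,N}^{*}(0)=1$ (the conjugate of the leading coefficient of the monic $\Phi_{n-1,N}$). Since $\Phi_{n,N}(z)=\prod_{j=1}^{n}(z-z_{j,n,N})$, taking absolute values at $z=0$ yields $\prod_{j=1}^{n}|z_{j,n,N}|=|\alpha_{n-1,N}|$. Fixing $\delta\in(0,1)$ and writing $n_{\delta}=\sharp\{\,j\mid |z_{j,n,N}|\le 1-\delta\,\}$, we bound $n_{\delta}$ of the factors by $1-\delta$ and the remaining ones by $1$, obtaining $|\alpha_{n-1,N}|\le(1-\delta)^{n_{\delta}}$, and hence
\[
	\frac{n_{\delta}}{n}\le\frac{\log\bigl(|\alpha_{n-1,N}|^{1/n}\bigr)}{\log(1-\delta)} .
\]
As $n/N\to t$, assumption~\eqref{eq:asum_opuc_var} forces $|\alpha_{n-1,N}|^{1/n}\to 1$, so the right-hand side tends to $0$ while $n_{\delta}/n\ge 0$; this establishes the localization of the zeros of $\Phi_{n,N}$ near $\T$ and, in particular, verifies~\eqref{eq:lem_zer_assum} along every sequence as in~\eqref{eq:subs}.

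Putting the pieces together, Lemma~\ref{lem:balay_meas} then gives $\wlim_{n/N\to t}(\nu_{n,N}-\mathcal{P}(\nu_{n,N}))=0$, and adding this limit to that of Theorem~\ref{thm:opuc_balayage}, using additivity of weak-$\star$ limits of measures on the compact set $\overline{\D}$, yields $\wlim_{n/N\to t}\nu_{n,N}=\tfrac{1}{t}\int_{0}^{t}\nu_{|\alpha(s)|}\dd s$. I do not expect a genuine obstacle in this argument: its only moving part is the Jensen-type zero-localization estimate above, and the sole point requiring a little care is the passage from the single-sequence formulation of Lemma~\ref{lem:balay_meas} to the doubly-indexed regime $n/N\to t$, which is harmless since that lemma's proof is purely measure-theoretic and uses nothing beyond the zeros lying in $\overline{\D}$ and condition~\eqref{eq:lem_zer_assum}.
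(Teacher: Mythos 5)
Your proposal is correct and follows essentially the same route as the paper: reduce to Theorem~\ref{thm:opuc_balayage} via Lemma~\ref{lem:balay_meas}, then verify the zero-localization condition~\eqref{eq:lem_zer_assum} by combining $\Phi_{n,N}(0)=-\bar\alpha_{n-1,N}=\prod_j(-z_{j,n,N})$ with the hypothesis~\eqref{eq:asum_opuc_var}. The only point the paper states more explicitly is that~\eqref{eq:asum_opuc_var} lets one assume $\alpha_{n-1,N}\neq 0$ for $n/N$ close to $t$, so that the logarithms in the Jensen-type bound are well defined; your argument needs this too, but it is clearly implicit.
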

\begin{rem}
	Note that~\eqref{eq:asum_opuc_var} is automatically fulfilled if {$\alpha_{n,N}\to\alpha(t)\neq0$ as $n/N\to t$.}
\end{rem}
\begin{rem}\label{rem:asymptPerOPUC}
	The results of Theorems~\ref{thm:opuc_balayage} and~\ref{thm:opuc_asympt_distr} can be extended to the asymptotically periodic setting of Theorem~\ref{thm:asympt_distr_popuc_periodic} using the same arguments.
\end{rem}
\begin{proof}[Proof of Theorem~\ref{thm:opuc_asympt_distr}]
	The idea of the proof relies on showing that the property~\eqref{eq:lem_zer_assum} holds in the variable setting, i.e the roots $z_{1,n,N},\dots,z_{n,n,N}$ of $\Phi_{n,N}$ are approaching the unit circle up to $o(n)$ exceptions as $n/N\to t$.
	
	Choose subsequences $\{n_{j}\},\{N_{j}\}\subset\N$ such that $n_{j},N_{j}\to\infty$ and $n_{j}/N_{j} \to t$. Recall the identity
	\[
	-\bar{\alpha}_{n_{j}-1,N_{j}}=\Phi_{n_{j},N_{j}}(0)=\prod_{k=1}^{n_j}\left(-z_{k,n_{j},N_{j}}\right).
	\]
	Having~\eqref{eq:asum_opuc_var}, we can assume that $\alpha_{n_{j}-1,N_{j}}\neq0$ for all $j$ large.
	Then we can take logarithms in the last equation {and} obtain 
	\begin{equation}
		\frac{1}{n_j} \log|\alpha_{n_j-1,N_j} | = \frac{1}{n_j} \sum_{k=1}^{n_j} \log|z_{k,n_j,N_j}| \leq \frac{\log(1-\delta)}{n_j}\,\sharp\{k \mid |z_{k,n_j,N_j}|\leq1-\delta\}
		\label{eq:log_det_at_zero}
	\end{equation}
	for arbitrary $\delta\in(0,1)$. By the assumption~\eqref{eq:asum_opuc_var}, the left-hand side of~\eqref{eq:log_det_at_zero} goes to zero for $j\to\infty$. Taking into account that the right-hand side of~\eqref{eq:log_det_at_zero} is non-positive, one readily obtains
	\[
	\lim_{j\to\infty}{\frac{\sharp\{k \mid |z_{k,n_j,N_j}|\leq1-\delta\}}{n_j}}=0.
	\]
	Therefore we can apply Lemma~\ref{lem:balay_meas} which together with Theorem~\ref{thm:opuc_balayage} finishes the proof.\
\end{proof}

\begin{rem}
	The argument used in this proof is directly borrowed from the proof of~\cite[Thm 8.1.1]{simon_opuc1}.
\end{rem}

\begin{rem}\label{rem:open_prob}
	It would be very interesting to establish the variable version of the Mhaskar--Saff theorem, that is, to determine the limiting zero density of OPUC when the limit in~\eqref{eq:asum_opuc_var} is no longer assumed to be $1$. Addressing this problem likely requires a different approach. We leave this question as an open problem for future. 
\end{rem}

\section{Examples}\label{sec:examples}

As an illustration, we provide examples of POPUC and OPUC with variable Verblunsky coefficients whose asymptotic zero distribution is determined by Theorems \ref{thm:popuc_asympt_distr} and~\ref{thm:opuc_asympt_distr} and can be computed in an even more explicit form. This concerns particularly cases when $|\alpha|$ is a monotone function on $(0,t)$.

\begin{prop}\label{prop:dens_sigma_t_f_monot}
	Let $\alpha:[0,t]\to\overline{\D}$ be continuous and denote $f(s)=|\alpha(s)|$ for $s\in[0,t]$.
	\begin{enumerate}[label=(\roman*)]
		\item If $f$ is strictly increasing in $(0,t)$, then $\sigma_{t}$ is the absolutely continuous measure with $\supp\sigma_{t}=\{\ee^{\ii\theta} \mid \theta\in[\theta_{f(0)},2\pi-\theta_{f(0)}]\}$ and
		\begin{equation}
			\frac{\dd\sigma_{t}}{\dd\theta}\left(\ee^{\ii\theta}\right)=\frac{1}{2\pi t}\int_{0}^{\min(t,f^{-1}(\sin(\theta/2))}\frac{\sin\left(\theta/2\right)}{\sqrt{\sin^{2}\left(\theta/2\right)-f^{{2}}(s)}}\dd s,
			\label{eq:dens_sigma_t_f_incr}
		\end{equation}
		for $\theta\in(\theta_{f(0)},2\pi-\theta_{f(0)})$.
		\item If $f$ is strictly decreasing in $(0,t)$, then $\sigma_{t}$ is the absolutely continuous measure with $\supp\sigma_{t}=\{\ee^{\ii\theta} \mid [\theta_{f(t)},2\pi-\theta_{f(t)}]\}$ and
		\begin{equation}
			\frac{\dd\sigma_{t}}{\dd\theta}\left(\ee^{\ii\theta}\right)=\frac{1}{2\pi t}\int_{\max(0,f^{-1}(\sin(\theta/2))}^{t}\frac{\sin\left(\theta/2\right)}{\sqrt{\sin^{2}\left(\theta/2\right)-f^{{2}}(s)}}\dd s,
			\label{eq:dens_sigma_t_f_decr}
		\end{equation}
		for $\theta\in(\theta_{f(t)},2\pi-\theta_{f(t)})$.
	\end{enumerate}
	Here $\sigma_{t}$ is the average measure~\eqref{eq:def_sigma_t} and the angle $\theta_{a}$ is defined by~\eqref{eq:def_theta}.
\end{prop}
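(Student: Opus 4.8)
The plan is to unwind the definition $\sigma_t=\frac1t\int_0^t\nu_{|\alpha(s)|}\,\dd s$ from \eqref{eq:def_sigma_t} and reduce everything to Tonelli's theorem plus a change of variables governed by $f^{-1}$. First I would record a convenient form of the density \eqref{eq:meas_nu}: since $\theta_a=2\arcsin a$ gives $\cos^2(\theta_a/2)=1-a^2$, one has $\cos^2(\theta_a/2)-\cos^2(\theta/2)=\sin^2(\theta/2)-a^2$, so for $a\in[0,1)$ the measure $\nu_a$ is absolutely continuous with $\frac{\dd\nu_a}{\dd\theta}(\ee^{\ii\theta})=\frac{1}{2\pi}\frac{\sin(\theta/2)}{\sqrt{\sin^2(\theta/2)-a^2}}$, supported on $\Gamma_a$, and $\ee^{\ii\theta}\in\Gamma_a$ if and only if $\sin(\theta/2)\ge a$ (for $\theta\in(0,2\pi)$), while $\nu_1=\delta_{-1}$. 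Because $f$ is strictly monotone and $f\le 1$, we have $f(0)<1$ in case (i) and $f(t)<1$ in case (ii), and in either case $\{s\in[0,t]:f(s)=1\}$ contains at most one point, hence is Lebesgue-null; so the $\delta_{-1}$ contributions drop out and $\sigma_t(A)=\frac1t\int_0^t\nu_{f(s)}(A)\,\dd s$ may be computed using the explicit absolutely continuous density of $\nu_{f(s)}$ for a.e.\ $s$.

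Next, for a Borel set $A=\{\ee^{\ii\theta}:\theta\in B\}$ with $B\subseteq(0,2\pi)$, I would write
\[
\sigma_t(A)=\frac{1}{2\pi t}\int_0^t\!\int_B \frac{\sin(\theta/2)}{\sqrt{\sin^2(\theta/2)-f(s)^2}}\,\chi_{\{f(s)\le\sin(\theta/2)\}}\,\dd\theta\,\dd s .
\]
The integrand is nonnegative and jointly Borel measurable (continuity of $f$), so Tonelli's theorem allows exchanging the order of integration, which exhibits $\sigma_t$ as the absolutely continuous measure with density
\[
g(\theta)=\frac{1}{2\pi t}\int_{\{s\in[0,t]\,:\,f(s)\le\sin(\theta/2)\}} \frac{\sin(\theta/2)}{\sqrt{\sin^2(\theta/2)-f(s)^2}}\,\dd s .
\]
Since $\sigma_t$ is a probability measure, $g$ is automatically finite for a.e.\ $\theta$; in particular the potential non-integrable singularity of the integrand at the edge of the $s$-domain requires no separate treatment.

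It then remains to evaluate the $s$-domain $\{s\in[0,t]:f(s)\le\sin(\theta/2)\}$ using monotonicity. In case (i), $f$ strictly increasing: if $\sin(\theta/2)\le f(0)$ this set is empty (equality only at the null point $s=0$), so $g(\theta)=0$; if $\sin(\theta/2)>f(0)$, i.e.\ $\theta\in(\theta_{f(0)},2\pi-\theta_{f(0)})$, then continuity and strict monotonicity give $\{s:f(s)\le\sin(\theta/2)\}=[0,\min(t,f^{-1}(\sin(\theta/2)))]$ (the whole interval when $\sin(\theta/2)\ge f(t)$), which yields \eqref{eq:dens_sigma_t_f_incr}. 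Moreover $g>0$ on this open arc: for such $\theta$ there is $s$ near $0$ with $f(s)<\sin(\theta/2)$, so the integrand is positive on a set of positive $s$-measure; combined with $g\equiv 0$ off the arc this gives $\supp\sigma_t=\{\ee^{\ii\theta}:\theta\in[\theta_{f(0)},2\pi-\theta_{f(0)}]\}$. Case (ii) is entirely symmetric: the $s$-domain becomes $[\max(0,f^{-1}(\sin(\theta/2))),t]$ for $\theta\in(\theta_{f(t)},2\pi-\theta_{f(t)})$, producing \eqref{eq:dens_sigma_t_f_decr} and the stated support.

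There is no deep difficulty here — the mathematical content is the Tonelli exchange together with inverting the monotone function $f$. The points to watch are purely bookkeeping: (a) the null set on which $f$ may equal $1$ at an endpoint; (b) the correct reading of $f^{-1}$ when $\sin(\theta/2)$ falls outside the range $[f(0),f(t)]$ of $f$, which is precisely why the formulas carry the $\min$ and $\max$; and (c) establishing \emph{both} inclusions for $\supp\sigma_t$, for which the positivity argument in the previous paragraph is needed in addition to the vanishing of $g$ outside the arc.
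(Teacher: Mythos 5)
Your proof is correct and follows essentially the same route as the paper's: both unwind $\sigma_t=\frac1t\int_0^t\nu_{f(s)}\,\dd s$, apply Tonelli to swap the order of integration, and identify the $s$-domain $\{s:f(s)\le\sin(\theta/2)\}$ by inverting the strictly monotone $f$. The paper works through the distribution function $F_{\sigma_t}(\xi)$ and invokes symmetry about the real axis to restrict to $\xi\le\pi$, while you work with general Borel sets directly, and you are a bit more explicit about the Lebesgue-null contribution from $\{s:f(s)=1\}$ and about proving both inclusions for $\supp\sigma_t$; these are cosmetic differences in bookkeeping rather than a different argument.
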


\begin{proof}
	We verify the statement for $f$ strictly increasing. If $f$ is strictly decreasing, the proof is analogous.

	Recall that $\supp\nu_{f(s)}=\{\ee^{\ii\theta} \mid \theta\in[\theta_{f(s)},2\pi-\theta_{f(s)}]\}$ 
	and therefore, assuming $f$ is increasing, it follows readily from~\eqref{eq:def_sigma_t} that $\supp\sigma_{t}=\{\ee^{\ii\theta} \mid \theta\in[\theta_{f(0)},2\pi-\theta_{f(0)}]\}$.
	
	Next, for $\xi\in(\theta_{f(0)},2\pi-\theta_{f(0)})$, we investigate the distribution function
	\[
	F_{\sigma_{t}}(\xi)=\sigma_{t}\left(\left\{\ee^{\ii\theta} \mid \theta\in\left[\theta_{f(0)},\xi\right]\right\}\right)=\frac{1}{t}\int_{0}^{t}\nu_{f(s)}\left(\left\{\ee^{\ii\theta} \mid \theta\in\left[\theta_{f(0)},\xi\right]\right\}\right)\dd s.
	\]
	{Since $\sigma_{t}$ is a symmetric measure with respect to the real line we can additionally assume that $\xi\leq\pi$. Then,} with the aid of the definition~\eqref{eq:meas_nu}, one computes that
	\[
	F_{\sigma_{t}}(\xi)=\frac{1}{2\pi t}\int_{0}^{\min(t,f^{-1}(\sin(\xi/2))}\left(\int_{\theta_{f(s)}}^{\xi}\frac{\sin\left(\theta/2\right)}{\sqrt{\cos^{2}\left(\theta_{f(s)}/2\right)-\cos^{2}\left(\theta/2\right)}}\dd\theta\right)\dd s.
	\]
	By interchanging the order of integrals and using~\eqref{eq:def_theta}, one arrives at the expression
	\[
	F_{\sigma_{t}}(\xi)=\frac{1}{2\pi t}\int_{\theta_{f(0)}}^{\xi}\left(\int_{0}^{\min(t,f^{-1}(\sin(\theta/2))}\frac{\sin\left(\theta/2\right)}{\sqrt{\sin^{2}\left(\theta/2\right)-f^{2}(s)}}\dd s\right)\dd\theta,
	\]
	from which~\eqref{eq:dens_sigma_t_f_incr} follows {for $\theta\in(\theta_{f(0)},\pi)$. Since the resulting expression for density~\eqref{eq:dens_sigma_t_f_incr} is invariant under the change of variable $\theta$ by $2\pi-\theta$, it holds true for all $\theta\in(\theta_{f(0)},2\pi-\theta_{f(0)})$.}
\end{proof}

Next examples illustrate the asymptotic zero distributions of variable OPUC given by Theorem~\ref{thm:opuc_asympt_distr} and formulas~\eqref{eq:dens_sigma_t_f_incr} and~\eqref{eq:dens_sigma_t_f_decr} for several specific choices of $f$.

\begin{example}
	Suppose $f(s)=s^{\omega}$ and $0<t\leq 1$, where $\omega>0$ is a parameter. This situation corresponds to the Verblunsky coefficients $\alpha_{n,N}=\left(n/N\right)^{\omega}$ for $0\le n\le N$, for example. Clearly, $f$ is strictly increasing and, by Proposition~\ref{prop:dens_sigma_t_f_monot}, $\supp\sigma_{t}=\T$.
	
	First, suppose $\theta\in(0,2\pi)$ and $\sin(\theta/2)>t^{\omega}$, then, according to~\eqref{eq:dens_sigma_t_f_incr}, we have
 \begin{align*}
		\frac{\dd\sigma_{t}}{\dd\theta}\left(\ee^{\ii\theta}\right)&=\frac{1}{2\pi t}\int_{0}^{t}\frac{\sin\left(\theta/2\right)}{\sqrt{\sin^{2}\left(\theta/2\right)-s^{2\omega}}}\dd s\\
		&=\frac{1}{2\pi t}(\sin\left(\theta/2\right))^{1/\omega}\int_{0}^{t/(\sin(\theta/2))^{1/\omega}}\frac{\dd u}{\sqrt{1-u^{2\omega}}},
 \end{align*}
where we substituted for $s=(\sin(\theta/2))^{1/\omega} u$. The above integral can be expressed in terms of standard special functions. Namely, using the definition of the Gauss hypergeometric series, see~\cite[Chap.~15]{dlmf}, and expanding the integrand into power series of variable $u^{2\omega}$, one verifies the identity
	\[
	\int_{0}^{x}\frac{\dd u}{\sqrt{1-u^{2\omega}}}=x\,_{2}F_{1}\left(\frac{1}{2},\frac{1}{2\omega};\frac{2\omega+1}{2\omega};x^{2\omega}\right), \quad x\in(0,1).
	\]
	Hence, the density of $\sigma_{t}$ can be written as
	\[
	\frac{\dd\sigma_{t}}{\dd\theta}\left(\ee^{\ii\theta}\right)=\frac{1}{2\pi}\,_{2}F_{1}\left(\frac{1}{2},\frac{1}{2\omega};\frac{2\omega+1}{2\omega};\frac{t^{2\omega}}{\sin^{2}(\theta/2)}\right).
	\]
	
	On the other hand, if $\theta\in(0,2\pi)$ and $\sin(\theta/2)\leq t^{\omega}$, one proceeds similarly using~\eqref{eq:dens_sigma_t_f_incr} to obtain
 \[
\frac{\dd\sigma_{t}}{\dd\theta}\left(\ee^{\ii\theta}\right)=\frac{\left(\sin\left(\theta/2\right)\right)^{\frac{1}{\omega}}}{2\pi t} \,_{2}F_{1}\left(\frac{1}{2},\frac{1}{2\omega};\frac{2\omega+1}{2\omega};1\right)
=\frac{\left(\sin\left(\theta/2\right)\right)^{\frac{1}{\omega}}}{2\sqrt{\pi}t}\,\frac{\Gamma\left(\frac{2\omega+1}{2\omega}\right)}{\Gamma\left(\frac{\omega+1}{2\omega}\right)},
 \]
where the last equality holds due to the identity~\cite[Eq.~(15.4.20)]{dlmf}.
	
In summary, for $\theta\in(0,2\pi)$, the density reads
	\begin{equation}
		\frac{\dd\sigma_{t}}{\dd\theta}\left(\ee^{\ii\theta}\right)=\begin{cases}
			\frac{1}{2\pi}\,_{2}F_{1}\left(\frac{1}{2},\frac{1}{2\omega};\frac{2\omega+1}{2\omega};\frac{t^{2\omega}}{\sin^{2}(\theta/2)}\right), & \quad\mbox{ if } \sin(\theta/2)>t^{\omega},\\[8pt]
			\frac{1}{2\sqrt{\pi} t}\frac{\Gamma\left((2\omega+1)/(2\omega)\right)}{\Gamma\left((\omega+1)/(2\omega)\right)}\left(\sin\left(\theta/2\right)\right)^{\frac{1}{\omega}}, & \quad\mbox{ if } \sin(\theta/2)\leq t^{\omega}.
		\end{cases}
		\label{eq:dens_f_power_general}
	\end{equation}
	In particular, for $\omega=1$, the expression for the density further simplifies
	\begin{equation}
		\frac{\dd\sigma_{t}}{\dd\theta}\left(\ee^{\ii\theta}\right)=\begin{cases}
			\frac{1}{2\pi t}\sin\left(\frac{\theta}{2}\right)\arcsin\left(\frac{t}{\sin(\theta/2)}\right), & \quad\mbox{ if } \sin(\theta/2)>t,\\[6pt]
			\frac{1}{4t}\sin\left(\frac{\theta}{2}\right), & \quad\mbox{ if } \sin(\theta/2)\leq t.
		\end{cases}
		\label{eq:dens_f_power_om_1}
	\end{equation}
	These densities are plotted for $\omega\in\{1,2\}$ and several choices of $t\in(0,1]$ in Figure~\ref{fig:dens1ab}.
\end{example}

\begin{figure}[htb!]
	\centering
	\begin{subfigure}[b]{0.49\textwidth}
		\includegraphics[width=\textwidth]{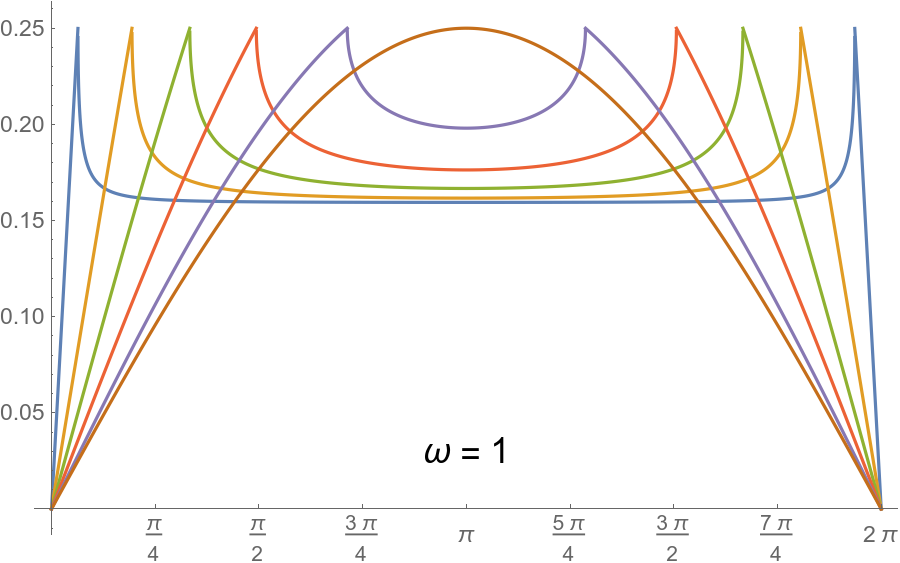}
	\end{subfigure}
	\begin{subfigure}[b]{0.49\textwidth}
		\includegraphics[width=\textwidth]{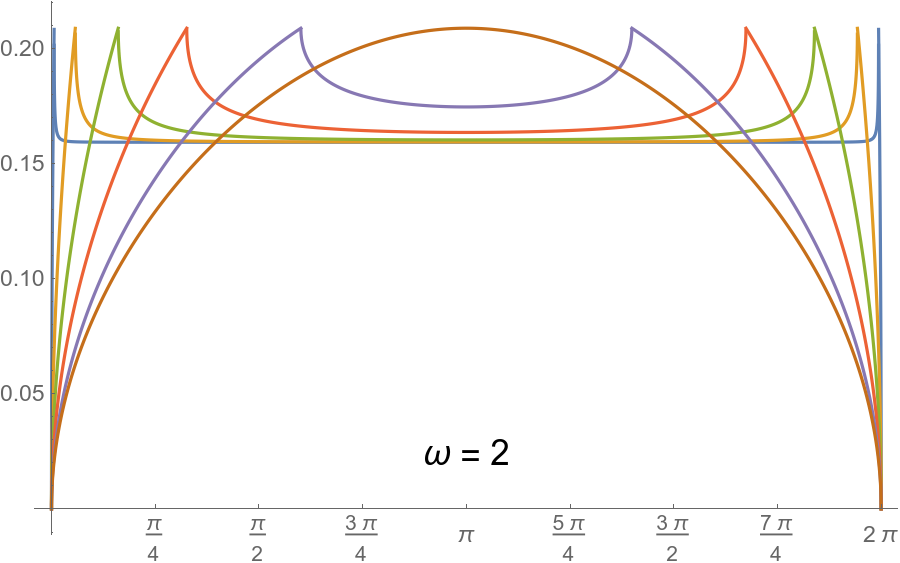}
	\end{subfigure}
	\captionsetup{width=0.99\textwidth}
	\caption{Densities~\eqref{eq:dens_f_power_general} and~\eqref{eq:dens_f_power_om_1} for $\omega\in\{1,2\}$ and $t\in\{0.1,0.3,0.5,0.7,0.9,1\}$.}
	\label{fig:dens1ab}
\end{figure}

\begin{example}
	The Rogers--Szeg\H{o} polynomials~\cite[Ex.~1.6.5]{simon_opuc1} correspond to the choice of Verblunsky parameters $\alpha_n = (-1)^n \gamma^{n+1}$ with $\gamma\in(0,1)$; see also~\cite[Chap.~17]{ismail_09}.  Their orthogonality measure is supported on $\T$ with the density
	\begin{equation}\label{eq:RogSzeMeas}
		\frac{\dd\mu}{\dd\theta}(\ee^{\ii \theta}) = \frac{1}{\sqrt{2\pi a}} \sum_{j=-\infty}^{\infty} \ee^{-(\theta-2\pi j)^2/(2a)},
	\end{equation}
	where $a=-2\log\gamma$. 
	The $n$-th OPUC can {be expressed explicitly as}
	$$
	\Phi_n(z)
	= \sum_{j=0}^{n}{\frac{(-1)^{j}\gamma^{j}}{(1-\gamma^2)\ldots(1-\gamma^{2j})}\, z^{n-j}}.
	$$
	All of its zeros belong to the circle $\{z\in\C \mid |z| = \gamma\}$, {see~\cite{maz-etal_90}}. By the Mhaskar--Saff theorem, the zero counting measure weakly converges to the uniform measure on this circle.
	
	Let us now study the varying Rogers--Szeg\H{o} polynomials where $\gamma$ is $N$-dependent. The simplest setting to start with is to take $\gamma_{N}=\zeta^{1/N}$ with a fixed $\zeta\in (0,1)$.  The orthogonality measure $\mu_N$ is then~\eqref{eq:RogSzeMeas} with $a$ replaced by $a_N=-2\log(\zeta)/N$. From the non-varying case it is clear that the zeros must converge to $\T$, however the limit of the zero counting measure is no longer uniform, as we show now.
	
	{Instead of analyzing} the case $\alpha_{n,N}=(-1)^n \zeta^{(n+1)/N}$, {which corresponds to the choice $\gamma_{n}=\zeta^{1/N}$, we consider the rotated} case $\alpha_{n,N}=\zeta^{(n+1)/N}$. Let us therefore study the latter case. As $n/N\to t$, we obtain the setting of Theorem~\ref{thm:opuc_asympt_distr} and Proposition~\ref{prop:dens_sigma_t_f_monot} with  strictly decreasing  function $f(s) = \zeta^{s}$.
		
It follows that the limiting measure $\sigma_{t}$ of OPUC as well as POPUC is absolutely continuous with 
	\[
	\supp\sigma_{t}=\{\ee^{\ii\theta} \mid \theta\in[2\arcsin \zeta^{t},2\pi-2\arcsin \zeta^{t}]\}
	\]
	and density given by~\eqref{eq:dens_sigma_t_f_decr}. Note that $f^{-1}(\sin(\theta/2)) = \log_\zeta(\sin(\theta/2))>0$; so for any $\theta\in(2\arcsin \zeta^{t},2\pi-2\arcsin \zeta^{t})$ we have
	\[
	\frac{\dd\sigma_{t}}{\dd\theta}\!\left(\ee^{\ii\theta}\right)=\frac{1}{2\pi t}\int_{\log_{\zeta}\sin(\theta/2)}^{t}\frac{\sin\left(\theta/2\right)}{\sqrt{\sin^{2}\left(\theta/2\right)-\zeta^{2s}}}\dd s=-\frac{1}{2\pi t\log \zeta}\int_{\frac{\zeta^{t}}{\sin(\theta/2)}}^{1}\frac{\dd u}{u\sqrt{1-u^{2}}},
	\]
	where we substituted for $\zeta^{s}=u\sin(\theta/2)$. Since
	\[
	\frac{\dd}{\dd u}\left(\log u - \log(1+\sqrt{1-u^{2}})\right)=\frac{1}{u\sqrt{1-u^{2}}},
	\]
	the final formula for the density reads
	\begin{equation}
		\frac{\dd\sigma_{t}}{\dd\theta}\left(\ee^{\ii\theta}\right)=\frac{1}{2\pi}\left[1-\frac{1}{t}\log_{\zeta}\!\left(\sin(\theta/2)+\sqrt{\sin^{2}(\theta/2)-\zeta^{2t}}\right)\right],
		\label{eq:dens_f_exp}
	\end{equation}
	for $\theta\in(2\arcsin \zeta^{t},2\pi-2\arcsin \zeta^{t})$. Density~\eqref{eq:dens_f_exp} is plotted in Figure~\ref{fig:dens2} for several choices of $\zeta\in(0,1)$.
	
	The limiting zero density for the varying Rogers--Szeg\H{o} polynomials with $\alpha_{n,N}=(-1)^n \zeta^{(n+1)/N}$ is then $\sigma_{t}(\ee^{\ii(\theta-\pi)})$, where $\sigma_t$ is ~\eqref{eq:dens_f_exp}. 
\end{example}

\begin{figure}[htb!]
	\centering
	\includegraphics[width=0.9\textwidth]{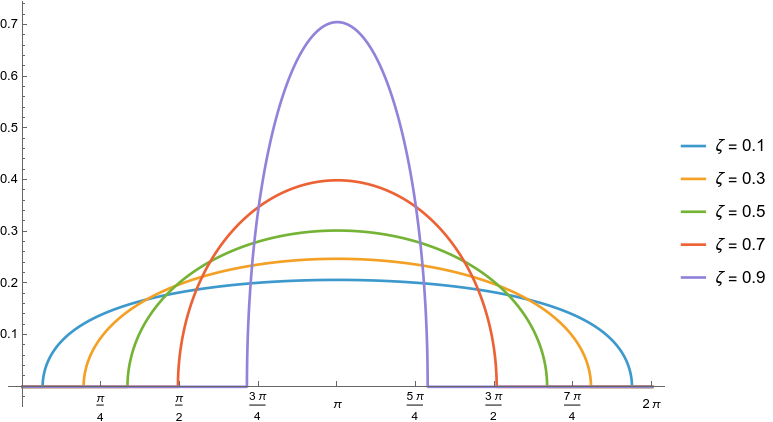}
	\captionsetup{width=0.99\textwidth}
	\caption{Density~\eqref{eq:dens_f_exp} plotted for $t=1$ and $\zeta\in\{0.1,0.3,0.5,0.7,0.9\}$.}
	\label{fig:dens2}
\end{figure}

In the next example we study a case when the density of~$\sigma_{t}$ has a singularity.

\begin{example}
	Let $f(s)=\sqrt{1-s^{2}}$ and $0<t\leq1$. Then $f$ is strictly decreasing and a straightforward computation of the integral in~\eqref{eq:dens_sigma_t_f_decr} results in the formula
	\begin{equation}
		\frac{\dd\sigma_{t}}{\dd\theta}\left(\ee^{\ii\theta}\right)=\frac{\sin\left(\theta/2\right)}{2\pi t}\left[\log\left(t+\sqrt{t^{2}-\cos^{2}\left(\theta/2\right)}\right)-\log\left|\cos\left(\theta/2\right)\right|\right],
		\label{eq:dens_f_sing}
	\end{equation}
	for $\theta\in[2\arcsin\sqrt{1-t^{2}},\pi)\cup(\pi,2\pi-2\arcsin\sqrt{1-t^{2}}]$. Note the singularity at $\theta=\pi$ of the function~\eqref{eq:dens_f_sing}; see Figure~\ref{fig:dens3} for numerical plots.
\end{example}

\begin{figure}[htb!]
	\centering
	\includegraphics[width=0.9\textwidth]{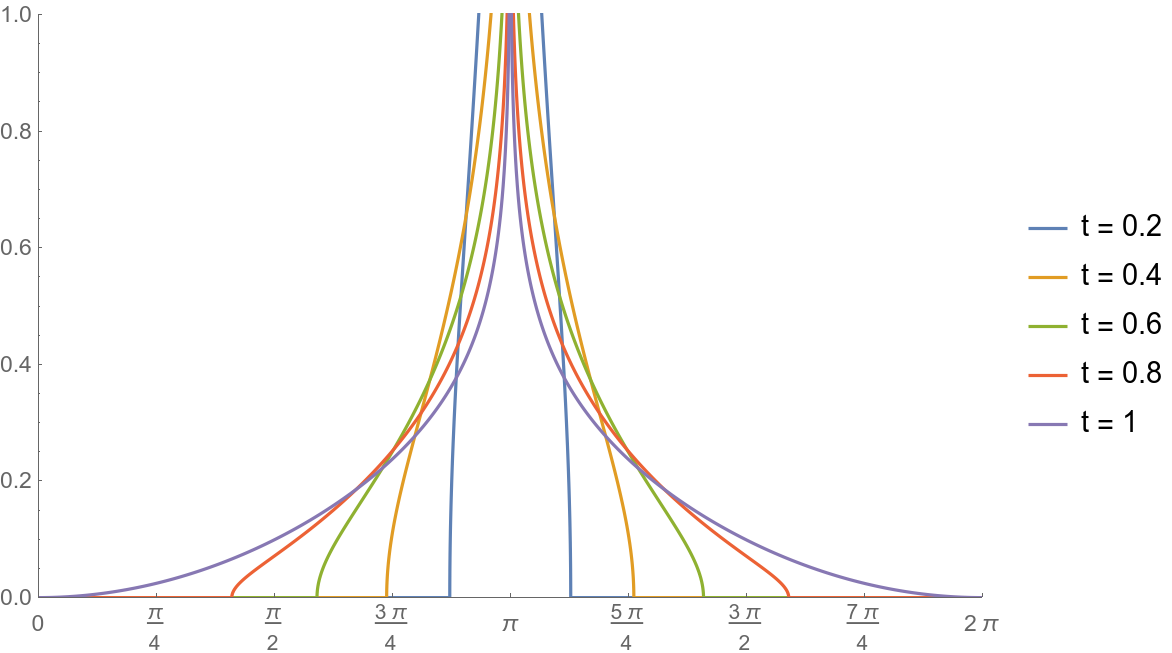}
	\captionsetup{width=0.99\textwidth}
	\caption{Density~\eqref{eq:dens_f_sing} plotted for $t\in\{0.2,0.4,0.6,0.8,1\}$.}
	\label{fig:dens3}
\end{figure}

The next example is interesting since it illustrates a situation where the limiting distribution of roots of variable OPUC is controlled by the complete elliptic integral of the first kind
\begin{equation}
	K(k)=\int_{0}^{1}\frac{\dd x}{\sqrt{(1-x^{2})(1-k^{2}x^{2})}}, \quad k\in(0,1),
	\label{eq:def_elliptic_int_K}
\end{equation}
see~\cite[Chap.~19]{dlmf}.
Moreover, it indicates that the applicability of Proposition~\ref{prop:dens_sigma_t_f_monot} can be extended to certain not necessarily monotonic functions~$f$.

\begin{example}
	Let $f(s)=\sin\left(\pi s/t\right)$ for $t>0$. Then $f$ is not monotonic on $(0,t)$ but is monotonic separately on $(0,t/2)$ and $(t/2,t)$. Moreover, $f$ is symmetric with respect to~$t/2$, i.e., $f(s)=f(t-s)$, for all $s\in[0,t]$. Taking these properties into account, it is easy to see from~\eqref{eq:def_sigma_t} that
	\[
	\sigma_{t}=\frac{2}{t}\int_{0}^{t/2}\nu_{f(s)}\dd s.
	\]
	Hence the problem can be reduced to the case when $f$ is replaced by
	\[
	g(s)=\sin\left(\frac{\pi s}{2t}\right), \quad s\in[0,t].
	\]
	
	Since $g$ is strictly increasing in $(0,t)$ we may apply Proposition~\ref{prop:dens_sigma_t_f_monot}. Denoting again by $\sigma_{t}$ the average measure corresponding to $|\alpha|=g$ and assuming $\theta\in(0,\pi)$, formula~\eqref{eq:dens_sigma_t_f_incr} yields
	\begin{align*}
		\frac{\dd\sigma_{t}}{\dd\theta}(\ee^{\ii\theta})&=\frac{1}{2\pi t}\int_{0}^{t\theta/\pi}\frac{\sin\left(\theta/2\right)}{\sqrt{\sin^{2}\left(\theta/2\right)-\sin^{2}\left(\pi s/2t\right)}}\dd s \\
		&=\frac{1}{\pi^{2}}\sin\left(\frac{\theta}{2}\right)\int_{0}^{1}\frac{\dd u}{\sqrt{\left(1-u^{2}\right)\left(1-\sin^{2}\left(\theta/2\right)u^{2}\right)}},
	\end{align*}
	where we substituted for $\sin\left(\pi s/2t\right)=u\sin\left(\theta/2\right)$. Using definition~\eqref{eq:def_elliptic_int_K}, we obtain
	\begin{equation}
		\frac{\dd\sigma_{t}}{\dd\theta}(\ee^{\ii\theta})=\frac{\sin\left(\theta/2\right)}{\pi^{2}}\,K\!\left(\sin\left(\frac{\theta}{2}\right)\right),
		\label{eq:dens_f_sine}
	\end{equation}
	for $\theta\in(0,\pi)$. Since $\sigma_{t}$ is symmetric with respect to the real line formula~\eqref{eq:dens_f_sine} extends in the same form to $\theta\in(\pi,2\pi)$. Density~\eqref{eq:dens_f_sine} has a singularity at the middle point $\theta=\pi$ since
	\[
	K(k)=-\log\sqrt{1-k^{2}}+O(1), \quad\mbox{ as } k\to1-,
	\]
	see~\cite[Eq.~(19.12.1)]{dlmf}.
\end{example}

\section*{Acknowledgement}

The research of F.~{\v S}. was supported by the GA{\v C}R grant No. 20-17749X.

\bibliographystyle{acm}
\bibliography{references_varopuc}

\end{document}